\documentclass[10pt]{article}%
\usepackage{amsmath}
\usepackage{amsfonts}
\usepackage{amssymb}
\usepackage{graphicx}%
\usepackage{mathrsfs}
\usepackage[square, comma, sort&compress, numbers]{natbib}
\usepackage{amssymb, color}
\usepackage[body={15.5cm,21cm}, top=3cm]{geometry}%
\setcounter{MaxMatrixCols}{30}
\providecommand{\U}[1]{\protect \rule{.1in}{.1in}}
\DeclareMathOperator*{\sgn}{sgn}
\DeclareMathOperator*{\esssup}{ess\,sup}

\newtheorem{theorem}{Theorem}[section]

\newtheorem{lemma}[theorem]{Lemma}

\newtheorem{remark}[theorem]{{Remark}}

\newenvironment{proof}[1][Proof]{\noindent \textbf{#1.} }{\  \rule{0.5em}{0.5em}}
\begin{document}

\title{Quadratic $G$-BSDEs with convex generators
	and unbounded terminal conditions}
\author{ Ying Hu\thanks{Univ. Rennes, CNRS, IRMAR-UMR6625, F-35000, Rennes, France and School of Mathematical Sciences, Fudan University, Shanghai
		200433, China. ying.hu@univ-rennes1.fr.
		Research   supported
		by Lebesgue Center of Mathematics ``Investissements d'avenir"
		program-ANR-11-LABX-0020-01, by CAESARS-ANR-15-CE05-0024 and by MFG-ANR-16-CE40-0015-01.}
	\and Shanjian Tang \thanks{Department of Finance and Control Sciences, School of Mathematical Sciences, Fudan University, Shanghai
		200433, China. sjtang@fudan.edu.cn. Research supported by the National Natural Science Foundation of China (Nos. 11631004 and 12031009).}
	\and Falei Wang\thanks{Zhongtai Securities Institute for Financial  Studies, Shandong University, Jinan 205100, China.
		flwang2011@gmail.com. Research supported by   the National Natural Science Foundation of China (Nos. 12031009 and 11601282) and the Young Scholars Program of Shandong University.}}
\date{}
\maketitle
\begin{abstract}
In this paper, we first study one-dimensional quadratic  backward stochastic differential equations driven by $G$-Brownian motions ($G$-BSDEs) with unbounded terminal values. With the help of a $\theta$-method of Briand and Hu~\cite{BH2008} and nonlinear stochastic analysis techniques, we propose an approximation procedure to  prove existence and uniqueness result when the generator is convex (or concave) and terminal value is of exponential moments of arbitrary order. Finally, we also establish the well-posedness of multi-dimensional $G$-BSDEs with diagonally quadratic generators.
\end{abstract}

\textbf{Key words}: quadratic $G$-BSDEs, unbounded terminal value, convex generator

\textbf{MSC-classification}: 60H10, 60H30

\section{Introduction}
The present paper is devoted to the study of backward stochastic differential equations (BSDEs) on a $G$-expectation space, which was initiated by Peng \cite{P07a,P08a} motivated by mathematical finance problems with Knightian uncertainty. More precisely, we will investigate the case with  quadratic convex generators  and  unbounded terminal conditions.

The nonlinear BSDE was firstly introduced by Pardoux and Peng~\cite{PP} on  Wiener space $(\Omega, \mathscr{F},P_0)$  with
the natural filtration $(\mathscr{F}_t )_{t\in[0,T ]}$.
The solution of BSDE consists of  a pair  of progressively measurable processes $(Y,Z)$ such that
\begin{align}\label{myq00}
Y_{t}=\xi+\int_{t}^{T}f(s,Y_{s},Z_{s})ds -\int_{t}^{T}Z_{s}dW_{s}, \ \forall t\in[0,T],
\end{align}	
in which $W$ is a standard Brownian motion, and the generator $f$ is a progressively measurable function and the terminal condition $\xi$ is an $\mathscr{F}_T$-measurable
random variable.
 Pardoux and Peng~\cite{PP}  established the existence and uniqueness of solutions to BSDE \eqref{myq00} via a contraction mapping approach when   $f$ is uniformly Lipschitz continuous in both unknowns  and $\xi$ is square integrable.
 Since then, great progress has been made in the field of BSDEs, as it has rich connections with partial differential equations, stochastic control and  mathematical finance (cf. El Karoui et al. \cite{EKP1}). In particular, an extensive study has been given to BSDEs with generators having a quadratic growth in the 2nd unknown $z$ due to their  several financial  motivations, such as utility maximization problems and financial market equilibrium problems  (cf. Hu et al. \cite{HI1}).

For a one-dimensional quadratic BSDE, the monotone convergence method is a  successful strategy to build a solution. Kobylanski \cite{K1}  firstly established the existence and uniqueness theorem through the monotone convergence method and PDE-based approximation technique when the terminal condition is bounded.
Subsequently, Briand and Hu \cite{BH2006} extended the existence result to the case of unbounded terminal conditions. Indeed, they developed a useful a priori estimate on $Y$, which allows to
apply a monotone approximation technique when the terminal condition has exponential moments. However, the uniqueness of unbounded solutions to  quadratic BSDE  is not trivial. In \cite{BH2008}, Briand and Hu formulated a $\theta$-method to obtain the uniqueness when the generator is convex (or concave) with respect to the 2nd unknown  $z$. On the other hand, several efforts have been made towards proposing new methods for the research of quadratic BSDEs.
When the terminal value is bounded, with the help of BMO martingale theory, Tevzadze \cite{Te} obtained the existence and uniqueness result using  a Picard iteration, and Briand and Elie \cite{BH1} gave a distinct approximation procedure  to derive the solvability based on Malliavin calculus.

It is worth mentioning that  the result of Tevzadze  \cite{Te}  still works for multi-dimensional quadratic BSDE with small enough terminal conditions. However, multi-dimensional quadratic BSDE (even with bounded  terminal conditions) may not have  a solution (see Frei and Dos Reis \cite{FR} for such a counterexample). Then, some  structure conditions on the generator are introduced in order to guarantee that the system of quadratic BSDEs with bounded terminal values has a unique solution. For example, Hu and Tang \cite{HT} investigated BSDEs of diagonally quadratic generators (see also \cite{CN,L}).  Recently, by utilizing a $\theta$-method and   iterative technique, Fan et al. \cite{FHT2} established the solvability of system of diagonally quadratic BSDEs with the terminal values of exponential moments of arbitrary order. For more research on this topic, we refer the reader to \cite{BE0,FH,FHT1,HR2016,M,XZ2016} and the references therein.

In this paper, our probabilistic setup is  the  $G$-expectation  space $(\Omega, L^1_G(\Omega),\mathbb{\hat{E}}[\cdot],(\mathbb{\hat{E}}[\cdot])_{t\in[0,T]})$, under which the canonical process $B$ is called $G$-Brownian motion. The $G$-expectation  is a time-consistent sublinear expectation, and we could establish the corresponding stochastic calculus theory with respect to $G$-Brownian motion, such as  $G$-It\^{o}'s formula, $G$-stochastic differential equation and so on. Indeed, the $G$-expectation could be represented by an upper expectation over  a weakly compact subset of mutually singular martingale measures (cf. Denis et al. \cite{DHP11}).

Due to the nonlinear structure, the quadratic process $\langle B\rangle$ is no longer a deterministic process, which results in the main difficulty compared to the linear case. For instance, there is a kind of non-increasing and continuous $G$-martingales $K$. Thus, a typical BSDE driven by $G$-Brownian motion ($G$-BSDE) is given by
\begin{align}\label{Finite time G-BSDE}
Y_{t}=\xi+\int_{t}^{T}f(s,Y_{s},Z_{s})ds+\int_{t}^{T}g(s,Y_{s},Z_{s})d\langle B\rangle_{s} -\int_{t}^{T}Z_{s}dB_{s}-(K_{T}-K_{t}).
\end{align}	
However, the Picard iteration involving the term $Z$  was found difficult to be applied  to $G$-BSDE due to the presence of $K$. Then, Hu et al. \cite{HJPS} turned to a combined PDE and Galerkin approximation
approach to obtain the well-posedness of  Lipschitz $G$-BSDE \eqref{Finite time G-BSDE} when the terminal value has a finite moment of order  $p>1$. On the other hand, the application of monotone convergence theorem is restricted under $G$-expectation framework (see Lemma \ref{downward convergence proposition}).  So, Hu et al. \cite{HL} adapted the approximation approach of \cite{HJPS} to quadratic $G$-BSDE based on $G$-BMO martingale theory, and derived the existence and uniqueness result when the terminal value is bounded.

 A notion quite related  to $G$-BSDE is the second order BSDE (2BSDE)  proposed  by Soner et al. \cite{STZ1, STZ2}.
 By applying  quasi-surely analysis and aggregations approach,
  Soner et al. \cite{STZ1} established the existence and uniqueness of solutions to 2BSDE  with Lipschitz generators. Possama\"{i} and Zhou \cite{PZ} obtained the solvability of  2BSDE  with quadratic generators and bounded terminal conditions.
We would like to mention that the setting of 2BSDE is more general than that of $G$-BSDE, whereas the solution of $G$-BSDE has more regularity, see \cite{HJPS1,HW, Lin, PTZ,S2} and the references therein for more research on this field.

This paper aims  to fill the gap between boundedness and
existence of exponential moments of the data for solution of quadratic $G$-BSDEs.  We have to develop an alternative approximation approach, which is different from  existing monotone approximation and  Picard
approximation. The key point is how to  estimate the difference of two solutions, say $Y$ and $\bar{Y}$.
Contrary to the case of bounded terminal values,  the 2nd unknown  $Z$ may be  unbounded in the BMO space and the conventional linearization technique  fails to work  in our context. Inspired by the arguments of \cite{BH2008} and \cite{FHT2}, we will  develop a $\theta$-method to  our quadratic   $G$-BSDE under the further  assumption of  either convexity  or concavity on the generator, i.e.,
we estimate $Y-\theta\bar{Y}$ for each $\theta\in(0,1)$, which allows to take advantage of the convexity of the generator.

In order to carry out the purpose, we firstly establish a priori estimate on exponential moments of the term $Y$ of $G$-BSDE \eqref{Finite time G-BSDE} as in \cite{BH2006} or \cite{FHT1}. Unlike the quadratic BSDE case, some delicate and technical computations are developed to deal with the new term $K$ through nonlinear stochastic analysis theory, which generalizes the counterpart of \cite{HJPS} (see Lemma \ref{myq10}). Next,  using the decreasing property of the new term $K$ inspired by \cite{CT,PZ}, we give a priori estimate on the term $Z$, which involves exponential moments of the term $Y$.  Then, with the help of a $\theta$-method, we could develop an approximation procedure through a sequence of quadratic $G$-BSDEs
with bounded terminal condition. Indeed, we prove existence and
uniqueness of the global solution to  quadratic $G$-BSDE with the
terminal value of exponential moments of arbitrary order.
Finally, we  consider the solvability of systems of diagonally quadratic $G$-BSDEs with unbounded terminal values, and give some  extension of \cite{Liu}'s result to our quadratic case.

The rest of the paper is organized as follows. In section 2, we present some basic results on $G$-expectation. Section 3 is devoted to solution of quadratic $G$-BSDE with  unbounded terminal conditions. In section 4, we discuss a multi-dimensional case.

\section{The $G$-expectation setup}
In this paper,  we denote by $\left\langle\cdot,\cdot\right\rangle$ and $|\cdot|$ the scalar product and associated norm of a Euclidian space, respectively. Fix a constant $T>0$. Let $\Omega=C_{0}^{d}([0,T])$ be the space of all $\mathbb{R}^{d}%
$-valued continuous functions $\omega$ starting from the origin on $[0,T],$
and $B_t(\omega):=\omega_t$ be the canonical process,
equipped with the uniform norm, i.e.,
$
\|\omega\|:=\sup_{t\in[0,T]}|\omega_{t}|.
$
We set $\Omega_{t}:=\{\omega_{\cdot\wedge t}: \omega\in \Omega\}$ and denote by  $\mathcal{B}(\Omega) \ (resp. \ \mathcal{B}(\Omega_t))$ the Borel $\sigma$-algebra of $\Omega \ (resp. \ \Omega_{t})$ for each $t\in[0,T]$.
We introduce the following space of cylinder functions as
 a counterpart of cylinder sets in the linear case:
\begin{align*}
L_{ip}(\Omega_t):=\{ \varphi(B_{t_{1}},\ldots,B_{t_{k}}):k\in \mathbb{N},\ t_{1}<\cdots<t_{k}\in \lbrack0,t], \ \varphi \in C_{b.Lip}(\mathbb{R}
^{k\times d })\},
\end{align*}
and  $L_{ip}(\Omega):=L_{ip}(\Omega_T),$
where $C_{b.Lip}(\mathbb{R}^{k\times d})$ denotes the space of all
bounded and Lipschitz functions on $\mathbb{R}^{k\times d}$.

Given a monotonic and sublinear function $G:\mathbb{S}(d)\rightarrow \mathbb{R}$, where $\mathbb{S}(d)$ denotes the space of all $d\times d$ symmetric matrices.
 Peng \cite{P07a, P08a} initiated the $G$-expectation $\mathbb{\hat{E}%
}[\cdot]: L_{ip}(\Omega)\rightarrow\mathbb{R}$ satisfying
that
$\mathbb{\hat{E}}[\varphi(B_t)]=u(t,0)$, where $u(t,x)$ is the viscosity solution to the following fully nonlinear PDE with initial condition $u(0,x)=\varphi(x)\in C_{b.Lip}(\mathbb{R}^d)$:
\[
\partial_tu(t,x)-G(\partial^2_{xx}u(t,x))=0, \ \forall (t,x)\in (0,T)\times\mathbb{R}^d.
\]
Moreover, he introduced the conditional $G$-expectation $\mathbb{\hat{E}}_{t}[\cdot]: L_{ip}(\Omega)\rightarrow L_{ip}(\Omega_t)$ for each $t\in[0,T]$.
Let $L_{G}^{p}(\Omega)$ (resp. $L_{G}^{p}(\Omega_t)$) be the completion of $L_{ip}(\Omega)$ (resp. $L_{ip}(\Omega_t)$) under the norm $\mathbb{\hat{E}}[|\cdot|^{p}]^{1/p}$ for each
$p\geq1$. The canonical process $B_t=(B^i_t)_{i=1}^d$ is called a $d$-dimensional $G$-Brownian motion on the $G$-expectation space $(\Omega, L^1_G(\Omega),\mathbb{\hat{E}}[\cdot],(\mathbb{\hat{E}}[\cdot])_{t\in[0,T]})$. For each $1\leq i, j \leq d$,  denote by $\langle B^i, B^j\rangle$   the mutual variation process. Indeed, the $G$-expectation could be regarded as an upper expectation.
\begin{theorem}[\cite{DHP11,HP09}]
	\label{the2.7}  There exists a weakly compact set
	$\mathcal{P}$ of probability
	measures on $(\Omega,\mathcal{B}(\Omega))$, such that
	\[
	\mathbb{\hat{E}}[X]=\sup\limits_{P\in\mathcal{P}}E^{P}[X]\ \ \text{for any $ X\in  {L}_{G}^{1}{(\Omega)}$.}
	\]
 $\mathcal{P}$ is called a set that represents $\mathbb{\hat{E}}$.
\end{theorem}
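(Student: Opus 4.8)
The plan is to establish the representation in four stages: obtain an abstract representation of $\mathbb{\hat{E}}$ as a supremum of linear expectations on $L_{ip}(\Omega)$, prove a regularity (downward continuity) property of $\mathbb{\hat{E}}$, use it to upgrade each linear expectation to a genuine countably additive probability measure, and finally pass to the weak closure and extend the identity from $L_{ip}(\Omega)$ to $L^1_G(\Omega)$ by density.

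First I would record that, by its very construction through the viscosity solution of the $G$-heat equation together with the comparison principle, $\mathbb{\hat{E}}$ is a sublinear functional on the vector space $L_{ip}(\Omega)$: it is monotone, satisfies $\mathbb{\hat{E}}[c]=c$ for constants, is sub-additive and positively homogeneous. Applying the Hahn--Banach theorem to this sublinear functional produces a family $\mathcal{Q}$ of linear functionals $E:L_{ip}(\Omega)\to\mathbb{R}$ with $E\le\mathbb{\hat{E}}$ and $\mathbb{\hat{E}}[X]=\sup_{E\in\mathcal{Q}}E[X]$ for every $X\in L_{ip}(\Omega)$. Monotonicity of $\mathbb{\hat{E}}$ forces each $E$ to be positive, and testing against $\pm 1$ gives $E[1]=1$, so each $E$ is a linear expectation; the only thing missing is that it be represented by a $\sigma$-additive measure.

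The hard part will be the regularity estimate, which is what distinguishes the $G$-setting from a purely algebraic representation. Writing $C_b(\Omega)$ for the bounded continuous functions on $(\Omega,\|\cdot\|)$, the claim is that $\mathbb{\hat{E}}[X_n]\downarrow 0$ whenever $X_n\in C_b(\Omega)$ and $X_n\downarrow 0$ pointwise. I would deduce this from tightness of the underlying laws: the moment bounds $\mathbb{\hat{E}}[|B_t-B_s|^{2m}]\le C_m|t-s|^m$ for $G$-Brownian motion, together with the Kolmogorov--Chentsov criterion, yield for each $\varepsilon>0$ a compact set $K_\varepsilon\subset\Omega$ with $\mathbb{\hat{E}}[(\mathrm{dist}(\cdot,K_\varepsilon)\wedge 1)]<\varepsilon$; combining this with Dini's theorem on $K_\varepsilon$ gives the downward continuity. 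Since $0\le E\le\mathbb{\hat{E}}$ for each $E\in\mathcal{Q}$, the same property is inherited by $E$, so the Daniell--Stone theorem represents each $E$ by a countably additive probability measure $P_E$ on $(\Omega,\mathcal{B}(\Omega))$ with $E[X]=E^{P_E}[X]$ on $L_{ip}(\Omega)\subset C_b(\Omega)$.

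Finally I would set $\mathcal{P}$ to be the weak closure of $\{P_E:E\in\mathcal{Q}\}$. The same tightness estimate and Prokhorov's theorem make this family relatively weakly compact, hence weakly compact after taking the closure; and the inequality $E^P\le\mathbb{\hat{E}}$ on $C_b(\Omega)$ passes to weak limits, so no measure in the closure exceeds $\mathbb{\hat{E}}$ while the supremum can only increase, giving $\mathbb{\hat{E}}[X]=\sup_{P\in\mathcal{P}}E^P[X]$ on $L_{ip}(\Omega)$. To extend to $L^1_G(\Omega)$, I would note that both sides are $\mathbb{\hat{E}}[|\cdot|]$-Lipschitz --- for the right-hand side because $|\sup_P E^P[X]-\sup_P E^P[Y]|\le\sup_P E^P[|X-Y|]\le\mathbb{\hat{E}}[|X-Y|]$ --- and they agree on the dense subspace $L_{ip}(\Omega)$, hence coincide on the completion $L^1_G(\Omega)$. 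The crux of the whole argument is the tightness/regularity step of the third paragraph; the rest is standard functional analysis once that estimate is in hand.
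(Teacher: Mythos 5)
Your overall architecture (sublinear functional $\to$ Hahn--Banach $\to$ regularity $\to$ Daniell--Stone $\to$ Prokhorov $\to$ density) is the right abstract skeleton, and indeed it is the framework used in \cite{DHP11} for \emph{general} sublinear expectations satisfying a downward-continuity property. (Note the paper itself imports this theorem from \cite{DHP11,HP09} without proof, so the comparison is against those sources.) However, the step you yourself identify as the crux --- proving the regularity $\mathbb{\hat{E}}[X_n]\downarrow 0$ for $X_n\downarrow 0$ via ``Kolmogorov--Chentsov under $\mathbb{\hat{E}}$'' --- contains a genuine circularity, and it is precisely the step that the cited proofs handle in a completely different way.

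Concretely, two things go wrong. First, at this stage $\mathbb{\hat{E}}$ is defined only on $L_{ip}(\Omega)$ and its $\mathbb{\hat{E}}[|\cdot|]$-completion; a functional such as $\mathrm{dist}(\cdot,K_\varepsilon)\wedge 1$, or any modulus-of-continuity functional of the whole path, is not a cylinder function, and the fact that bounded continuous path functionals belong to (the relevant completion containing) $L^1_G(\Omega)$ is itself a \emph{consequence} of the representation theorem in \cite{DHP11}, not an input to it. Second, the Kolmogorov--Chentsov argument is intrinsically a countably additive one: the chaining/Borel--Cantelli step requires countable subadditivity (equivalently, monotone convergence along cylinder approximations), and for a sublinear expectation this is exactly the regularity property you are trying to establish --- the Hahn--Banach functionals you start from are merely finitely additive, so nothing countably additive is yet available to run the criterion under. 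The moment bounds $\mathbb{\hat{E}}[|B_t-B_s|^{2m}]\le C_m|t-s|^m$ are fine (they concern cylinder functions), but they cannot be upgraded to a tightness statement about compact subsets of path space by finite subadditivity alone. The proofs in \cite{DHP11,HP09} avoid this by reversing the order of quantifiers: they first construct a family of genuinely $\sigma$-additive measures, namely the laws $P_\theta$ of stochastic integrals $\int_0^\cdot \theta_s\,dW_s$ under the Wiener measure, with $\theta$ ranging over adapted processes valued in the bounded set associated with $G$; the identity $\mathbb{\hat{E}}[X]=\sup_\theta E^{P_\theta}[X]$ on $L_{ip}(\Omega)$ is proved via the dynamic programming (stochastic control) representation of the $G$-heat equation, and then the \emph{classical} Kolmogorov criterion is applied under each $P_\theta$ with constants uniform in $\theta$, yielding tightness, Prokhorov compactness of the weak closure, and only \emph{afterwards} the downward continuity of $\mathbb{\hat{E}}$ on $C_b(\Omega)$. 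So the missing idea in your proposal is this concrete control-theoretic construction of the measures; without it, the tightness paragraph does not close.
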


\begin{remark}
{\upshape Denote by $\mathcal{P}_{max}$ the collection of all  probability measures $P$ on $(\Omega,\mathcal{B}(\Omega))$
	 such that	$E^{P}[X]\leq \mathbb{\hat{E}}[X]$ for any $X\in {L}_{ip}(\Omega)$. Then, $\mathcal{P}_{max}$ is a weakly compact set that represents $\mathbb{\hat{E}}$.
In what follows, we always assume that $\mathcal{P}=\mathcal{P}_{max}$ to give the representation of conditional $G$-expectation. Note that the capacities induced by different weakly compact representation sets coincide with each other, see \cite{HP13}.
 }
\end{remark}

Then the  $G$-expectation $\hat{\mathbb{E}}$ could be generalized in the following way:
\[\hat{\mathbb{E}}[X]=\sup_{P\in\mathcal{P}}E^P[X] \ \text{for each $\mathcal{B}(\Omega)$-measurable function $X$}.\]	
For this $\mathcal{P}$, we define capacity%
\[
c(A):=\sup_{P\in\mathcal{P}}P(A),\ A\in\mathcal{B}(\Omega).
\]
A set $A\in\mathcal{B}(\Omega)$ is polar if $c(A)=0$.  A
property holds $``quasi$-$surely"$ (q.s.) if it holds outside a
polar set. In what follows, we do not distinguish between two random
variables $X$ and $Y$ if $X=Y$ q.s.

Due to the nonlinear structure of $G$-expectation space, we have the following  nonlinear  monotone convergence theorem, which is much more complicated.
\begin{lemma}[\cite{DHP11}]\label{downward convergence proposition} Let $X_n$, $n\geq 1$   be a sequence of  $\mathcal{B}(\Omega)$-measurable random variables.
\begin{description}
	\item[(i)] 	Suppose  $X_n\geq 0$. Then,
	$\mathbb{\hat{E}}[\liminf\limits_{n\rightarrow\infty}X_n]\leq \liminf\limits_{n\rightarrow\infty}\mathbb{\hat{E}}[X_n].$
	\item[(ii)] 		Suppose $X_n \in L^1_G(\Omega)$ are non-increasing. Then,
	$\mathbb{\hat{E}}[X_n]\downarrow\mathbb{\hat{E}}[\lim\limits_{n\rightarrow\infty}X_n].$
	\end{description}
\end{lemma}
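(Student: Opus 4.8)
The plan is to reduce both statements to classical measure-theoretic convergence theorems applied to each individual $P\in\mathcal{P}$, exploiting the representation $\mathbb{\hat{E}}[X]=\sup_{P\in\mathcal{P}}E^{P}[X]$ of Theorem~\ref{the2.7} (extended to all $\mathcal{B}(\Omega)$-measurable $X$) together with the weak compactness of $\mathcal{P}$. The general principle is that inequalities survive the supremum over $P$ effortlessly, whereas the reverse direction in a \emph{decreasing} limit fails measure-by-measure and must be recovered through a compactness argument.

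For (i), I would argue $P$ by $P$. Fix $P\in\mathcal{P}$. Since $X_n\geq0$, the classical Fatou lemma gives $E^{P}[\liminf_{n}X_n]\leq\liminf_{n}E^{P}[X_n]$. As $E^{P}[X_n]\leq\mathbb{\hat{E}}[X_n]$ for every $n$, taking the liminf preserves this bound and yields $E^{P}[\liminf_{n}X_n]\leq\liminf_{n}\mathbb{\hat{E}}[X_n]$. Taking the supremum over $P\in\mathcal{P}$ on the left and using the representation for $\liminf_{n}X_n$ then gives the claim. No compactness is needed here; the fact that one obtains an inequality rather than equality is exactly what passes through the sublinear sup.

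For (ii), one inequality is free: writing $X:=\lim_{n}X_n$, monotonicity of $\mathbb{\hat{E}}$ makes $\mathbb{\hat{E}}[X_n]$ non-increasing and bounded below by $\mathbb{\hat{E}}[X]$, so $\mathbb{\hat{E}}[X_n]\downarrow c$ for some $c\geq\mathbb{\hat{E}}[X]$, and the whole content is the reverse bound $c\leq\mathbb{\hat{E}}[X]$. I would first reduce to \emph{bounded continuous} $X_n$: using that each $X_n\in L^1_G(\Omega)$ is quasi-continuous with uniformly vanishing tails $\mathbb{\hat{E}}[|X_n|\mathbf{1}_{\{|X_n|>N\}}]\to0$, I truncate and smooth so that the comparison reduces to a decreasing sequence of bounded continuous functionals on $\Omega$. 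Then for each $n$ I pick a near-maximizer $P_n\in\mathcal{P}$ with $E^{P_n}[X_n]\geq\mathbb{\hat{E}}[X_n]-\tfrac1n$; by weak compactness a subsequence satisfies $P_{n_k}\rightharpoonup P^{*}\in\mathcal{P}$. For each fixed $m$ and all $n_k\geq m$ one has $X_{n_k}\leq X_m$, hence $c\leq\liminf_{k}E^{P_{n_k}}[X_{n_k}]\leq\liminf_{k}E^{P_{n_k}}[X_m]=E^{P^{*}}[X_m]$, where the last equality is weak continuity of $P\mapsto E^{P}[X_m]$ for the bounded continuous $X_m$. Letting $m\to\infty$ and invoking ordinary monotone (or dominated) convergence under the single measure $P^{*}$ gives $c\leq E^{P^{*}}[X]\leq\mathbb{\hat{E}}[X]$, as required.

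The main obstacle is precisely this reverse bound in (ii): downward monotone convergence is simply false for a general, non-weakly-compact family of measures, so the proof must extract a limiting measure $P^{*}$ and then run a one-measure convergence theorem there — this is the ``much more complicated'' feature flagged before the statement. Two points deserve care: verifying that $P_{n_k}\rightharpoonup P^{*}$ indeed stays inside $\mathcal{P}$ (this is exactly the role of weak compactness), and controlling the approximation of $L^1_G$ elements by bounded continuous ones so that both the non-increasing structure and the tail estimates are preserved uniformly in $n$.
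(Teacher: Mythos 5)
The paper itself offers no proof of this lemma --- it is quoted from \cite{DHP11} --- so your attempt can only be measured against the standard argument there. Your part (i) is correct and is exactly that standard argument: classical Fatou under each fixed $P$, the bound $E^P[X_n]\leq\mathbb{\hat{E}}[X_n]$, and the representation $\mathbb{\hat{E}}[\,\cdot\,]=\sup_{P\in\mathcal{P}}E^P[\,\cdot\,]$ applied to $\liminf_n X_n$. Your part (ii) also has the correct skeleton, and it is the same mechanism as in \cite{DHP11}: near-maximizers $P_n$, weak compactness of $\mathcal{P}$ producing $P_{n_k}\rightharpoonup P^*\in\mathcal{P}$, the fixed-$m$ comparison $E^{P_{n_k}}[X_{n_k}]\leq E^{P_{n_k}}[X_m]$, and finally classical monotone convergence under the single measure $P^*$.

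There is, however, a genuine flaw in your reduction step. The claim of \emph{uniformly} vanishing tails, $\sup_n\mathbb{\hat{E}}[|X_n|\mathbf{1}_{\{|X_n|>N\}}]\to 0$ as $N\to\infty$, is false for a general non-increasing sequence in $L^1_G(\Omega)$: take $d=1$, $G$ genuinely nonlinear, $\xi:=\langle B\rangle_T-\tilde{\sigma}^{-2}T\geq 0$, so that $\mathbb{\hat{E}}[\xi]>0$ while $\inf_{P\in\mathcal{P}}E^P[\xi]=0$, and set $X_n:=-n\xi$. Then $X_n\in L^1_G(\Omega)$ is non-increasing with $\mathbb{\hat{E}}[X_n]=0$, but for every fixed $N$ one has $\mathbb{\hat{E}}[|X_n|\mathbf{1}_{\{|X_n|>N\}}]=n\,\mathbb{\hat{E}}[\xi\mathbf{1}_{\{\xi>N/n\}}]\to\infty$, so no two-sided truncation can be made uniform in $n$, and the proposed reduction of the whole sequence to bounded functions collapses (a second, smaller, issue: replacing quasi-continuous functions by continuous ones off a small open set need not preserve the non-increasing structure; this is fixable by taking running minima of the extensions). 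The repair is to notice that no uniformity in $n$ is needed at all: in your chain $c\leq\liminf_k E^{P_{n_k}}[X_{n_k}]\leq\liminf_k E^{P_{n_k}}[X_m]$ the index $m$ is \emph{fixed}, so all you need is weak continuity of $P\mapsto E^P[X_m]$ for the single element $X_m\in L^1_G(\Omega)$, which follows from quasi-continuity of $X_m$, its own (per-$m$, non-uniform) tail condition $\lim_N\mathbb{\hat{E}}[|X_m|\mathbf{1}_{\{|X_m|>N\}}]=0$, and a Tietze extension off an open set of small capacity; this is precisely Lemma~29-type continuity in \cite{DHP11}. Alternatively, truncate only from above, since the error $\mathbb{\hat{E}}[(X_n-N)^+]\leq\mathbb{\hat{E}}[(X_1-N)^+]$ is controlled by $X_1$ alone and hence is uniform in $n$ for free, while $X\wedge N\leq X$ keeps the final inequality $\mathbb{\hat{E}}[X\wedge N]\leq\mathbb{\hat{E}}[X]$ in the right direction.
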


Next, we introduce some useful spaces of stochastic processes, which will be used frequently in  stochastic calculus theory with respect to $G$-Brownian motion.  Set
 \begin{align*}
&M_{G}^{0}(0,T):=\bigg\{\eta_{\cdot}=\sum_{i=0}^{n-1}\xi_{t_i}\mathbf{1}_{[t_{i},t_{i+1})}(\cdot): 0=t_0<t_1<\cdots<t_n=T, \ \  \xi_{t_i}\in L_{ip}(\Omega_{t_{i}})\bigg\};\\
&S_{G}^{0}(0,T):=\{h(t,B_{t_{1}\wedge t},\cdot\cdot\cdot,B_{t_{n}\wedge
	t}): 0<t_{1}<t_2<\ldots<t_{n}<T, h\in C_{b.Lip}(\mathbb{R}^{1+n\times d})\};\\
&M_G^p(0,T):=\text{the completion of $M_G^0(0,T)$ under the norm $\mathbb{\hat{E}}\bigg[\int_0^{T}|\cdot|^pdt\bigg]^{\frac{1}{p}}$},\ \forall p\geq 1;\\
&H_G^p(0,T):=\text{the completion of $M_G^0(0,T)$ under the norm $\mathbb{\hat{E}}\bigg[\bigg(\int_0^{T}|\cdot|^2dt\bigg)^{\frac{p}{2}}\bigg]^{\frac{1}{p}}$},\ \forall p\geq 1;\\
&S_G^p(0,T):=\text{the completion of $S_G^0(0,T)$ under the norm $
\mathbb{\hat{E}}\big[\sup_{t\in [0,T]}|\eta_{t}|^{p}\big]^{\frac{1}{p}}$},\ \forall p\geq 1.
\end{align*}
Then, for two processes  $ \eta\in M_{G}^{1}(0,T)$ and $ \xi\in H_{G}^{1}(0,T)$,
the $G$-It\^{o} integrals  $\int\eta_sd\langle B^i,B^j\rangle_s$  and $\int \xi_sdB^i_s$ could be constructed by a standard approximation method.
Denote by $M_G^p(0,T; \mathbb{R}^d)$ the $\mathbb{R}^d$-valued process  such that each component  belongs to $M_G^p(0,T)$. Similarly, we can define
$H_G^p(0,T;\mathbb{R}^d)$, $S^p_G(0,T;\mathbb{R}^d)$ and $L_G^p(\Omega; \mathbb{R}^d)$.  For the sake of convenience, set
 $$\int_{0}^{T}\eta_{s}d\langle B\rangle_s:=\sum_{i,j=1}^{d}\int_0^T\eta^{ij}_{s}d\langle B^i, B^j\rangle_s\  \text{and} \ \int_0^T\xi_sdB_s:=\sum_{i=1}^d\int_0^T\xi^i_sdB^i_s $$ for each $\eta \in M_G^1(0,T;\mathbb{R}^{d\times d})$ and $\xi\in H_G^1(0,T; \mathbb{R}^d)$.

For any $p\ge 1$, we denote by $\mathcal{E}_{G}^p(\mathbb{R}^d)$ the collection
of all stochastic processes $Y$ such that $e^Y\in S_{G}^p(0,T;\mathbb{R}^d)$,   $\mathcal{H}_{G}^p(\mathbb{R}^d)$ the collection
of all stochastic processes $Z\in
H_{G}^p(0,T;\mathbb{R}^{d})$, and $\mathcal{L}_G^p(\mathbb{R}^d)$ the collection
of all stochastic processes $K$ such that $K$ is a non-increasing $G$-martingale with $K_{0}=0$ and $K_{T}\in L_{G}^p(\Omega;\mathbb{R}^d)$. We write $Y\in \mathcal{E}_{G}(\mathbb{R}^d)$ if $Y\in \mathcal{E}_{G}^p(\mathbb{R}^d)$ for any $p\geq 1$. Similarly, we define $\mathcal{H}_{G}(\mathbb{R}^d)$ and $\mathcal{L}_{G}(\mathbb{R}^d)$.

In the rest of this paper, we always assume
that $G$ is non-degenerate to ensure the solvability of $G$-BSDEs, i.e., there exist two constants
$0<\tilde{\sigma}^{-1}\leq\bar{\sigma}<\infty$ such that
\begin{equation*}
\frac{1}{2\tilde{\sigma}^2}\mathrm{tr}[A-B]\leq G(A)-G(B)\leq \frac{1}{2}\bar{\sigma}^{2}\mathrm{tr}[A-B] \ \ \text{ for all }A\geq B.
\end{equation*}
Then it follows from Corollary 3.5.8 of Peng \cite{P10} that
\begin{equation}\label{Non-degenerated condition of G}
\tilde{\sigma}^{-2} I_{d}\leq [d\langle B^i,B^j\rangle_t]_{i,j=1}^d\leq \bar{\sigma}^{2} I_{d}.
\end{equation}
It is easy to verify that $\int \xi_sdB_s$ is a $P$-martingale for each $P\in\mathcal{P}$. Then we have the following BDG inequality.

\begin{lemma}
	\label{myw90156} Assume that $\xi \in {M}^{2}_G(0,T;\mathbb{R}^d)$.  Then, for each $n\geq 1$, there is a constant $A(n)$ depending only on $n$ and $\bar{\sigma}^2$ so that have%
	\begin{align}	\label{myw901}
\mathbb{\hat{E}}\bigg[\sup_{t\in
		\lbrack0,T]}\bigg|\int_{0}^{t}\xi_{s}dB_{s}\bigg|^{n}\bigg]\leq A(n)
	\mathbb{\hat{E}}\bigg[\bigg(\int_{0}^{T}|\xi_{s}|^{2}ds\bigg)^{
\frac{n}{2}}\bigg].
	\end{align}
\end{lemma}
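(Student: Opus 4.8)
The plan is to reduce the nonlinear $G$-BDG inequality \eqref{myw901} to the classical (linear) Burkholder--Davis--Gundy inequality applied under each $P\in\mathcal{P}$, and then to recover $\mathbb{\hat{E}}$ by taking the supremum over $\mathcal{P}$, using the upper-expectation representation $\mathbb{\hat{E}}[\cdot]=\sup_{P\in\mathcal{P}}E^P[\cdot]$ of Theorem \ref{the2.7}, in its extension to all $\mathcal{B}(\Omega)$-measurable functionals. A preliminary observation is that $\sup_{t\in[0,T]}|\int_0^t\xi_s dB_s|$ is $\mathcal{B}(\Omega)$-measurable (by continuity of paths it is a countable supremum over rationals), so the extended representation does apply to the left-hand side.

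First I would fix an arbitrary $P\in\mathcal{P}$ and set $M_t:=\int_0^t\xi_s dB_s$. The key structural input, recalled just before the statement, is that $M$ is a continuous $P$-martingale; more precisely, the $G$-stochastic integral coincides $P$-a.s. with the classical It\^o integral of $\xi$ against the $P$-martingale $B$, and its quadratic variation is $\langle M\rangle_t=\sum_{i,j=1}^d\int_0^t\xi^i_s\xi^j_s\,d\langle B^i,B^j\rangle_s$. This identification is legitimate because $\xi\in M^2_G(0,T;\mathbb{R}^d)$ forces $E^P[\int_0^T|\xi_s|^2 ds]<\infty$ for every $P$, so $M$ is a genuine square-integrable $P$-martingale. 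For full rigor I would first check the identity on elementary integrands $\xi\in M^0_G(0,T)$, where both sides are explicit Riemann--Stieltjes-type sums, and then pass to the $M^2_G$-limit, since by construction the $G$-integral is the $M^2_G$-limit of integrals of elementary processes.

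The main (and essentially only substantive) step is the uniform pathwise control of $\langle M\rangle_T$. By the non-degeneracy hypothesis on $G$, the density matrix of the mutual variation satisfies $[d\langle B^i,B^j\rangle_s/ds]_{i,j=1}^d\leq\bar{\sigma}^2 I_d$ quasi-surely, by \eqref{Non-degenerated condition of G}. Denoting this density by $a_s$, the integrand defining $\langle M\rangle$ is $\langle\xi_s,a_s\xi_s\rangle\leq\bar{\sigma}^2|\xi_s|^2$, so that $\langle M\rangle_T\leq\bar{\sigma}^2\int_0^T|\xi_s|^2 ds$, $P$-a.s., with a constant \emph{independent of} $P$. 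Applying the classical BDG inequality under $P$, with its universal constant $c_n$ depending only on $n$, gives $E^P[\sup_{t\in[0,T]}|M_t|^n]\leq c_n E^P[\langle M\rangle_T^{n/2}]\leq c_n\bar{\sigma}^n E^P[(\int_0^T|\xi_s|^2 ds)^{n/2}]$.

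Finally, since the chain of inequalities holds for every $P\in\mathcal{P}$ with one and the same constant $A(n):=c_n\bar{\sigma}^n$, I take the supremum over $P\in\mathcal{P}$ on both sides and invoke the representation to rewrite $\sup_P E^P[\cdot]$ as $\mathbb{\hat{E}}[\cdot]$, which yields \eqref{myw901}. I expect the only genuine obstacle to be the measure-by-measure identification of the $G$-integral with the It\^o integral and the consistency of the quadratic variation formula across all $P\in\mathcal{P}$; once this is in place, the remainder is a uniform application of classical BDG together with the deterministic bound \eqref{Non-degenerated condition of G}, and the dependence of $A(n)$ on $n$ and $\bar{\sigma}^2$ only is transparent.
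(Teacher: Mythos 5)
Your proposal is correct and is precisely the argument the paper intends: the paper introduces the lemma with the one-line observation that $\int\xi_s\,dB_s$ is a $P$-martingale for each $P\in\mathcal{P}$, and the implicit proof is exactly your reduction — identify the $G$-integral with the classical It\^o integral under each $P$, bound $\langle M\rangle_T\leq\bar{\sigma}^2\int_0^T|\xi_s|^2\,ds$ via \eqref{Non-degenerated condition of G}, apply the classical BDG inequality with a universal constant $c_n$, and take the supremum over $\mathcal{P}$ using the (extended) representation of $\mathbb{\hat{E}}$. Your additional care about the measure-by-measure identification of the integral and the measurability of the supremum fills in details the paper leaves tacit, but does not change the route.
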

We have Doob's maximal inequality for $G$-martingale as follows.  See \cite{P10, STZ, Song11} for details.
\begin{lemma}\label{myq7}
	Suppose $1<\alpha<\beta$. Then for each $1<p<\bar{p}:=\beta/\alpha$ with $p\leq2$ and for all $\eta\in L_G^{\beta}(\Omega_T)$, there exists a constant $C>0$ depending only on $p$, $\bar{\sigma}$ and $\tilde{\sigma}$ such that
	\[
	\mathbb{\hat{E}}\bigg[\sup_{t\in [0,T]}\mathbb{\hat{E}}_t[|X|^{\alpha}]\bigg] \leq \frac{Cp\bar{p}}{(\bar{p}-p)(p-1)}\bigg(\mathbb{\hat{E}}[|X|^{\beta}]^{\frac{1}{p\bar{p}}} +\mathbb{\hat{E}}[|X|^{\beta}]^{\frac{1}{p}}\bigg).
	\]
\end{lemma}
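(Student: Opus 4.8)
The plan is to recognize the process $M_t:=\mathbb{\hat{E}}_t[|X|^{\alpha}]$ as a nonnegative $G$-martingale and then to control its running maximum by the Doob-type maximal inequality for $G$-martingales, which is the content distilled from \cite{P10, STZ, Song11}. Writing $\xi:=|X|^{\alpha}\ge 0$, the hypothesis $X\in L_G^{\beta}(\Omega_T)$ gives $\xi\in L_G^{\bar p}(\Omega_T)$ with $\bar p=\beta/\alpha$ and $\mathbb{\hat{E}}[\xi^{\bar p}]=\mathbb{\hat{E}}[|X|^{\beta}]<\infty$, while the tower property of the conditional $G$-expectation yields $\mathbb{\hat{E}}_s[M_t]=M_s$ for $s\le t$, so $M$ is indeed a $G$-martingale. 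First I would reduce the claim to the fundamental $L^p$-maximal estimate: for such an $M$ and any $1<p<\bar p$ with $p\le 2$,
\[
\mathbb{\hat{E}}\Big[\sup_{t\in[0,T]}M_t^{\,p}\Big]\le C_{p,\bar p}\Big(\mathbb{\hat{E}}[\xi^{\bar p}]^{1/\bar p}+\mathbb{\hat{E}}[\xi^{\bar p}]\Big),
\]
where $C_{p,\bar p}$ degenerates like $\frac{p\bar p}{(\bar p-p)(p-1)}$ and depends otherwise only on $\bar\sigma,\tilde\sigma$.

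The passage from this estimate to the stated inequality is routine. Since $\sup_t M_t\ge 0$ and $\mathbb{\hat{E}}[1]=1$, Hölder's inequality for the sublinear expectation gives $\mathbb{\hat{E}}[\sup_t M_t]\le \mathbb{\hat{E}}[\sup_t M_t^{\,p}]^{1/p}$; taking the $p$-th root of the fundamental estimate and using the elementary bound $(a+b)^{1/p}\le a^{1/p}+b^{1/p}$ with $a=\mathbb{\hat{E}}[\xi^{\bar p}]^{1/\bar p}$, $b=\mathbb{\hat{E}}[\xi^{\bar p}]$ produces exactly the two terms $\mathbb{\hat{E}}[|X|^{\beta}]^{1/(p\bar p)}$ and $\mathbb{\hat{E}}[|X|^{\beta}]^{1/p}$, with the constant $\frac{Cp\bar p}{(\bar p-p)(p-1)}$ appearing after relabeling. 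So the whole statement rests on the fundamental maximal estimate, which is where the nonlinearity genuinely bites.

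The substantive point, and the one I expect to be the main obstacle, is precisely this fundamental estimate, because $M$ cannot be handled by Doob's inequality under a single measure: the representing family $\mathcal{P}=\mathcal{P}_{max}$ consists of mutually singular measures, so the maximal function must be controlled at the level of the capacity $c(\cdot)=\sup_{P\in\mathcal{P}}P(\cdot)$. The route I would follow is that of \cite{Song11}: establish a weak-type bound for the $G$-evaluation, of the form $\lambda\, c\big(\sup_t M_t>\lambda\big)\le C\,\mathbb{\hat{E}}[\xi]$, exploiting the non-degeneracy \eqref{Non-degenerated condition of G} to control the quadratic variation uniformly over $\mathcal{P}$, and then upgrade it to the strong $L^p$ bound by a layer-cake integration in $\lambda$. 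It is in this upgrade that the strict gap $\bar p-p>0$ between the integrability of $\xi$ and the exponent of the maximal function is essential; this explains both the hypothesis $p<\bar p$ and the blow-up of the factor $\tfrac{\bar p}{\bar p-p}$ as $p\uparrow\bar p$, while the Doob factor $\tfrac{1}{p-1}$ has the same origin as in the linear theory and the restriction $p\le 2$ stems from the $G$-It\^o estimates underlying the weak-type bound. One must also check that $M$ admits a version with enough path regularity for $\sup_t M_t$ to be measurable; both this and the weak-to-strong passage are already carried out in \cite{P10, STZ, Song11}, so in the write-up I would quote the fundamental estimate as a cited black box and keep the proof itself to the short reduction of the first two paragraphs.
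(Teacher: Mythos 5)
Your proposal is correct and is in substance the same as the paper's treatment: the paper gives no proof of this lemma at all, stating it as a known Doob-type maximal inequality with a bare citation to \cite{P10, STZ, Song11}, and you likewise quote the fundamental $L^p$-maximal estimate from those references as a black box. Your reduction to the stated form — the $G$-martingale property of $M_t=\mathbb{\hat{E}}_t[|X|^{\alpha}]$ via the tower property, H\"older's inequality for the sublinear expectation, and the subadditivity bound $(a+b)^{1/p}\leq a^{1/p}+b^{1/p}$ — is sound, so nothing further is needed.
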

\begin{remark}\label{myq7}{\upshape
Suppose $e^{X}\in L^2_G(\Omega)$. Then, there exists a constant $\hat{A}(G)$ depending only on  $\bar{\sigma}$ and $\tilde{\sigma}$ such that (taking $\alpha=2,\beta=4$)	\[
\mathbb{\hat{E}}\bigg[\sup_{t\in [0,T]}\mathbb{\hat{E}}_t[e^{X}]\bigg]=\mathbb{\hat{E}}\bigg[\sup_{t\in [0,T]}\mathbb{\hat{E}}_t\bigg[\big(e^{\frac{X}{2}}\big)^2\bigg]\bigg]\leq \hat{A}(G)\mathbb{\hat{E}}[e^{2X}].
\]
}
\end{remark}

\section{One-dimensional quadratic $G$-BSDEs}
In this section, we shall study the well-posedness of solutions to the following scalar-valued quadratic $G$-BSDEs with unbounded terminal values:
\begin{align}\label{myq1}
Y_{t}=\xi+\int_{t}^{T}f(s,\omega_{\cdot\wedge s},Y_{s},Z_{s})ds-\int_{t}^{T}Z_{s}dB_{s}-(K_{T}-K_{t}), \ \forall t\in[0,T].
\end{align}
Throughout the paper, we always fix three positive constants $\lambda$, $\gamma$, $\kappa$, and two nonnegative stochastic processes  $\alpha_t, \beta_t\in M_G^1(0,T)$. Consider the following  assumptions on  the terminal condition and  generator.
\begin{description}
	\item[(H1)] For each  $(t,\omega)\in[0,T]\times\Omega$ and $(y,z),(\bar{y},\bar{z})\in \mathbb{R}\times \mathbb{R}^d$,
	\[
|f(t,\omega,0,0)|\leq \alpha_t(\omega)\ \text{and}\	|f(t,\omega,y,z)-f(t,\omega,\bar{y},\bar{z})|\leq \lambda|y-\bar{y}|+\gamma(1+|z|+|\bar{z}|)|z-\bar{z}|.
	\]
\item[(H2)] There exists a modulus of continuity $w:[0,\infty)\rightarrow[0,\infty)$  such that for each $(y,z)\in \mathbb{R}\times \mathbb{R}^d$ and $(t,\omega), (\bar{t},\bar{\omega})
\in [0,T]\times \Omega$,
	\[
|f(t,\omega,y,z)-f(\bar{t},\bar{\omega},y,z)|\leq w(|t-\bar{t}|+\|\omega-\bar{\omega}\|).
	\]
\item[(H3)]  For each $(t,\omega,y)\in[0,T]\times\Omega\times\mathbb{R}$,
$f(t,\omega,y,\cdot)$ is  either convex or concave.
	
\item[(H4)] Both the terminal value $\xi\in L^1_G(\Omega)$ and  $\int^T_0\alpha_tdt$ have  exponential moments of arbitrary order, i.e.,
\[
\mathbb{\hat{E}}\bigg[\exp\bigg\{p|\xi|+p\int^T_0\alpha_tdt\bigg\}\bigg]<\infty\quad \text{for any $p\geq 1$.}
\]
\end{description}

\begin{remark}
	{\upshape	
Assumptions (H1) and (H2) are also used in~\cite{HL} to study quadratic  $G$-BSDEs when the terminal condition and $f(s,0,0)$ are bounded.
Assumption (H3) allows us to use  a $\theta$-method of  \cite{BH2008} to establish the convergence of our approximating sequences of quadratic $G$-BSDEs. Assumption (H4) relaxes the existing bounded condition on the terminal value and generator.
}
\end{remark}

\begin{remark}
	{\upshape
		Just for  convenience of exposition, the type of quadratic $G$-BSDEs \eqref{myq1} are considered. Our results can be proved to be true for more general quadratic $G$-BSDEs, by slightly finer estimates:
		\begin{align*}
		Y_{t}=\xi+\int_{t}^{T}f(s,\omega_{\cdot\wedge s},Y_{s},Z_{s})ds+\sum\limits_{i,j=1}^d\int_{t}^{T}g_{ij}(s,\omega_{\cdot\wedge s},Y_{s},Z_{s})d\langle B^i, B^i\rangle _{s}-\int_{t}^{T}Z_{s}dB_s-(K_{T}-K_{t}).
		\end{align*}
In this case, we call $g(t,\omega,y,\cdot)=[g_{ij}(t,\omega,y,\cdot)]_{i,j=1}^d$ is   convex (resp. concave ) in $z$, if for any $\theta\in (0,1)$,
\[
g(t,\omega,y,\theta z+(1-\theta) \bar{z})- \theta  g(t,\omega,y,z)-(1-\theta)g(t,\omega,y,\bar{z}) \ \text{is non-positive  (resp. non-negative) definite.}
\]	
}	
\end{remark}

Due to the presence of non-increasing $G$-martingale $K$,
it is difficult to apply the monotone convergence argument
or fixed point method to the study of quadratic $G$-BSDEs.  In what follows, we  will combine nonlinear stochastic analysis technique and $\theta$-method to deal with the question.  Firstly, we need to give some useful lemmas.  We  introduce some general conditions on the generator.
\begin{description}
\item[(H5)]For each  $(t,\omega,y, z)\in[0,T]\times\Omega\times\mathbb{R}\times\mathbb{R}^d$,
\[
|f(t,\omega,y,z)|\leq \beta_t(\omega)+\lambda|y|+\frac{\kappa}{2}|z|^2.
\]
\item[(H6)]For each  $(t,\omega,y, z)\in[0,T]\times\Omega\times\mathbb{R}\times\mathbb{R}^d$,
\[
f(t,\omega,y,z)\mathbf{1}_{\{y>0\}}\leq \beta_t(\omega)+\lambda|y|+\frac{\kappa}{2}|z|^2.
\]
\end{description}
Note that (H1) implies that (H5). Indeed, it follows from Cauchy inequality that
\begin{align}\label{myq771}
|f(t,\omega,y,z)|\leq |f(t,\omega,0,0)|+\lambda|y|+{\gamma}\bigg(|z|^2+\frac{|z|^2+1}{2}\bigg) \leq \alpha_t(\omega)+\frac{\gamma}{2}+\lambda|y|+\frac{3\gamma}{2}|z|^2.
\end{align}

\begin{lemma}\label{myq156} Let $X\in S^1_G(0,T)$ be a stochastic process. Suppose that \[
	\mathbb{\hat{E}}\bigg[\exp\bigg((1+\varepsilon)\sup\limits_{t\in[0,T]}|X_t|\bigg)\bigg]<\infty
	\] for some constant $\varepsilon >0$.
	Then, $\exp(X_t)\in  S^1_G(0,T)$.
\end{lemma}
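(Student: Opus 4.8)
The plan is to prove membership in $S^1_G(0,T)$ by exhibiting $\exp(X_\cdot)$ as the $S^1_G$-limit of a sequence drawn from (the closure-generating class of) $S^0_G(0,T)$, which is precisely what membership in the completion requires. First I would recall that $X \in S^1_G(0,T)$ means $X$ is the limit in the $S^1_G$-norm $\mathbb{\hat{E}}[\sup_{t}|\cdot|]$ of a sequence $X^n \in S^0_G(0,T)$, each of the explicit cylinder form $h^n(t, B_{t_1\wedge t}, \ldots, B_{t_{k_n}\wedge t})$ with $h^n \in C_{b.Lip}$. The candidate approximating sequence is then $\exp(X^n)$; but since the exponential is not globally Lipschitz, I would first truncate, replacing $X^n$ by $X^{n,m} := (-m)\vee (X^n \wedge m)$, so that on the bounded range $[-m,m]$ the map $x\mapsto e^x$ is Lipschitz with constant $e^m$, keeping each $\exp(X^{n,m})$ inside $S^0_G(0,T)$ (composition of $C_{b.Lip}$ functions with the cylinder structure stays in the class). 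The whole argument is a diagonal-extraction of $n \to \infty$ and $m\to\infty$ together with a uniform-integrability control supplied by the hypothesis $\mathbb{\hat{E}}[\exp((1+\varepsilon)\sup_t|X_t|)] < \infty$.

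The key estimate is the elementary inequality
\begin{align*}
|e^{a} - e^{b}| \leq (e^{|a|} + e^{|b|})\,|a-b|,
\end{align*}
which I would use to bound $\sup_t |e^{X_t} - e^{X^{n}_t}|$ by $\sup_t\big[(e^{|X_t|}+e^{|X^n_t|})\,|X_t - X^n_t|\big]$. Applying the Cauchy--Schwarz inequality under the sublinear expectation $\mathbb{\hat{E}}$ (valid as it is monotone and subadditive), this splits into the product of $\mathbb{\hat{E}}[\sup_t(e^{|X_t|}+e^{|X^n_t|})^2]^{1/2}$ and $\mathbb{\hat{E}}[\sup_t|X_t - X^n_t|^2]^{1/2}$. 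The first factor must be shown bounded uniformly in $n$: this is where the extra exponential integrability of order $(1+\varepsilon)$ enters, by controlling the squared exponential $e^{2|X_t|}$ through the margin $\varepsilon$ (one can also combine this with Remark on Doob's maximal inequality to pass between $\sup_t$ inside and outside). The second factor tends to $0$ along the approximating sequence, after upgrading $L^1$-convergence of $\sup_t|X_t-X^n_t|$ to $L^2$-convergence by interpolating against the uniform exponential bound — again using the $\varepsilon$-room.

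The main obstacle, and the step deserving the most care, is the passage from the $S^1_G$-convergence that is merely given, $\mathbb{\hat{E}}[\sup_t|X_t-X^n_t|]\to 0$, to the control needed for the exponential, namely the uniform integrability of $\sup_t e^{2|X^n_t|}$ and the $L^2$-convergence of $\sup_t|X_t - X^n_t|$. The difficulty is that $S^1_G$-convergence does not automatically transfer exponential moments to the approximants $X^n$, since the $X^n$ are only guaranteed to be close to $X$ in the weak $L^1$-type norm. I would circumvent this by the truncation layer: rather than controlling $e^{2|X^n|}$ directly, I work with the bounded truncations $X^{n,m}$ (for which $e^{X^{n,m}}$ is trivially in $S^0_G$), and I split the error into a truncation error $\sup_t|e^{X_t} - e^{X^{m}_t}|$ with $X^m := (-m)\vee(X\wedge m)$ and an approximation error $\sup_t|e^{X^m_t}-e^{X^{n,m}_t}|$. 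The truncation error is handled purely on $X$ via the given hypothesis: on $\{\sup_t|X_t|>m\}$ the integrand is dominated by $2e^{\sup_t|X_t|}$, whose integral against $\mathbb{\hat{E}}$ over the exceptional set vanishes as $m\to\infty$ precisely because $\mathbb{\hat{E}}[e^{(1+\varepsilon)\sup_t|X_t|}]<\infty$ gives uniform integrability of $e^{\sup_t|X_t|}$ (use the capacity $c$ and Chebyshev to bound $c(\sup_t|X_t|>m)$). With $m$ fixed, the approximation error is then genuinely Lipschitz-controlled with constant $e^m$ and converges in $n$ by the given $S^1_G$-convergence. A final diagonal choice $m=m(n)\to\infty$ slowly enough completes the construction of an $S^0_G$-sequence converging to $e^{X_\cdot}$ in $S^1_G$-norm, which is the claim.
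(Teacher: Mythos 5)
Your final argument --- truncate at level $m$, use the $e^{m}$-Lipschitz bound for the exponential on the bounded range, and kill the truncation tail with the $(1+\varepsilon)$-exponential moment of $\sup_t|X_t|$ --- is correct and is essentially the paper's own proof, which truncates $X^{(n)}=(X\wedge n)\vee(-n)$, observes $\exp(X^{(n)}_t)\in S^1_G(0,T)$, and bounds
$\mathbb{\hat{E}}\big[\sup_{t\in[0,T]}|\exp(X_t)-\exp(X^{(n)}_t)|\big]\leq \tfrac{2}{n\varepsilon^2}\,\mathbb{\hat{E}}\big[\exp\big((1+\varepsilon)\sup_{t\in[0,T]}|X_t|\big)\big]\to 0$.
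The Cauchy--Schwarz/uniform-integrability route sketched in your second paragraph would indeed fail (exponential moments do not transfer to the $S^0_G$-approximants, as you yourself note), but your truncation fix supersedes it, and your additional descent to $S^0_G$ cylinder functions with the diagonal choice $m=m(n)$ merely unwinds, by hand, the paper's one-line claim that the exponential of the truncated process already lies in $S^1_G(0,T)$.
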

\begin{proof}
	Denote by $X^{(n)}=(X\wedge n)\vee (-n)$ for each $n\geq 1$. Then, it is easy to check that $\exp(X^{(n)}_t)\in  S^1_G(0,T)$.
	By Taylor's expansion, we have that
	\begin{align*}
	\mathbb{\hat{E}}\bigg[\sup\limits_{t\in[0,T]}\big|\exp(X_t)-\exp(X^{(n)}_t)\big|\bigg]\leq	\mathbb{\hat{E}}\bigg[\sup\limits_{t\in[0,T]}\exp(|X_t|)|X_t|\mathbf{1}_{\{|X_t|\geq n\}}\bigg]\leq \frac{2}{n\varepsilon^2} \mathbb{\hat{E}}\bigg[\sup\limits_{t\in[0,T]}\exp\left((1+\varepsilon)|X_t|\right)\bigg],
	\end{align*}
	which ends the proof by sending $n\rightarrow \infty$.
\end{proof}

Then, we have  the following a priori estimates for quadratic $G$-BSDEs, which is crucial for our subsequent discussions.

\begin{lemma}\label{myq2}
Assume that $(Y,Z,K)\in S^2_G(\mathbb{R})\times\mathcal{H}_G^2(\mathbb{R}^d)\times \mathcal{L}_G^2(\mathbb{R})$  satisfies the following equation
\begin{align*}
Y_{t}=\xi+(\bar{K}_T-\bar{K}_t)+\int_{t}^{T}f(s,Y_{s},Z_{s})ds
-\int_{t}^{T}Z_{s}dB_{s}-(K_{T}-K_{t}),
\end{align*}
where $\bar{K}\in S^2_G(0,T)$ is a non-increasing $G$-martingale. Suppose that there are two constants $p\geq 1$ and $\varepsilon>0$ such that
\begin{align}
\label{myq6}
\mathbb{\hat{E}}\bigg[\exp\bigg\{{(2p+\varepsilon)\kappa\tilde{\sigma}^2}e^{\lambda T}\sup\limits_{t\in[0,T]}|Y_t|+{(2p+\varepsilon)\kappa\tilde{\sigma}^2}\int^T_0\beta_te^{\lambda t}dt\bigg\}\bigg]<\infty.
\end{align}
Then, we have
	\begin{description}
		\item[(i)] Let Assumption (H5) hold. Then, for each $t\in[0,T]$,
	 \begin{align*}&\exp\left\{{p\kappa\tilde{\sigma}^2}e^{\lambda t}|Y_t|\right\}\leq   \mathbb{\hat{E}}_t\bigg[\exp\bigg\{{p\kappa\tilde{\sigma}^2}e^{\lambda T}|\xi|+{p\kappa\tilde{\sigma}^2}\int^T_t\beta_se^{\lambda s}ds\bigg\}	 \bigg].
		\end{align*}
		\item[(ii)] Let Assumption (H6) hold. Then, for each $t\in[0,T]$,
 \begin{align*}
		\exp\left\{{p\kappa\tilde{\sigma}^2}e^{\lambda t}Y_t^+\right\}\leq   \mathbb{\hat{E}}_t\bigg[\exp\bigg\{{p\kappa\tilde{\sigma}^2}e^{\lambda T}\xi^++{p\kappa\tilde{\sigma}^2}\int^T_t\beta_se^{\lambda s}ds\bigg\}
		\bigg].
		\end{align*}	
	\end{description}
\end{lemma}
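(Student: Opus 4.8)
The plan is to linearize the quadratic growth by an exponential change of variable and to reduce both assertions to a $G$-submartingale property. Set $c:=p\kappa\tilde{\sigma}^2$ and, for part (i), consider
\[
M_t:=\exp\Big(c\,e^{\lambda t}|Y_t|+c\int_0^t\beta_s e^{\lambda s}\,ds\Big).
\]
Once I show that $M$ is a $G$-submartingale, i.e. $M_t\le \mathbb{\hat{E}}_t[M_T]$, the stated inequality follows at once by dividing by the adapted factor $\exp(c\int_0^t\beta_s e^{\lambda s}ds)$ and using $M_T=\exp(c e^{\lambda T}|\xi|+c\int_0^T\beta_s e^{\lambda s}ds)$. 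For part (ii) I replace $|Y_t|,|\xi|$ by $Y_t^+,\xi^+$ and argue identically, using the one-sided bound (H6) only on $\{Y>0\}$.

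First I would compute, via $G$-It\^o's formula applied to $\Phi(t,y)=\exp(c e^{\lambda t}y)$ on the region $\{Y>0\}$, the dynamics of $M$. From the equation one has $dY_t=-f(t,Y_t,Z_t)\,dt+Z_t\,dB_t+dK_t-d\bar{K}_t$, and since $K,\bar K$ are continuous and of bounded variation, $d\langle Y\rangle_t=\sum_{i,j}Z^i_tZ^j_t\,d\langle B^i,B^j\rangle_t$. The $dt$-part of $d\Phi(t,Y_t)$ equals $\Phi\big[c\lambda e^{\lambda t}Y_t-c e^{\lambda t}f+\tfrac12 c^2 e^{2\lambda t}\tfrac{d\langle Y\rangle_t}{dt}\big]$. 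Bounding $f\le\beta_t+\lambda Y_t+\tfrac{\kappa}{2}|Z_t|^2$ by (H5) cancels the $c\lambda e^{\lambda t}Y_t$ terms, while the non-degeneracy bound \eqref{Non-degenerated condition of G}, which gives $d\langle Y\rangle_t\ge\tilde{\sigma}^{-2}|Z_t|^2\,dt$, lets me absorb the bad term $-\tfrac{c\kappa}{2}e^{\lambda t}|Z_t|^2$ into $\tfrac12 c^2 e^{2\lambda t}\tilde{\sigma}^{-2}|Z_t|^2$: this is precisely where the choice $c=p\kappa\tilde{\sigma}^2$ enters, since $c e^{\lambda t}\tilde{\sigma}^{-2}=p\kappa e^{\lambda t}\ge\kappa$ for $p\ge1$ and $t\ge0$. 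The leftover drift is then bounded below by $-c\beta_t e^{\lambda t}\Phi\,dt$, which is exactly compensated by the differential of the running term $c\int_0^t\beta_s e^{\lambda s}ds$ in $M$. Hence, up to the $dB$- and $dK,d\bar K$-terms, $M$ has non-negative drift.

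The crux is to control the martingale and non-increasing terms when passing to the conditional $G$-expectation. The $dB$-part produces $\int_0^\cdot c e^{\lambda s}M_s Z_s\,dB_s$, a symmetric $G$-martingale whose increment has vanishing conditional $G$-expectation and can thus be pulled out of $\mathbb{\hat{E}}_t$; the integrability assumption \eqref{myq6} (it provides room beyond $L_G^2$, so that $M\in S_G^1$ by Lemma \ref{myq156}) together with the BDG inequality of Lemma \ref{myw90156} guarantees that this is a genuine, not merely local, $G$-martingale. The term coming from $-d\bar K_t$ is non-negative (as $\bar K$ is non-increasing and the integrand is positive) and may simply be discarded in the lower bound. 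The remaining term $\int_0^\cdot H_s\,dK_s$ with $H_s=c e^{\lambda s}M_s\ge0$ is the main obstacle: since $dK_s\le0$ it is unfavorable, and one cannot argue pathwise. I would resolve it by invoking the $G$-specific fact that a stochastic integral of a non-negative integrand against a non-increasing $G$-martingale is again a non-increasing $G$-martingale (this is the step generalizing the estimates of \cite{HJPS}), so that $\mathbb{\hat{E}}_t[\int_t^T H_s\,dK_s]=0$. Combining these yields $\mathbb{\hat{E}}_t[M_T]\ge M_t$.

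Finally I would remove the two technical simplifications. The non-smoothness of $y\mapsto|y|$ (resp. $y\mapsto y^+$) is handled by replacing it with smooth convex increasing approximations $\phi_\delta$, for which the additional second-order contributions are non-negative (hence preserve the submartingale inequality) and vanish as $\delta\to0$; the passage to the limit is justified by \eqref{myq6} and Lemma \ref{myq156}. For part (ii) only the region $\{Y>0\}$ is relevant, so (H6) suffices and only the $\int H\,dK$ term appears, the $\bar K$-contribution again being discarded. I expect the $\int H\,dK$ term---that is, establishing the $G$-martingale property of integrals against the non-increasing $G$-martingale $K$---to be the genuinely delicate, $G$-specific part of the argument, the rest running parallel to the classical Briand--Hu estimate.
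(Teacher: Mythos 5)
Your overall route is the paper's: the same exponential transform, the same drift cancellation forced by the choice $c=p\kappa\tilde{\sigma}^2$ together with \eqref{Non-degenerated condition of G}, integrability of the $dB$-integral via \eqref{myq6} and Lemma \ref{myq156}, and a reduction to showing that certain integrals against non-increasing $G$-martingales have vanishing conditional $G$-expectation. The gaps are in that last, genuinely $G$-specific step. First, in part (i) the $\bar K$-term cannot ``simply be discarded'': its integrand $c e^{\lambda s}M_s\phi_\delta'(Y_s)$ is positive only on $\{Y_s>0\}$, and on $\{Y_s<0\}$ the roles of $K$ and $\bar K$ swap, so that $\bar K$ becomes the unfavorable term and $K$ the favorable one. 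What must be controlled is the combination $\int_t^T\eta_s^+\,dK_s+\int_t^T\eta_s^-\,d\bar K_s$ with $\eta_s=c e^{\lambda s}M_s\phi_\delta'(Y_s)$, and this does not follow from treating the two integrals separately: under a sublinear expectation the sum of two non-increasing $G$-martingales need not be a $G$-martingale, because the scenario that nearly attains $\esssup_{\bar P\in\mathcal{P}(t,P)}E^{\bar P}_t$ for one integral can be very unfavorable for the other. One needs the two-martingale form of \cite[Lemma 3.4]{HJPS}, which exploits that $\eta^+$ and $\eta^-$ have disjoint supports; this is exactly what the paper's Lemma \ref{myq10} packages.

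Second, your cited ``$G$-specific fact'' is not available for general non-negative integrands: \cite[Lemma 3.4]{HJPS} requires integrands of $S_G^2$-type regularity, and this requirement is the whole reason for the paper's architecture (It\^o--Tanaka with $\sgn(0)=0$ produces the irregular indicators $\mathbf{1}_{\{Y_s>0\}}$, $\mathbf{1}_{\{Y_s<0\}}$, which the paper approximates monotonically from below by continuous $\varphi_n^{\pm}$ and then handles through the extended conditional expectation, Lemmas \ref{myq10} and \ref{myq9}; see also the remark on $\widetilde{\sgn}$ following the proof of the lemma). Your mollification route keeps the integrands continuous, so it genuinely could bypass Lemma \ref{myq10} --- a real simplification --- but then your drift computation no longer closes as claimed: with a smooth convex $\phi_\delta$, the bad term carries $\frac{\kappa}{2}|\phi_\delta'(Y_t)|\,|Z_t|^2$ while the good quadratic-variation term carries $(\phi_\delta'(Y_t))^2$, and for $p=1$ and $t$ near $0$ (the case used later, e.g.\ in Lemma \ref{compare theorem}) the inequality $p\kappa e^{\lambda t}(\phi_\delta')^2\ge\kappa|\phi_\delta'|$ fails wherever $|\phi_\delta'|<1$. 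The rescue is precisely the $\phi_\delta''$-term, which does not ``vanish as $\delta\to0$'' (it converges to the local time) and must be kept quantitatively --- for instance taking $\phi_\delta$ quadratic on $[-\delta,\delta]$, equal to $|y|$ outside, with $\delta\le\tilde{\sigma}^{-2}/\kappa$. With these two repairs your argument closes, essentially as a smoothed variant of the paper's proof.
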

\begin{proof}
The proof is based on the idea of \cite{FHT1} and nonlinear stochastic analysis technique.

{\bf 1. Proof of Assertion (i). }
From the representation theorem for $G$-expectation (Theorem \ref{the2.7}), we see that $(Y,Z)$ could be regarded as the solution
to the following classical BSDE:
\[
Y_{t}=\xi+(\bar{K}_{T}-\bar{K}_{t})-(K_{T}-K_{t})+\int_{t}^{T}f(s,Y_{s},Z_{s})ds-\int_{t}^{T}Z_{s}dB_{s},\ \ \text{$P$-a.s.}
\]
for each $P\in\mathcal{P}$. Then, applying It\^{o}-Tanaka's formula (see  \cite[p. 234]{RY}) to $|Y_t|$,  we have $P$-a.s.
\begin{align*}
d|Y_t|=-\sgn(Y_t)f(t,Y_{t},Z_{t})dt+\sgn(Y_t)Z_tdB_t+\sgn(Y_t)d(K_t-\bar{K}_t)+dL_t,
\end{align*}
where $\sgn(x)= \mathbf{1}_{\{x> 0\}}- \mathbf{1}_{\{x< 0\}}$ and $L$ is
a continuous adapted and increasing process. It follows that $P$-a.s.
\begin{align*}
d\psi(t,|Y_t|)\geq &-\partial_x\psi(t,|Y_t|)\sgn(Y_t)f(t,Y_{t},Z_{t})dt+
\partial_x\psi(t,|Y_t|)\sgn(Y_t)Z_tdB_t\\
&+\partial_x\psi(t,|Y_t|)\sgn(Y_t)d(K_t-\bar{K}_t)+\frac{1}{2}\partial^2_{xx}\psi(t,|Y_t|)Z_tZ_t^{\top}d\langle B\rangle_t+\partial_t\psi(t,|Y_t|)dt,
\end{align*}
where $\psi(t,x)$ is given by
 \[
\psi(t,x)=\exp\bigg\{{p\kappa\tilde{\sigma}^2}e^{\lambda t}x+{p\kappa\tilde{\sigma}^2}\int^t_0\beta_se^{\lambda s}ds\bigg\}, \ \ (t,x)\in[0,T]\times[0,\infty).
\]
It follows from Assumption (H5) that
\[
-\partial_x\psi(t,|Y_t|)\sgn(Y_t)f(t,Y_{t},Z_{t})\geq -\partial_x\psi(t,|Y_t|)\left(\beta_t+\lambda|Y_t|+\frac{\kappa}{2}|Z_t|^2\right).
\]
Note that $\tilde{\sigma}^{-2} I_{d} \leq d\langle B\rangle_t\leq \bar{\sigma}^2 I_{d}$ according to Inequality \eqref{Non-degenerated condition of G}. In spirit of the following  fact
$$\partial^2_{xx}\psi(t,|Y_t|)\geq 0, $$
we have
\[
\partial^2_{xx}\psi(t,|Y_t|)Z_tZ_t^{\top}d\langle B\rangle_t\geq \tilde{\sigma}^{-2}\partial^2_{xx}\psi(t,|Y_t|)|Z_t|^2dt.
\]
Since $K$ and $\bar{K}$ are  non-increasing and $\partial_x\psi(t,|Y_t|)\geq {p\kappa\tilde{\sigma}^2}$, we derive  $P$-a.s.
\begin{align}\label{myq5}
\begin{split}
&d\psi(t,|Y_t|)\geq \left[\partial_t\psi(t,|Y_t|)-\partial_x\psi(t,|Y_t|)\left( \beta_t+\lambda|Y_t|\right)\right]dt+
\partial_x\psi(t,|Y_t|)\sgn(Y_t)Z_tdB_t\\
&\ \ \ +\frac{1}{2}\left[-\kappa\partial_{x}\psi(t,|Y_t|) +\tilde{\sigma}^{-2}\partial^2_{xx}\psi(t,|Y_t|)\right]|Z_t|^2dt+\partial_x\psi(t,|Y_t|)\left(\mathbf{1}_{\{Y_t>0\}}dK_t+\mathbf{1}_{\{Y_t<0\}}d\bar{K}_t\right)\\
&\  \geq  \frac{1}{2}p(p-1)\kappa^2\tilde{\sigma}^2|Z_t|^2dt+
\partial_x\psi(t,|Y_t|)\sgn(Y_t)Z_tdB_t+\partial_x\psi(t,|Y_t|)\left(\mathbf{1}_{\{Y_t>0\}}dK_t+\mathbf{1}_{\{Y_t<0\}}d\bar{K}_t\right),
\end{split}
\end{align}
where we have used the fact that
\begin{align*}\label{myq3}
\begin{split}
\partial_t\psi(t,x)-\partial_x\psi(t,x)(\lambda x+\beta_t)=0\
\text{and}\ -p\kappa\tilde{\sigma}^2\partial_x\psi(t,x)+
\partial^2_{xx}\psi(t,x)\geq 0.
\end{split}
\end{align*}

In spirit of the condition \eqref{myq6}, we could get that $\partial_x\psi(t,|Y_t|)\in S^2_G(0,T)$ by Lemma \ref{myq156}.  It follows  that for any $P\in\mathcal{P}$, $\int^t_0\partial_x\psi(s,|Y_s|)\sgn(Y_s)Z_sdB_s$ is a $P$-martingale.
 Thus, recalling Equation \eqref{myq5}, we deduce that  $P$-a.s.
{\small\begin{align*}
\psi(t,|Y_t|)+ E^{P}_t\bigg[\int^T_t\partial_x\psi(s,|Y_s|)\mathbf{1}_{\{Y_s>0\}}dK_s+\int^T_t\partial_x\psi(s,|Y_s|)\mathbf{1}_{\{Y_s<0\}}d\bar{K}_s\bigg]\leq E^{P}_t\left[\psi(T,|\xi|)\right],
 \end{align*}}
where $E^{P}_t$ denotes the conditional expectation with respect to $\mathcal{B}(\Omega_t)$.
 Noting  $\psi(T,|\xi|)\in L^1_G(\Omega)$ and
using  Lemma \ref{myq10} and Lemma \ref{myq9} in the Appendix, we conclude that $P$-a.s.
\[
\psi(t,|Y_t|)\leq \esssup\limits_{\bar{P}\in\mathcal{P}(t,P)}E^{\bar{P}}_t[\psi(T,|\xi|)]=\mathbb{\hat{E}}_t\left[\psi(T,|\xi|)\right],
\]
where $\mathcal{P}(t,P)=\{\bar{P}\in\mathcal{P}|\ \bar{P}=P \text{ on $Lip(\Omega_t)$} \}$.
Since $P\in\mathcal{P}$ is arbitrary, we get that \[\psi(t,|Y_t|)\leq \mathbb{\hat{E}}_t\left[\psi(T,|\xi|)\right], \ \text{q.s.}, \] which is the desired result.

{\bf 2. Proof of Assertion (ii).}  Using another type of It\^{o}-Tanaka's formula (see \cite[p. 222]{RY}) to $Y_t^+$, we have  $P$-a.s.
\begin{align*}
dY_t^+=-\mathbf{1}_{\{Y_t>0\}}f(t,Y_{t},Z_{t})dt+\mathbf{1}_{\{Y_t>0\}}Z_tdB_t+\mathbf{1}_{\{Y_t>0\}}d(K_t-\bar{K}_t)+\frac{1}{2}d\widetilde{L}_t,
\end{align*}
where $\widetilde{L}$ is also a continuous adapted and increasing process, and may differ from $L$. Then, in view of \eqref{myq5},
we have that  $P$-a.s.
\begin{align*}
\begin{split}
d\psi(t,Y_t^+) \geq  \frac{1}{2}p(p-1)\kappa^2\tilde{\sigma}^2|Z_t|^2\mathbf{1}_{\{Y_t>0\}}dt+
\partial_x\psi(t,Y_t^+)Z_t\mathbf{1}_{\{Y_t>0\}}dB_t+\partial_x\psi(t,Y_t^+)\mathbf{1}_{\{Y_t>0\}}dK_t.
\end{split}
\end{align*}
Proceeding identically as  to prove Assertion (i),  the proof is complete.
\end{proof}

\begin{remark}
	{\upshape
From It\^{o}-Tanaka's formula  (\cite[p. 222]{RY}), we have $P$-a.s.
	\begin{align*}
	d|Y_t|=-\widetilde{\sgn}(Y_t)f(t,Y_{t},Z_{t})dt+\widetilde{\sgn}(Y_t)Z_tdB_t+\widetilde{\sgn}(Y_t)d(K_t-\bar{K}_t)+d\widetilde{L}_t,
	\end{align*}
	where $\widetilde{\sgn}(x)= \mathbf{1}_{\{x> 0\}}- \mathbf{1}_{\{x\leq 0\}}$. In this case, it is difficult to prove $\int^t_0\widetilde{\sgn}(Y_s)^+dK_s+\widetilde{\sgn}(Y_s)^-d\bar{K}_s$ is a $G$-martingale,  see Lemma \ref{myq10} below. 	
	}
\end{remark}

\begin{lemma}\label{myq10}
Let $(X^i, K^i)$ be in $S^2_G(0,T)\times \mathcal{L}^2_G(\mathbb{R})$, $i=1,2$.
If $X^1>0$, then $P$-a.s.
{\small \[
\esssup\limits_{\bar{P}\in\mathcal{P}(t,P)}E^{\bar{P}}_t\bigg[\int^T_tX^1_s\mathbf{1}_{\{X^2_s>0\}}dK^1_s+\int^T_tX^1_s\mathbf{1}_{\{X^2_s<0\}}d{K}^2_s\bigg]=0
\]	}
for each $P\in\mathcal{P}$.	\end{lemma}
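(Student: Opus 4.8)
The plan is to reduce the assertion to the statement that a certain conditional $G$-expectation vanishes, and then to prove the latter by a backward-induction argument on simple processes that crucially exploits the disjointness of the sets $\{X^2_s>0\}$ and $\{X^2_s<0\}$. Write
\[
R_t:=\int_t^T X^1_s\mathbf{1}_{\{X^2_s>0\}}dK^1_s+\int_t^T X^1_s\mathbf{1}_{\{X^2_s<0\}}d{K}^2_s.
\]
First I would record that $R_t\in L^1_G(\Omega)$: since $K^1,K^2$ are non-increasing, one has for $j=1,2$ that $0\geq\int_t^T X^1_s\mathbf{1}_{\{\cdots\}}dK^j_s\geq (\sup_s X^1_s)(K^j_T-K^j_t)$, and $\sup_s X^1_s\in L^2_G(\Omega)$ together with $K^j_T\in L^2_G(\Omega)$ gives integrability through the Cauchy--Schwarz inequality for $\mathbb{\hat{E}}$. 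By the representation of the conditional $G$-expectation as an essential supremum over $\mathcal{P}(t,P)$ (Lemma \ref{myq9}), the left-hand side of the assertion equals $\mathbb{\hat{E}}_t[R_t]$, so it suffices to prove $\mathbb{\hat{E}}_t[R_t]=0$ q.s. Because $X^1>0$, the two indicators are non-negative and $dK^1,dK^2\leq0$, the integrand is non-positive, whence the easy bound $\mathbb{\hat{E}}_t[R_t]\leq0$; the whole difficulty lies in the reverse inequality.

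For the core identity I would first treat simple integrands. Approximate $X^1$ and, jointly, the sign pattern of $X^2$ by processes constant on a deterministic partition $t=t_0<\cdots<t_n=T$, so that on each interval $[t_i,t_{i+1})$ the integrand reads $\xi_{t_i}^1(K^1_{t_{i+1}}-K^1_{t_i})+\xi_{t_i}^2(K^2_{t_{i+1}}-K^2_{t_i})$ with $\xi_{t_i}^1,\xi_{t_i}^2\in L_{ip}(\Omega_{t_i})$ non-negative and, by construction, of disjoint support ($\xi_{t_i}^1\xi_{t_i}^2=0$). Running the tower property backward from $t_n$, at each step I must evaluate $\mathbb{\hat{E}}_{t_i}[\xi_{t_i}^1(K^1_{t_{i+1}}-K^1_{t_i})+\xi_{t_i}^2(K^2_{t_{i+1}}-K^2_{t_i})]$. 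The $G$-martingale property of $K^j$ gives $\mathbb{\hat{E}}_{t_i}[K^j_{t_{i+1}}-K^j_{t_i}]=0$, and---this is the decisive point---the disjoint support lets me split the conditional $G$-expectation over the $\mathcal{B}(\Omega_{t_i})$-measurable partition $\{\xi^1_{t_i}>0\}$, $\{\xi^2_{t_i}>0\}$, $\{\xi^1_{t_i}=\xi^2_{t_i}=0\}$ by means of $\mathbb{\hat{E}}_{t_i}[\mathbf{1}_A\,\cdot\,]=\mathbf{1}_A\mathbb{\hat{E}}_{t_i}[\,\cdot\,]$, turning the sublinearity of $\mathbb{\hat{E}}_{t_i}$ into an exact equality: on $\{\xi^1_{t_i}>0\}$ only the $K^1$-term survives and contributes $\xi^1_{t_i}\mathbb{\hat{E}}_{t_i}[K^1_{t_{i+1}}-K^1_{t_i}]=0$, and symmetrically on $\{\xi^2_{t_i}>0\}$. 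Hence each conditional step returns exactly $0$, and backward induction yields $\mathbb{\hat{E}}_t[R^{(n)}_t]=0$ for the simple approximations $R^{(n)}$. (Equivalently, this shows that the combined process $\int_0^\cdot\mathbf{1}_{\{X^2>0\}}dK^1+\int_0^\cdot\mathbf{1}_{\{X^2<0\}}dK^2$ is itself a non-increasing $G$-martingale.) It is exactly here that the strict signs $\{X^2>0\}$/$\{X^2<0\}$, rather than $\{X^2\geq0\}$/$\{X^2\leq0\}$, are needed, consistently with the Remark preceding the lemma: without disjointness the sum of the two increments could only be bounded above by $0$, not equated to it.

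Finally I would pass to the limit. Along the approximation $R^{(n)}_t\to R_t$ one controls $\mathbb{\hat{E}}[|R^{(n)}_t-R_t|]$ via bounds of the type $|\int_t^T(\phi^{(n)}_s-\phi_s)dK^j_s|\leq\|\phi^{(n)}-\phi\|\,|K^j_T-K^j_t|$ together with $K^j_T\in L^2_G(\Omega)$, so that continuity of the conditional $G$-expectation in $L^1_G$, namely $\mathbb{\hat{E}}[|\mathbb{\hat{E}}_t[R^{(n)}_t]-\mathbb{\hat{E}}_t[R_t]|]\leq\mathbb{\hat{E}}[|R^{(n)}_t-R_t|]$, forces $\mathbb{\hat{E}}_t[R_t]=\lim_n\mathbb{\hat{E}}_t[R^{(n)}_t]=0$ q.s. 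The main obstacle is precisely this approximation step rather than the backward-induction identity: the integrands involve the discontinuous indicators $\mathbf{1}_{\{X^2_s>0\}}$ and $\mathbf{1}_{\{X^2_s<0\}}$, which cannot be approximated by simple processes uniformly, and the drivers $dK^j$ are singular, so ordinary dominated convergence must be replaced by careful nonlinear estimates. In particular one must ensure that the approximating simple processes retain both non-negativity and disjoint support, and that their mass near $\{X^2_s=0\}$---where the target integrand already vanishes---does not generate a spurious decrease of $K^j$ in the limit; securing this convergence against the singular part $K^j$ is where the real work lies.
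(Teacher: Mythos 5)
There is a genuine gap, and it sits exactly where you yourself locate ``the real work'': your proposal never supplies a workable limit passage from regular integrands to the indicators $\mathbf{1}_{\{X^2_s>0\}}$, $\mathbf{1}_{\{X^2_s<0\}}$. Your reduction step is already flawed: the Cauchy--Schwarz bound only shows $\mathbb{\hat{E}}[|R_t|]<\infty$, i.e.\ $R_t\in\mathbb{L}^1(\Omega)$, not $R_t\in L^1_G(\Omega)$. The space $L^1_G(\Omega)$ is the completion of $L_{ip}(\Omega)$ and its elements are quasi-continuous; indicators of sets such as $\{X^2_s>0\}$ are in general not quasi-continuous, so $R_t$ need not belong to $L^1_G(\Omega)$, and Lemma \ref{myq9} (which requires $X\in L_G^{1^*}(\Omega)$) cannot be invoked for $R_t$ without first exhibiting a decreasing sequence of $L^1_G$ random variables converging to it. Likewise, your final convergence argument requires $\|\phi^{(n)}-\phi\|_\infty\to 0$ against the singular measures $dK^j$, which is impossible when $\phi$ is a discontinuous indicator; and dominated convergence, the classical substitute, is simply unavailable under $\mathbb{\hat{E}}$ (only the one-sided statements of Lemma \ref{downward convergence proposition} survive). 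You candidly flag this obstacle but do not overcome it, so the proof is incomplete precisely at its crux.

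The paper's proof resolves both difficulties with one device that your proposal is missing: a \emph{monotone} approximation of the indicators \emph{from below} by the continuous functions $\varphi_n^{+}\uparrow\mathbf{1}_{\{x>0\}}$ and $\varphi_n^{-}\uparrow\mathbf{1}_{\{x<0\}}$, where $\varphi_n$ is the piecewise-linear truncated sign function. Since $\varphi_n^{\pm}(X^2_t)X^1_t\in S^2_G(0,T)$ are non-negative, the cited result \cite[Lemma 3.4]{HJPS} yields directly that $\int_0^{\cdot}\varphi_n^+(X^2_s)X^1_s\,dK^1_s+\int_0^{\cdot}\varphi_n^-(X^2_s)X^1_s\,dK^2_s$ is a non-increasing $G$-martingale (this replaces your backward-induction-on-partitions sketch, which in any case only treats step integrands). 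The decisive point is then structural: because the integrands increase in $n$ and $dK^j\leq 0$, the random variables $R^{(n)}_t$ \emph{decrease} to $R_t$, which places $R_t$ in $L_G^{1^*}(\Omega)$ by construction; the extended conditional $G$-expectation is defined as $\mathbb{\hat{E}}_t[R_t]=\lim_n\mathbb{\hat{E}}_t[R^{(n)}_t]=0$, and Lemma \ref{myq9} then converts this into the essential-supremum statement. In short, monotonicity of the approximation is not a convenience but the mechanism that substitutes for dominated convergence and that legitimizes every object in the argument; without it, neither your appeal to Lemma \ref{myq9} nor your limit passage can be justified.
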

\begin{proof}
	For each $n\geq 1$, we define
	\begin{align*}
	\varphi_n(x) =\begin{cases}
	 1,\ 	&\text{if}\ x\geq \frac{1}{n};\\
	 nx, 	&\text{if}\ x\in (-\frac{1}{n}, \frac{1}{n});\\
	 -1,\ & \text{if}\ x\leq -\frac{1}{n}.\\
	\end{cases}
	\end{align*}
Then one could easily check that  $\varphi_n(X^2_t)X^1_t\in S^2_G(0,T)$ for each $n\geq 1$.
Thus, from~\cite[Lemma 3.4]{HJPS}, we see that $\int^t_0\varphi_n^+(X^2_s)X^1_sdK^1_s+\int^t_0\varphi_n^-(X^2_s)X^1_sdK^2_s$ is a non-increasing $G$-martingale. Note that $\varphi_n^+(x) \uparrow \mathbf{1}_{\{x> 0\}}$ and $\varphi_n^-(x) \uparrow \mathbf{1}_{\{x< 0\}}$ as $n\rightarrow\infty$. From Lemma \ref{myq9} in the Appendix, we derive that for each $P\in\mathcal{P}$
{\small \begin{align*}
&\esssup\limits_{\bar{P}\in\mathcal{P}(t,P)}E^{\bar{P}}_t\bigg[\int^T_tX^1_s\mathbf{1}_{\{X^2_s>0\}}dK^1_s+\int^T_tX^1_s\mathbf{1}_{\{X^2_s<0\}}dK^2_s\bigg]\\
&=\lim\limits_{n\rightarrow\infty}\mathbb{\hat{E}}_t\bigg[\int^T_t\varphi_n^+(X^2_s)X^1_sdK^1_s+\int^T_t\varphi_n^-(X^2_s)X^1_sdK^2_s\bigg]
=0,\ \text{$P$-a.s.}\end{align*}}
which ends the proof.
\end{proof}

\begin{lemma}\label{myq299}
	Assume  that Assumption (H5) holds and $\int^T_0\beta_tdt$ has exponential moments of arbitrary order. Let $(Y, Z, K)\in  \mathcal{E}_G(\mathbb{R})\times\mathcal{H}_G(\mathbb{R}^d)\times \mathcal{L}_G(\mathbb{R})$ be a solution to  $G$-BSDE \eqref{myq1}.
	Then, for any $n\geq 1$, there exists a constant $\tilde{A}(n)$ depending  on $\lambda,\bar{\sigma},\tilde{\sigma},\kappa, T$ and $n$, such that	
	\[\mathbb{\hat{E}}\bigg[\bigg(\int^T_0|Z_t|^2dt\bigg)^n+|K_T|^n \bigg]\leq \tilde{A}(n)\mathbb{\hat{E}}\bigg[\exp\bigg\{(4\kappa\tilde{\sigma}^2+2\lambda )n \sup\limits_{t\in[0,T]}|Y_t|+2n\int^T_0\beta_tdt\bigg\}\bigg].
	\]
\end{lemma}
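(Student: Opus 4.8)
The plan is to establish the $Z$-estimate first and then read off the $K$-estimate directly from the equation. Since $(Y,Z,K)$ solves \eqref{myq1}, evaluating at $t=0$ gives $K_T=\xi-Y_0+\int_0^Tf(s,Y_s,Z_s)\,ds-\int_0^TZ_s\,dB_s$, so by (H5) together with $|\xi|\vee|Y_0|\le\sup_{t}|Y_t|$ one obtains the pathwise bound $|K_T|\le(2+\lambda T)\sup_t|Y_t|+\int_0^T\beta_s\,ds+\tfrac{\kappa}{2}\int_0^T|Z_s|^2\,ds+|\int_0^TZ_s\,dB_s|$. Raising to the power $n$, taking $\mathbb{\hat{E}}$, and controlling the stochastic integral by the BDG inequality of Lemma~\ref{myw90156}, the quantity $\mathbb{\hat{E}}[|K_T|^n]$ is dominated by $\mathbb{\hat{E}}[(\int_0^T|Z_s|^2ds)^n]$ together with exponential moments of $\sup_t|Y_t|$ and $\int_0^T\beta_s\,ds$, which are finite by hypothesis. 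Hence the whole lemma reduces to estimating $\mathbb{\hat{E}}[(\int_0^T|Z_s|^2ds)^n]$.

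For the $Z$-estimate I would fix $\rho:=2\kappa\tilde{\sigma}^2$ and, working under an arbitrary $P\in\mathcal{P}$ via the representation of Theorem~\ref{the2.7}, apply It\^o's formula to the smooth function $v_t:=\exp(-\rho Y_t)$. Writing $dY_t=-f\,dt+Z_t\,dB_t+dK_t$ and using the lower bound $f\ge-(\beta_t+\lambda|Y_t|+\tfrac{\kappa}{2}|Z_t|^2)$ coming from (H5), the non-degeneracy estimate \eqref{Non-degenerated condition of G} for the second-order term, and---crucially---the monotonicity of $K$, which makes the contribution $-\rho v_t\,dK_t\ge0$ point in the favourable direction, one finds that the choice of $\rho$ renders the coefficient of $|Z_t|^2$ equal to $\kappa^2\tilde{\sigma}^2v_t$. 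Integrating and dropping the two terms of favourable sign yields the q.s. inequality $\kappa^2\tilde{\sigma}^2\int_0^Tv_s|Z_s|^2ds\le v_T+\rho\int_0^Tv_s(\beta_s+\lambda|Y_s|)ds+\rho\int_0^Tv_sZ_s\,dB_s$. Since $e^{-\rho\sup_t|Y_t|}\le v_t\le e^{\rho\sup_t|Y_t|}$, multiplying by $\sup_t v_t^{-1}$ converts $\int v_s|Z_s|^2$ into $\int|Z_s|^2$ at the cost of a factor $e^{\rho\sup|Y|}$, while bounding $v_T$ and $v_s$ on the right contributes a second such factor; this is exactly what produces the exponent $2\rho=4\kappa\tilde{\sigma}^2$ announced in the statement.

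It then remains to take $n$-th powers and $\mathbb{\hat{E}}$. The deterministic terms are handled directly and give the $\exp\{4\kappa\tilde{\sigma}^2 n\sup|Y|+\cdots\}$ contribution, the polynomial prefactors $\rho\lambda T\sup|Y|$ and $\rho\int\beta$ being absorbed into the exponential; this absorption, carried out through Cauchy--Schwarz and Young splittings (or, more cleanly, by inserting an integrating factor $e^{\lambda t}$ as in Lemma~\ref{myq2} to remove the $\lambda|Y|$ drift), is where the precise constants $2\lambda$ and $2\int\beta$ in the exponent emerge. The martingale term is controlled by Lemma~\ref{myw90156}, whose bracket satisfies $\int_0^Tv_s^2|Z_s|^2ds\le(\sup_tv_t)\int_0^Tv_s|Z_s|^2ds$; a Young inequality then lets one absorb a small multiple of $\mathbb{\hat{E}}[(\int_0^T|Z_s|^2ds)^n]$ back into the left-hand side. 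This last step is legitimate precisely because $Z\in\mathcal{H}_G(\mathbb{R}^d)$ already guarantees $\mathbb{\hat{E}}[(\int_0^T|Z_s|^2ds)^n]<\infty$.

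The main obstacle, and the reason an exponential test function is essential, is the treatment of the non-increasing $G$-martingale $K$: one cannot linearize as in the bounded/linear case, and a polynomial choice such as $|Y|^2$ produces a cross term of type $\sup|Y|\cdot|K_T|$ that cannot be absorbed when $Y$ is unbounded. Selecting $v=e^{-\rho Y}$ rather than $e^{+\rho Y}$ is what turns the $K$-contribution into a term of favourable sign, so that $K$ drops out of the $Z$-estimate altogether; the only remaining subtlety is the circular reappearance of $\int|Z|^2$ through the BDG bracket, which is resolved by the Young-type absorption described above.
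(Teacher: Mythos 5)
Your proposal is correct and follows essentially the same route as the paper: the same test function $e^{-2\kappa\tilde{\sigma}^2 Y_t}$ whose sign makes the $dK$ contribution droppable, the non-degeneracy bound \eqref{Non-degenerated condition of G} for the quadratic variation term, BDG plus a Young absorption of the weighted integral $\int_0^T e^{-2\kappa\tilde{\sigma}^2 Y_t}|Z_t|^2\,dt$ (legitimate, as you note, by the a priori finiteness from $Z\in\mathcal{H}_G$), and the Cauchy--Schwarz doubling that produces the exponent $(4\kappa\tilde{\sigma}^2+2\lambda)n$ and $2n\int_0^T\beta_t\,dt$; your $K_T$ bound from the equation with BDG is also exactly the paper's. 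The only cosmetic differences are the order of the two estimates and that you pass from the weighted to the unweighted $Z$-integral pathwise via $\sup_t e^{\rho Y_t}$, whereas the paper does it by H\"older after taking $G$-expectations.
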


\begin{proof}
	Recalling Inequality \eqref{Non-degenerated condition of G} and applying  $G$-It\^{o}'s formula to $e^{-2\kappa \tilde{\sigma}^2Y_t}$ yields that
	\begin{align*}
	&2\kappa^2\tilde{\sigma}^2\int^T_0 e^{-2\kappa \tilde{\sigma}^2Y_t}|Z_t|^2dt\leq 2\kappa^2\tilde{\sigma}^4\int^T_0 e^{-2\kappa \tilde{\sigma}^2Y_t}Z_tZ^{\top}_td\langle B\rangle_t
	\\ \ \ \  &\leq  e^{-2\kappa\tilde{\sigma}^2 \xi} -2\kappa\tilde{\sigma}^2\int^T_0e^{-2\kappa\tilde{\sigma}^2 Y_t}f(t,Y_t,Z_t)dt+2\kappa\tilde{\sigma}^2\int^T_0e^{-2\kappa\tilde{\sigma}^2 Y_t}(Z_tdB_t+dK_t)\\
	\ \ \ & \leq  e^{-2\kappa\tilde{\sigma}^2 \xi}+2\kappa\tilde{\sigma}^2\int^T_0e^{-2\kappa\tilde{\sigma}^2 Y_t}\bigg(\beta_t+\lambda|Y_t|+\frac{\kappa}{2}|Z_t|^2\bigg)dt
	+2\kappa\tilde{\sigma}^2\int^T_0e^{-2\kappa\tilde{\sigma}^2 Y_t}Z_tdB_t,
	\end{align*}
	where we have used the fact that $K$ is a non-increasing process in the last inequality. Thus, in view of the fact that $e^{|x|}\geq |x|$, we could derive that
	\begin{align*}
	\kappa\int^T_0 e^{-2\kappa\tilde{\sigma}^2 Y_t}|Z_t|^2dt
	&\leq  \kappa^{-1}\tilde{\sigma}^{-2}e^{-2\kappa\tilde{\sigma}^2 \xi}+2\int^T_0e^{-2\kappa\tilde{\sigma}^2 Y_t}(\beta_t+\lambda|Y_t|)dt
	+2\int^T_0e^{-2\kappa\tilde{\sigma}^2 Y_t}Z_tdB_t\\
	&\leq 2X+2\int^T_0e^{-2\kappa\tilde{\sigma}^2 Y_t}Z_tdB_t,
	\end{align*}
where $X:=(1+\kappa^{-1}\tilde{\sigma}^{-2}+T)\exp\big\{(2\kappa\tilde{\sigma}^2+\lambda)\sup_{t\in[0,T]}|Y_t|+\int^T_0\beta_tdt\big\}$.
	It follows that
	\begin{align}\label{myq9669}
	\begin{split}
	\kappa^{n}\mathbb{\hat{E}}\bigg[\bigg(\int^T_0 e^{-2\kappa\tilde{\sigma}^2  Y_t}|Z_t|^2dt\bigg)^n\bigg]
	\leq 2^{2n-1}\mathbb{\hat{E}}[X^n]+2^{2n-1}\mathbb{\hat{E}}\bigg[\bigg|\int^T_0e^{-2\kappa\tilde{\sigma}^2 Y_t}Z_tdB_t\bigg|^n\bigg].
	\end{split}
	\end{align}
Applying BDG inequality \eqref{myw901}, for each $n\geq 1$, we can find a constant ${A}(n)$ depending only on $n$ and $\bar{\sigma}^2$ so that
	\begin{align*}
	\begin{split}
	&\mathbb{\hat{E}}\bigg[\bigg|\int^T_0e^{-2\kappa\tilde{\sigma}^2 Y_t}Z_tdB_t\bigg|^n\bigg]
	\leq  {A}(n) \mathbb{\hat{E}}\bigg[\exp\big\{n\kappa\tilde{\sigma}^2 \sup\limits_{t\in[0,T]}|Y_t|\big\}\bigg(\int^T_0e^{-2\kappa\tilde{\sigma}^2 Y_t}|Z_t|^2dt\bigg)^{\frac{n}{2}}\bigg],
	\end{split}
	\end{align*}
which together with the inequality $ab\leq \frac{1}{2\varepsilon} a^2+\frac{1}{2}\varepsilon b^2$ indicates that
	\begin{align} \label{myq991}
	\begin{split}
	&2^{2n-1}\mathbb{\hat{E}}\bigg[\bigg|\int^T_0e^{-2\kappa\tilde{\sigma}^2 Y_t}Z_tdB_t\bigg|^n\bigg]
	\\
	&\leq \frac{2^{4n-3}{A}^2(n)}{\kappa^{n}}\mathbb{\hat{E}}\bigg[\exp\big\{2n\kappa\tilde{\sigma}^2 \sup\limits_{t\in[0,T]}|Y_t|\big\}\bigg]+
	\frac{1}{2}\kappa^{n}\mathbb{\hat{E}}\bigg[\bigg(\int^T_0 e^{-2\kappa\tilde{\sigma}^2  Y_t}|Z_t|^2dt\bigg)^n\bigg].
	\end{split}
	\end{align}
Putting \eqref{myq9669} and \eqref{myq991} together, we could derive that
\begin{align*}
\begin{split}
&\mathbb{\hat{E}}\bigg[\bigg(\int^T_0 e^{-2\kappa\tilde{\sigma}^2  Y_t}|Z_t|^2dt\bigg)^n\bigg]
\leq \tilde{A}_1(n)\mathbb{\hat{E}}\bigg[\exp\bigg\{(2\kappa\tilde{\sigma}^2+\lambda )n \sup\limits_{t\in[0,T]}|Y_t|+n\int^T_0\beta_tdt\bigg\}\bigg],
\end{split}
\end{align*}	
where $\tilde{A}_1(n)$  is given by \[
\tilde{A}_1(n)=\frac{2^{2n}(1+\kappa^{-1}\tilde{\sigma}^{-2}+T)^n\kappa^{n}+{2^{4n-2}{A}^2(n)}}{\kappa^{2n}}, \ \forall n\geq 1.
\]
It follows from H\"{o}lder's inequality that for any $n\geq 1$
\begin{align}\label{myq6789}
\begin{split}	
\mathbb{\hat{E}}\bigg[\bigg(\int^T_0 |Z_t|^2dt\bigg)^n\bigg]
&\leq \mathbb{\hat{E}}\bigg[\exp\bigg\{2n\kappa\tilde{\sigma}^2 \sup\limits_{t\in[0,T]}|Y_t|\bigg\}\bigg(\int^T_0 e^{-2\kappa\tilde{\sigma}^2  Y_t}|Z_t|^2dt\bigg)^n\bigg]\\
&\leq  \mathbb{\hat{E}}\bigg[\exp\bigg\{4n\kappa\tilde{\sigma}^2 \sup\limits_{t\in[0,T]}|Y_t|\bigg\}\bigg]^{\frac{1}{2}}\mathbb{\hat{E}}\bigg[\bigg(\int^T_0 e^{-2\kappa\tilde{\sigma}^2  Y_t}|Z_t|^2dt\bigg)^{2n}\bigg]^{\frac{1}{2}}\\
&\leq \sqrt{\tilde{A}_1(2n)}\mathbb{\hat{E}}\bigg[\exp\bigg\{(4\kappa\tilde{\sigma}^2+2\lambda )n \sup\limits_{t\in[0,T]}|Y_t|+2n\int^T_0\beta_tdt\bigg\}\bigg].	
\end{split}
\end{align}	
	
On the other hand, from \eqref{myq1} and using Assumption (H5), we have that
	\begin{align*}
	-K_T\leq
(2+\lambda T)\sup\limits_{t\in[0,T]}|Y_t|+\int^T_0\beta_tdt+\frac{\kappa}{2}\int^T_0|Z_t|^{2}dt+\int^T_0 Z_tdB_t,
	\end{align*}
Using BDG inequality \eqref{myw901} again, we get that for each $n\geq 1$
	\begin{align*}
	\mathbb{\hat{E}}[|K_T|^n]
	&\leq 4^{n-1}\mathbb{\hat{E}}\bigg[(2+\lambda T)^n\sup\limits_{t\in[0,T]}|Y_t|^n+\bigg(\int^T_0\beta_tdt\bigg)^n\bigg]+2^{n-2}{\kappa}^n\mathbb{\hat{E}}\bigg[\bigg(\int^T_0|Z_t|^2dt\bigg)^n\bigg]\\
	&\ \ \ \ +4^{n-1}{A}(n)\mathbb{\hat{E}}\bigg[\bigg(\int^T_0|Z_t|^2dt\bigg)^{\frac{n}{2}}\bigg].
	\end{align*}
Consequently, by \eqref{myq6789} and in  view of
	the fact that  $e^{x}\geq 1+\frac{|x|^n}{n!}$ for any $x\geq 0$,
	we could find a constant $\tilde{A}(n)$  depending only on $\lambda,\tilde{\sigma},\bar{\sigma},\kappa, T$, and $n$, such that	
	\[\mathbb{\hat{E}}\bigg[\bigg(\int^T_0|Z_t|^2dt\bigg)^n+|K_T|^n \bigg]\leq \tilde{A}(n)\mathbb{\hat{E}}\bigg[\exp\bigg\{(4\kappa\tilde{\sigma}^2+2\lambda )n \sup\limits_{t\in[0,T]}|Y_t|+2n\int^T_0\beta_tdt\bigg\}\bigg],
	\]
	which ends the proof.
\end{proof}

Next, using a $\theta$-method formulated by \cite{BH2008},  we could get  the comparison theorem for quadratic $G$ BSDEs with unbounded terminal values.
\begin{lemma}
	\label{compare theorem}
	Let $(Y^l,Z^l,K^l)$ be a $ \mathcal{E}_G(\mathbb{R})\times\mathcal{H}_G(\mathbb{R}^d)\times \mathcal{L}_G(\mathbb{R})$-solution to  $G$-BSDEs \eqref{myq1} with data $(\xi^l,f^l)$, $l=1,2$. Suppose $(\xi^1,f^1)$ (resp. ($\xi^2,f^2$)) verifies Assumptions (H1)-(H4).
	If $\xi^1\leq \xi^2$ and $ f^1(s,y,z)\leq f^2(s,y,z) $, then $Y^1_t\leq Y^2_t$ for any $t\in[0,T]$.
\end{lemma}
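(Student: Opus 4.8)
The plan is to adapt the $\theta$-method of \cite{BH2008} to the $G$-setting so that the non-increasing $G$-martingales $K^1,K^2$ are absorbed into the a priori estimate of Lemma \ref{myq2}(ii), rather than being handled by a (here unavailable) linearization. Assume without loss of generality that $f^2(s,\omega,y,\cdot)$ is convex; the concave case is symmetric, working with $V^\theta:=\theta Y^1-Y^2$ and the concavity of $f^1$. Fix $\theta\in(0,1)$ and set $U^\theta:=Y^1-\theta Y^2$, $\delta_\theta Z:=Z^1-\theta Z^2$. Subtracting the two copies of \eqref{myq1},
\begin{align*}
U^\theta_t=(\xi^1-\theta\xi^2)+\int_t^T\big[f^1(s,Y^1_s,Z^1_s)-\theta f^2(s,Y^2_s,Z^2_s)\big]\,ds-\int_t^T\delta_\theta Z_s\,dB_s-\big[(K^1_T-K^1_t)-\theta(K^2_T-K^2_t)\big].
\end{align*}
The decisive point is that $(K^1_T-K^1_t)-\theta(K^2_T-K^2_t)$ is precisely of the form $(K_T-K_t)-(\bar K_T-\bar K_t)$ with $K:=K^1$ and $\bar K:=\theta K^2$, both non-increasing $G$-martingales (positive scaling preserves this); hence $U^\theta$ fits the framework of Lemma \ref{myq2}, and the required regularity $U^\theta\in S^2_G$, $\delta_\theta Z\in\mathcal{H}^2_G$, $K,\bar K\in\mathcal{L}^2_G$ follows from $Y^l\in\mathcal{E}_G$, $Z^l\in\mathcal{H}_G$, $K^l\in\mathcal{L}_G$.

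Next I would bound the driver of $U^\theta$ to produce the one-sided growth condition (H6). Using first $\xi^1\le\xi^2$ and $f^1\le f^2$, then the convexity of $f^2(s,Y^1_s,\cdot)$ along the decomposition $Z^1_s=\theta Z^2_s+(1-\theta)\frac{\delta_\theta Z_s}{1-\theta}$, then the $\lambda$-Lipschitz continuity in $y$ and the quadratic growth \eqref{myq771} (recall (H1)$\Rightarrow$(H5)), together with $|Y^1_s-Y^2_s|\le|U^\theta_s|+(1-\theta)|Y^2_s|$, one obtains
\begin{align*}
f^1(s,Y^1_s,Z^1_s)-\theta f^2(s,Y^2_s,Z^2_s)\le \hat\beta_s+\lambda|U^\theta_s|+\frac{\hat\kappa}{2}|\delta_\theta Z_s|^2,
\end{align*}
with $\hat\kappa:=\kappa/(1-\theta)$ and $\hat\beta_s:=(1-\theta)\big(\beta_s+\lambda|Y^1_s|+\lambda|Y^2_s|\big)$. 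The crucial feature is the scaling: $\hat\kappa$ carries a factor $(1-\theta)^{-1}$ whereas $\hat\beta$ and (through $\xi^1\le\xi^2$) the terminal value carry a factor $(1-\theta)$, since $(U^\theta_T)^+=(\xi^1-\theta\xi^2)^+\le(1-\theta)(\xi^2)^+$.

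Finally I would apply Lemma \ref{myq2}(ii) to $U^\theta$ with $p=1$, these $\hat\kappa,\hat\beta$, and $K=K^1$, $\bar K=\theta K^2$; note that the proof of that lemma uses only the pathwise one-sided bound just established, so it applies even though the driver of $U^\theta$ is not literally of the form $\hat f(s,U^\theta_s,\delta_\theta Z_s)$. For each fixed $\theta<1$ the integrability \eqref{myq6} holds because $Y^l\in\mathcal{E}_G$ and the estimate Lemma \ref{myq2}(i) give every exponential moment of $\sup_t(|Y^1_t|+|Y^2_t|)$, while (H4) controls $\int_0^T\beta_s\,ds$. In the resulting bound the products $\hat\kappa(U^\theta_T)^+$ and $\hat\kappa\hat\beta$ lose their $\theta$-dependence, leaving
\begin{align*}
\exp\Big\{\tfrac{\kappa\tilde\sigma^2}{1-\theta}e^{\lambda t}(U^\theta_t)^+\Big\}\le \mathbb{\hat{E}}_t\Big[\exp\Big\{\kappa\tilde\sigma^2e^{\lambda T}(\xi^2)^++\kappa\tilde\sigma^2\int_t^T(\beta_s+\lambda|Y^1_s|+\lambda|Y^2_s|)e^{\lambda s}\,ds\Big\}\Big]=:\Xi_t,
\end{align*}
where $\Xi_t$ is independent of $\theta$ and finite q.s. by (H4) and the exponential integrability of $Y^l$. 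Taking logarithms gives $(U^\theta_t)^+\le\frac{1-\theta}{\kappa\tilde\sigma^2 e^{\lambda t}}\log\Xi_t$, and letting $\theta\uparrow1$ (so that $U^\theta_t\to Y^1_t-Y^2_t$ q.s.) forces $(Y^1_t-Y^2_t)^+=0$, i.e. $Y^1_t\le Y^2_t$ for every $t$.

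I expect the main obstacle to be exactly the treatment of $K^1$ and $K^2$: a direct subtraction $Y^1-Y^2$ produces $K^1-K^2$, which is neither monotone nor a $G$-martingale of definite sign, and the conventional linearization of the quadratic term fails because $Z^1,Z^2$ need not lie in the BMO space. The $\theta$-device circumvents both issues at once — it recasts $K^1-\theta K^2$ as the admissible pair $(\theta K^2,K^1)$ of Lemma \ref{myq2} and, via the convexity of the generator, turns the quadratic term into $\frac{\hat\kappa}{2}|\delta_\theta Z_s|^2$ with the benign one-sided sign — and the delicate accounting that makes $\hat\kappa\hat\beta$ and $\hat\kappa(U^\theta_T)^+$ independent of $\theta$ is what renders the right-hand side uniform while keeping the left-hand side of order $(1-\theta)$.
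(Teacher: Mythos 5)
Your core computation is sound and is essentially the paper's own argument: the same $\theta$-method, the same recasting of $K^1-\theta K^2$ as the admissible pair $(K,\bar K)$ of Lemma \ref{myq2}, the same one-sided driver bound with the $(1-\theta)^{-1}$ vs.\ $(1-\theta)$ scaling, and the same passage $\theta\uparrow 1$ (you work with the unnormalized difference $U^\theta=Y^1-\theta Y^2$ and take logarithms, the paper with $\delta_\theta Y=U^\theta/(1-\theta)$; this is cosmetic). Your observation that Lemma \ref{myq2} only needs the pathwise one-sided bound (H6) along the solution is also correct and is implicitly used in the paper.

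However, your opening reduction is not a valid ``without loss of generality,'' and this leaves a genuine gap. The lemma assumes that \emph{one} of the two pairs, $(\xi^1,f^1)$ or $(\xi^2,f^2)$, satisfies (H1)--(H4); the other generator need satisfy nothing beyond having a solution. Your case (a) uses the convexity, the $\lambda$-Lipschitz continuity in $y$, the growth bound \eqref{myq771}, and the exponential integrability of $(\xi^2,f^2)$; your case (b) uses the concavity (hence (H1)--(H4)) of $f^1$. So you have covered exactly two of the four configurations: ``$(\xi^2,f^2)$ satisfies the assumptions with $f^2$ convex'' and ``$(\xi^1,f^1)$ satisfies them with $f^1$ concave.'' The configuration the paper actually treats as its main case --- $(\xi^1,f^1)$ satisfies (H1)--(H4) with $f^1$ \emph{convex}, while $f^2$ is arbitrary --- is not covered: there your chain of inequalities begins with $f^1(s,Y^1_s,Z^1_s)\le f^2(s,Y^1_s,Z^1_s)$ and then invokes properties of $f^2$ that are simply not assumed (likewise $(\xi^2)^+$ need not have exponential moments). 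The mirror configuration ($f^2$ concave and assumption-satisfying) is also missed. No relabeling rescues this: swapping the indices reverses the inequality to be proved, and the sign change $Y\mapsto -Y$ is not available in the $G$-setting because it turns the non-increasing $G$-martingale $K$ into a non-decreasing one, destroying the structure required by Lemma \ref{myq2}. The repair is short and is what the paper does: in the uncovered convex case, apply $f^1\le f^2$ to the \emph{subtracted} term, $-\theta f^2(s,Y^2_s,Z^2_s)\le-\theta f^1(s,Y^2_s,Z^2_s)$ (valid since $\theta>0$), then run your decomposition with the convexity, Lipschitz and growth properties of $f^1$, and bound the terminal value by $U^\theta_T=\xi^1-\theta\xi^2\le(1-\theta)\xi^1$, so that only the assumed data $(\xi^1,f^1)$ enter the right-hand side; the concave mirror case is analogous with $V^\theta=\theta Y^1-Y^2$ and the concavity of $f^2$.
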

\begin{proof}
 Without loss of generality, assume that $(\xi^1,f^1)$ verifies Assumptions  (H1)-(H4), and the other case could be proved in a similar way.

First, we consider the case when $f^1$ is convex in $z$. 	
	For each $\theta\in(0,1)$, we set
	\begin{align*}
	(\delta_{\theta}Y,\delta_{\theta}Z):=\left(\frac{Y^1-\theta {Y}^2}{1-\theta},\frac{Z^1-\theta {Z}^2}{1-\theta}\right),
	\end{align*}
	and then the triple  $(\delta_{\theta}Y,\delta_{\theta}Z,\frac{1}{1-\theta}K^1)$
	satisfies  the following $G$-BSDE on the interval $[0,T]$:
	\begin{align*}
	\begin{split}
	\delta_{\theta}Y_t=&\delta_{\theta}Y_T+\frac{\theta}{1-\theta}(K^2_T-K^2_t)+\int^T_t\delta_{\theta}f(s,\delta_{\theta}Y_s,\delta_{\theta}Z_s)ds  -\int^T_t\delta_{\theta}Z_sdB_s-\frac{1}{1-\theta}(K_T^1-K^1_t),
	\end{split}
	\end{align*}
	with $\delta_{\theta}f(t,y,z)=\frac{1}{1-\theta}\left(
	f^1(t,(1-\theta)y+{\theta}Y^2_t, (1-\theta)z+{\theta}Z^2_t)-\theta f^2(t,Y^2_t, Z^2_t)\right)$.
It is obvious that $\delta_{\theta}Y_T=\xi^1+\frac{\theta\xi^1-\theta\xi^2}{1-\theta}\leq \xi^1$.
With the help of \eqref{myq771}, we derive  that \[
	|f^1(t,\omega,y,z)|\leq \alpha_t(\omega)+\frac{\gamma}{2}+\lambda|y|+\frac{3\gamma}{2}|z|^2,
	\]
which together with convexity indicates that
	\begin{align*}
	& \delta_{\theta}f(t,y,z)\leq \frac{1}{1-\theta}\left(
	f^1(t,(1-\theta)y+{\theta}Y^2_t, (1-\theta)z+{\theta}Z^2_t)-\theta f^1(t,Y^2_t, Z^2_t)\right) \\
		&\ \ \  \ \ \ \ \ \ \ \ \ \  \ \leq  \lambda|y|+\lambda|{Y}^2_t|+\frac{1}{1-\theta}\left(
		f^1(t,Y^2_t, (1-\theta)z+{\theta}Z^2_t)-\theta f^1(t,Y^2_t, Z^2_t)\right)\\
	&\ \ \  \ \ \ \ \ \ \ \ \ \  \ \leq \lambda|y|+\lambda|{Y}^2_t|+
	f^1(t,Y^2_t, z)\leq \alpha_t+\frac{\gamma}{2}+2\lambda|{Y}^2_t|+\lambda|y|+\frac{3\gamma}{2}|z|^2.
	\end{align*}
	Using Assertion (ii) of Lemma \ref{myq2} (taking $p=1$, $\beta_t=\alpha_t+\frac{\gamma}{2}+2\lambda|{Y}^2_t|$ and $\kappa=3\gamma$), we deduce that
	\begin{align*}	
	\begin{split}
	&\exp\left\{{3\gamma\tilde{\sigma}^2}e^{\lambda t}\left(\delta_{\theta}Y_t\right)^+\right\}\leq   \mathbb{\hat{E}}_t\bigg[\exp\bigg\{{3\gamma\tilde{\sigma}^2}e^{\lambda T}\bigg(|\xi^1|+\frac{\gamma T}{2}+\int^T_t\left(\alpha_s+2\lambda|Y^2_s|\right)ds\bigg)\bigg\}	 \bigg],
	\end{split}
	\end{align*}
	which
	implies that for every $\theta\in (0,1)$ and $t\in[0,T]$,
	\begin{align*}
	&3\gamma\tilde{\sigma}^2(Y^1_t-Y^2_t)^+ \leq	3\gamma\tilde{\sigma}^2(Y^1_t-\theta Y^2_t)^++3(1-\theta)\gamma\tilde{\sigma}^2( Y^2_t)^-\\
	&\leq (1-\theta)\bigg(\mathbb{\hat{E}}_t\bigg[\exp\bigg\{{3\gamma\tilde{\sigma}^2}e^{\lambda T}\bigg(|\xi^1|+\frac{\gamma T}{2}+\int^T_t\left(\alpha_s+2\lambda|Y^2_s|\right)ds\bigg)\bigg\}	 \bigg]+3\gamma\tilde{\sigma}^2(Y^2_t)^-\bigg).
	\end{align*}
	Sending $\theta\rightarrow 1$ yields that $Y^1_t\leq Y^2_t$ for any $t\in[0,T]$.
	
Next, for the case that
 $f^1$ is concave in $z$,  we need to use
	$\theta \ell^{1}- \ell^{2}$ instead of
	$\ell^{1}-\theta \ell^{2}$  in the definition of $\delta_{\theta}\ell$ for $\ell=Y,Z$. In this case, the triple  $(\delta_{\theta}Y,\delta_{\theta}Z,\frac{\theta}{1-\theta}K^{1})$ solves the following $G$-BSDE on $[0,T]$:
	\begin{align*}
	\begin{split}
	\delta_{\theta}Y_t=&\delta_{\theta}Y_T+\frac{1}{1-\theta}(K_T^{2}-K_t^{2})+\int^T_t\delta_{\theta}f(s,\delta_{\theta}Y_s,\delta_{\theta}Z_s)ds  -\int^T_t\delta_{\theta}Z_sdB_s-\frac{\theta}{1-\theta}(K_T^{1}-K_t^{1})
	\end{split}
	\end{align*}
	with
	\begin{align*}
	\delta_{\theta}f(t,y,z)=\frac{1}{1-\theta}\left(
	\theta f^1(t,Y^{1}_t, Z^{1}_t)- f^2(t,-(1-\theta)y+\theta Y^{1}_t, -(1-\theta)z+\theta Z^{1}_t)\right).
	\end{align*}
	By Assumptions (H1), it is easy to check that $	\delta_{\theta}Y_T\leq \frac{\theta\xi^{1}-\xi^{1}}{1-\theta}=-\xi^1$ and
	\begin{align*}
	& \delta_{\theta}f(t,y,z)\leq  \lambda|y|+\lambda|{Y}^1_t|+\frac{1}{1-\theta}\left(
\theta	f^1(t,Y^1_t, Z^1_t)- f^1(t,Y^1_t, -(1-\theta)z+\theta Z^1_t)\right)\\
	&\ \ \  \ \ \ \ \ \ \ \ \ \  \ \leq \lambda|y|+\lambda|{Y}^1_t|
	-f^1(t,Y^1_t, -z)\leq \alpha_t+\frac{\gamma}{2}+2\lambda|{Y}^1_t|+\lambda|y|+\frac{3\gamma}{2}|z|^2.
	\end{align*}
Thus, in view of Assertion (ii) of Lemma \ref{myq2}, we have that
for every $\theta\in (0,1)$ and $t\in[0,T]$,
\begin{align*}
&3\gamma\tilde{\sigma}^2(Y^1_t-Y^2_t)^+ \leq	3\gamma\tilde{\sigma}^2(\theta Y^1_t- Y^2_t)^++3(1-\theta)\gamma\tilde{\sigma}^2( Y^1_t)^+\\
&\leq (1-\theta)\bigg(\mathbb{\hat{E}}_t\bigg[\exp\bigg\{{3\gamma\tilde{\sigma}^2}e^{\lambda T}\bigg(|\xi^1|+\frac{\gamma T}{2}+\int^T_t\left(\alpha_s+2\lambda|Y^1_s|\right)ds\bigg)\bigg\}	 \bigg]+3\gamma\tilde{\sigma}^2(Y^1_t)^+\bigg),
\end{align*}
which completes the proof by sending $\theta\rightarrow 1$.		
\end{proof}

Now we are ready to state the main result of this section, which involves the existence and uniqueness of  unbounded solutions to  quadratic $G$-BSDE \eqref{myq1}.
\begin{theorem}\label{myq66}
Assume that (H1)-(H4) are satisfied. Then, $G$-BSDE  \eqref{myq1} admits a unique solution $(Y, Z, K) \in   \mathcal{E}_G(\mathbb{R})\times\mathcal{H}_G(\mathbb{R}^d)\times \mathcal{L}_G(\mathbb{R})$.
\end{theorem}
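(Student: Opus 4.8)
The plan is to read uniqueness directly off the comparison theorem. Given two solutions $(Y^l,Z^l,K^l)\in\mathcal{E}_G(\mathbb{R})\times\mathcal{H}_G(\mathbb{R}^d)\times\mathcal{L}_G(\mathbb{R})$, $l=1,2$, of \eqref{myq1} with the same data $(\xi,f)$ satisfying (H1)--(H4), I would apply Lemma \ref{compare theorem} in both directions to get $Y^1\le Y^2$ and $Y^2\le Y^1$, hence $Y^1=Y^2$ q.s. Subtracting the two equations and using the uniqueness of the canonical decomposition of a $G$-semimartingale (equivalently, applying $G$-It\^o to $|Y^1-Y^2|^2\equiv0$, so that $\int_0^T(Z^1_s-Z^2_s)(Z^1_s-Z^2_s)^\top d\langle B\rangle_s=0$ and then non-degeneracy \eqref{Non-degenerated condition of G} gives $Z^1=Z^2$) then forces $Z^1=Z^2$ in $\mathcal{H}_G(\mathbb{R}^d)$ and, in turn, $K^1=K^2$.

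\textbf{Existence: bounded approximations and a priori control.} I may assume $f(t,\omega,y,\cdot)$ is convex in $z$; the concave case is entirely analogous, using the other $\theta$-parametrization of Lemma \ref{compare theorem}. The idea is to approximate \eqref{myq1} by quadratic $G$-BSDEs with bounded data, which are solvable by \cite{HL}. I would set $\xi^n:=(\xi\wedge n)\vee(-n)$ and truncate only the inhomogeneous part of the generator, $f^n(t,\omega,y,z):=f(t,\omega,y,z)-f(t,\omega,0,0)+\big((f(t,\omega,0,0)\wedge n)\vee(-n)\big)$. Since the subtracted term depends on $(t,\omega)$ alone, $f^n$ keeps the constants $\lambda,\gamma$ of (H1), remains convex in $z$, and has $|f^n(t,\omega,0,0)|\le n$; together with bounded $\xi^n$ this places each approximate equation in the framework of \cite{HL} and yields a unique solution $(Y^n,Z^n,K^n)$ with $Y^n$ bounded. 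The a priori estimates then supply the uniform control needed to pass to the limit: Lemma \ref{myq2}, combined with (H4), bounds the exponential moments of $\sup_t|Y^n_t|$ uniformly in $n$, so by Lemma \ref{myq156} the eventual limit lies in $\mathcal{E}_G(\mathbb{R})$, while Lemma \ref{myq299} bounds $\mathbb{\hat{E}}\big[(\int_0^T|Z^n_s|^2ds)^p+|K^n_T|^p\big]$ uniformly in $n$ for every $p\ge1$.

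\textbf{Existence: passage to the limit.} Following the philosophy of \cite{BH2008,FHT2}, and deliberately avoiding the monotone and Picard approximations that are problematic here, I would establish convergence of $(Y^n,Z^n,K^n)$ through a $\theta$-method stability estimate. Exactly as in the proof of Lemma \ref{compare theorem}, I would estimate $Y^n-\theta Y^m$ for $\theta\in(0,1)$, using convexity to linearize the generator and Lemma \ref{myq2}(ii) to control the resulting exponential, then let $\theta\to1$; combined with the uniform exponential moments this shows $\{Y^n\}$ converges to some $Y\in\mathcal{E}_G(\mathbb{R})$. A $G$-It\^o computation on a suitable convex function of $Y^n-Y^m$, with the quadratic term absorbed by convexity and the stochastic integral handled by the BDG inequality of Lemma \ref{myw90156} together with the bounds of Lemma \ref{myq299}, would show $\{Z^n\}$ is Cauchy in $\mathcal{H}_G(\mathbb{R}^d)$; the equation then forces $K^n_T\to K_T$ in $L^p_G$. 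Finally I would verify that the limit $K$ remains a non-increasing $G$-martingale via the approximation argument of Lemma \ref{myq10} and the admissible downward convergence of Lemma \ref{downward convergence proposition}(ii), and pass to the limit in \eqref{myq1} to conclude that $(Y,Z,K)$ solves the equation in the required spaces.

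\textbf{Main obstacle.} The crux is the stability/Cauchy estimate of the last step. Because $\xi$ is unbounded, $Z^n$ need not be bounded in the $G$-BMO space, so the conventional linearization and $L^2$-stability arguments break down; the remedy is to exploit convexity through the $\theta$-method together with the exponential a priori estimates of Lemma \ref{myq2} and Lemma \ref{myq299}. A secondary difficulty is that the nonlinear $G$-expectation does not enjoy upward monotone convergence, so the limiting non-increasing $G$-martingale $K$ must be treated via Lemma \ref{myq10} and Lemma \ref{downward convergence proposition}(ii) rather than by a naive passage to the limit.
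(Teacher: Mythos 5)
Your proposal follows the paper's proof essentially step for step: uniqueness via Lemma \ref{compare theorem} applied in both directions plus a $G$-It\^o argument on $|Y^1-Y^2|^2$, and existence via exactly the same truncation (cutting off $\xi$ and $f(t,\omega,0,0)$ only, so that (H1) and convexity are preserved), solvability of the truncated equations by \cite{HL}, the uniform exponential estimates of Lemma \ref{myq2} and Lemma \ref{myq299}, a $\theta$-method Cauchy estimate for $Y^{(m)}$ with $\theta\to1$, and an It\^o computation giving convergence of $Z^{(m)}$ with $K$ recovered from the equation. The only cosmetic deviations are in the final limit step, where the paper controls the cross terms by Cauchy--Schwarz against the uniformly $L^2$-bounded quantity $\Gamma^{(m,q)}$ rather than by BDG or a convex-function trick, and obtains the non-increasing $G$-martingale property of $K$ directly from the $L^n_G$-convergence of $K^{(m)}$ (Lemma \ref{myq10} plays its role inside Lemma \ref{myq2}, not here); these do not change the substance.
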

\begin{proof}
The uniqueness is immediate from Lemma \ref{compare theorem}. Indeed,
let $(Y^l,Z^l,K^l)$ be a  $ \mathcal{E}_G(\mathbb{R})\times\mathcal{H}_G(\mathbb{R}^d)\times \mathcal{L}_G(\mathbb{R})$-solution
to $G$-BSDE \eqref{myq1}, $l=1,2$. It follows from Lemma \ref{compare theorem} that $Y^1={Y}^2$. Then, applying $G$-It\^o's formula to $\left|Y^1-{Y}^2\right|^2$ yields that $Z^1=Z^2$ and then $K^1=K^2$. Thus, we only need to prove the existence.
We will  construct a solution through a sequence of  quadratic $G$-BSDEs with bounded terminal value and generator. The proof will be divided into the following three steps.
	
{\bf 1. The uniform estimates.} 	
Denote by $f_0(t)=f(t,0,0)$ for narrative  convenience. Then, for each positive integer $m\geq 1$, set
 $$
 \ell^{(m)}=(\ell\wedge m)\vee (-m)\ \text{for $\ell=\xi, f_0(t)$, and}\
f^{(m)}(t, y, z)=f(t,y,z)-f_0(t)+f_0^{(m)}(t).$$
One can easily check that the terminal value $\xi^{m}$ and generator $f^{(m)}$ satisfies \cite[Assumption 2.14]{HL}. Thus,
it follows from \cite[Theorem 5.3]{HL} that,
the $G$-BSDE \eqref{myq1} with data $(\xi^{(m)}, f^{(m)})$ admits a unique solution $(Y^{(m)}, Z^{(m)}, K^{(m)}) \in   \mathcal{E}_G(\mathbb{R})\times\mathcal{H}_G(\mathbb{R}^d)\times \mathcal{L}_G(\mathbb{R})$. From Assumption (H1),  we have that 	\begin{align}
\label{myq40} |f^{(m)}(t,\omega,y,z)|\leq \alpha_t(\omega)+\frac{\gamma}{2}+\lambda|y|+\frac{3\gamma}{2}|z|^2.
\end{align}
In spirit of Lemma \ref{myq2} and Remark \ref{myq7} (taking  $\beta_t=\alpha_t+\frac{\gamma}{2}$ and $\kappa=3\gamma$), we get that for any $p\geq 1$
	\begin{align}
	\label{myq11}
\begin{split}
\sup\limits_{m\geq 1}\mathbb{\hat{E}}\bigg[\exp\bigg\{{3p\gamma\tilde{\sigma}^2}\sup\limits_{0\leq t\leq T}|Y^{(m)}_t|\bigg\}
\bigg] &\leq\mathbb{\hat{E}}\bigg[\sup\limits_{0\leq t\leq T}\mathbb{\hat{E}}_t\bigg[\exp\bigg\{3p\gamma\tilde{\sigma}^2e^{\lambda T}\bigg(|\xi|+\frac{\gamma T}{2}+\int^T_0\alpha_tdt\bigg)\bigg\}	\bigg]\bigg]\\
&\leq \hat{A}(G)  \mathbb{\hat{E}}\bigg[\exp\bigg\{6p\gamma\tilde{\sigma}^2e^{\lambda T}\bigg(|\xi|+\frac{\gamma T}{2}+\int^T_0\alpha_tdt\bigg)\bigg\}	\bigg].
\end{split}
\end{align}	
Recalling Lemma \ref{myq299} and H\"{o}lder's inequality, we obtain for any $n\geq 1$,
\begin{align*}
&\mathbb{\hat{E}}\left[\bigg(\int^T_0|Z^{(m)}_t|^2dt\bigg)^n+|K^{(m)}_T|^n \right]\leq \tilde{A}(n)\mathbb{\hat{E}}\bigg[\exp\bigg\{(12\gamma\tilde{\sigma}^2+2\lambda )n \sup\limits_{t\in[0,T]}|Y_t^{(m)}|+n\gamma T +2n\int^T_0\alpha_tdt\bigg\}\bigg]\\ & \ \ \ \ \ \ \ \ \ \ \ \ \ \ \ \ \ \ \ \ \ \ \
\leq\tilde{A}(n)e^{n\gamma T}\mathbb{\hat{E}}\bigg[\exp\bigg\{(24\gamma\tilde{\sigma}^2+4\lambda )n\sup\limits_{t\in[0,T]}|Y_t^{(m)}|\bigg\}\bigg]^{\frac{1}{2}}\mathbb{\hat{E}}\bigg[\exp\bigg\{4n\int^T_0\alpha_tdt\bigg\}\bigg]^{\frac{1}{2}},
\end{align*}
where the constant $\tilde{A}(n)$ is  independent of $m$. Therefore, in view of \eqref{myq11}, we conclude that
\begin{align} \label{myq30}
&\sup\limits_{m\geq 1}\mathbb{\hat{E}}\left[\bigg(\int^T_0|Z^{(m)}_t|^2dt\bigg)^n+|K^{(m)}_T|^n \right]<\infty, \ \forall n\geq 1.
\end{align}

{\bf 2. $\theta$-method.}  We first consider the case when the generator $f$ is convex in $z$.
For each fixed $m,q\geq 1$ and $\theta\in(0,1)$, we define
\begin{align*}
\delta_{\theta}Y^{(m,q)}:=\frac{Y^{(m+q)}-\theta Y^{(m)}}{1-\theta},\
\delta_{\theta}Z^{(m,q)}:=\frac{Z^{(m+q)}-\theta Z^{(m)}}{1-\theta}.
\end{align*}
Then, the triple $(\delta_{\theta}Y^{(m,q)},\delta_{\theta}Z^{(m,q)},\frac{1}{1-\theta}K^{(m)})$
solves the following $G$-BSDE:
\begin{align}\label{myq12}
\begin{split}
\delta_{\theta}Y^{(m,q)}_t=&\delta_{\theta}\xi^{(m,q)}+\frac{\theta}{1-\theta}(K_T^{(m)}-K_t^{(m)})-\int^T_t\delta_{\theta}Z^{(m,q)}_sdB_s-\frac{1}{1-\theta}(K_T^{(m+q)}-K_t^{(m+q)}) \\
&\ \ +\int^T_t\left(\delta_{\theta}f^{(m,q)}(s,\delta_{\theta}Y^{(m,q)}_s,\delta_{\theta}Z^{(m,q)}_s)+\delta_{\theta}f_0^{(m,q)}(s)\right)ds,
\end{split}
\end{align}
where the terminal value and generator are given by
\begin{align*}
&\delta_{\theta}\xi^{(m,q)}=\frac{\xi^{(m+q)}-\theta\xi^{(m)}}{1-\theta},\ \ \ \delta_{\theta}f_0^{(m,q)}(t)=
\frac{1}{1-\theta}\left(f^{(m+q)}_0(t)-\theta f^{(m)}_0(t)\right)-f_0(t),\\
&
\delta_{\theta}f^{(m,q)}(t,y,z)=\frac{1}{1-\theta}\left(
f(t,(1-\theta)y+{\theta}Y^{(m)}_t, (1-\theta)z+{\theta}Z^{(m)}_t)-\theta f(t,Y^{(m)}_t, Z^{(m)}_t)\right).
\end{align*}
 A direct computation yields that
 \begin{align*}
&\delta_{\theta}\xi^{(m,q)}=\xi^{(m)}+\frac{1}{1-\theta}(\xi^{(m+q)}-\xi^{(m)})\leq |\xi|+\frac{1}{1-\theta}(|\xi|-m)^+,\\
 &\delta_{\theta}f_0^{(m,q)}(t)\leq f_0^{(m)}(t)-f_0(t)+\frac{1}{1-\theta}(|f_0(t)|-m)^+\leq \frac{2}{1-\theta}(|f_0(t)|-m)^+.
 \end{align*}
Using Assumptions (H1) and (H3), we conclude that
 \begin{align*}
\delta_{\theta}f^{(m,q)}(t,y,z)
 \leq  \lambda|y|+\lambda|Y^{(m)}_t|+f(t,Y^{(m)}_t, z)
 \leq  \alpha_t+\frac{\gamma}{2}+2\lambda|Y^{(m)}_t|+\lambda|y|+\frac{3\gamma}{2}|z|^2.
\end{align*}
Applying Assertion (ii) of Lemma \ref{myq2} to Equation \eqref{myq12}, we derive that  for any $p\geq 1$
  \begin{align}	\label{myq21}
 \begin{split}
 &\exp\left\{{3p\gamma\tilde{\sigma}^2}e^{\lambda t}\left(\delta_{\theta}Y^{(m,q)}_t\right)^+\right\}\\
 &\leq   \mathbb{\hat{E}}_t\bigg[\exp\bigg\{{3p\gamma\tilde{\sigma}^2}e^{\lambda T}\bigg(\rho(\theta,m)+|\xi|+\frac{\gamma T}{2}+\int^T_t\left(\alpha_s+2\lambda|Y^{(m)}_s|\right)ds\bigg)\bigg\}	\bigg],
 \end{split}
	\end{align}
where $\rho(\theta,m)$ is given by
\[
\rho(\theta,m):=\frac{1}{1-\theta}(|\xi|-m)^++\frac{2}{1-\theta}\int^T_0(|f_0(t)|-m)^+dt.
\]

On the other hand, we define
\[
\delta_{\theta}\widetilde{Y}^{(m,q)}:=\frac{Y^{(m)}-\theta Y^{(m+q)} }{1-\theta},\
\delta_{\theta}\widetilde{Z}^{(m,q)}:=\frac{Z^{(m)}-\theta Z^{(m+q)}}{1-\theta}.
\]
Then, by a similar analysis, we conclude that
 \begin{align}
 	\label{myq20}
\begin{split}
&\exp\left\{{3p\gamma\tilde{\sigma}^2}e^{\lambda t}\left(\delta_{\theta}\widetilde{Y}^{(m,q)}_t\right)^+\right\}\\
&\leq   \mathbb{\hat{E}}_t\bigg[\exp\bigg\{3p{\gamma\tilde{\sigma}^2}e^{\lambda T}\bigg(\rho(\theta,m)+|\xi|+\frac{\gamma T}{2}+\int^T_t\left(\alpha_s+2\lambda|Y^{(m+p)}_s|\right)ds\bigg)\bigg\}	\bigg].
\end{split}
\end{align}

Note that
\begin{align*}
\left(\delta_{\theta}{Y}^{(m,q)}\right)^-\leq \frac{\theta\left( Y^{(m)}-\theta Y^{(m+q)}\right)^++(1-\theta^2)|Y^{(m+q)}|}{1-\theta}
\leq \left(\delta_{\theta}\widetilde{Y}^{(m,q)}\right)^++2|Y^{(m+q)}|.
\end{align*}
Thus, it follows from \eqref{myq21} and \eqref{myq20} that
{  \begin{align*}
	\begin{split}
	&\exp\left\{{3p\gamma\tilde{\sigma}^2}e^{\lambda t}\left|\delta_{\theta}{Y}^{(m,q)}_t\right|\right\}\leq
	\exp\left\{{3p\gamma\tilde{\sigma}^2}e^{\lambda t}\left(\left( \delta_{\theta}{Y}^{(m,q)}_t\right)^++\left(\delta_{\theta}\widetilde{Y}^{(m,q)}_t\right)^++2|Y^{(m+q)}_t|\right)\right\}\\
	&\leq   \mathbb{\hat{E}}_t\bigg[\exp\bigg\{{3p\gamma\tilde{\sigma}^2}e^{\lambda T}\bigg(\rho(\theta,m)+|\xi|+\frac{\gamma T}{2}+|Y^{(m+q)}_t|+\int^T_t\left(\alpha_s+2\lambda|Y^{(m)}_s|+2\lambda|Y^{(m+q)}_s|\right)ds\bigg)\bigg\}	\bigg]^2
	\\ &\leq \mathbb{\hat{E}}_t\bigg[\exp\bigg\{6{p\gamma\tilde{\sigma}^2}e^{\lambda T}\bigg(\rho(\theta,m)+|\xi|+\frac{\gamma T}{2}+|Y^{(m+q)}_t|+\int^T_t\left(\alpha_s+2\lambda|Y^{(m)}_s|+2\lambda|Y^{(m+q)}_s|\right)ds\bigg)\bigg\}	\bigg],
	\end{split}
	\end{align*}
}
where we have used Jensen's inequality in the last inequality.

 Consequently, from Remark \ref{myq7} and  H\"{o}lder's inequality, we have
{ \small \begin{align}	\label{myq22}
	\begin{split}
	&\mathbb{\hat{E}}\bigg[\exp\left\{{3p\gamma\tilde{\sigma}^2}\sup\limits_{t\in[0,T]}\left|\delta_{\theta}{Y}^{(m,q)}_t\right|\right\}\bigg]
	\\ &\leq \hat{A}(G)\mathbb{\hat{E}}\bigg[\exp\bigg\{12{p\gamma\tilde{\sigma}^2}e^{\lambda T}\bigg(\rho(\theta,m)+|\xi|+\frac{\gamma T}{2}+(2\lambda T+1)\sup\limits_{t\in[0,T]}(|Y^{(m)}_t|+|Y^{(m+q)}_t|)+\int^T_0\alpha_tdt\bigg)\bigg\}	\bigg]\\
&\leq
\bar{A}(p) \mathbb{\hat{E}}\bigg[\exp\bigg\{\frac{24p{\gamma\tilde{\sigma}^2}e^{\lambda T}}{1-\theta}\bigg((|\xi|-m)^++\int^T_02(|f_0(t)|-m)^+dt\bigg)\bigg\}	\bigg]^{\frac{1}{2}},
	\end{split}
	\end{align}
}
where, in view of \eqref{myq11} and H\"{o}lder's inequality again,
{\small \[
\bar{A}(p):=\hat{A}(G)\sup\limits_{m,q\geq 1}\mathbb{\hat{E}}\bigg[\exp\bigg\{24{p\gamma\tilde{\sigma}^2}e^{\lambda T}\bigg(|\xi|+\frac{\gamma T}{2}+(2\lambda T+1)\sup\limits_{t\in[0,T]}(|Y^{(m)}_t|+|Y^{(m+q)}_t|)+\int^T_0\alpha_tdt\bigg)\bigg\}	\bigg]^{\frac{1}{2}}<\infty.
\]}

Next, we shall deal with the case that
when the generator $f$ is concave in $z$. We shall use
$\theta \ell^{(m+q)}- \ell^{(m)}$ and $\theta \ell^{(m)}- \ell^{(m+q)}$  instead of
$\ell^{(m+q)}-\theta \ell^{(m)}$ and $\ell^{(m)}-\theta \ell^{(m+q)}$  in the definition of $\delta_{\theta}\ell^{(m,q)}$ and $\delta_{\theta}\widetilde{\ell}^{(m,q)}$ for $\ell=Y,Z$, respectively.

In this case, the triple $(\delta_{\theta}Y^{(m,q)},\delta_{\theta}Z^{(m,q)},\frac{\theta}{1-\theta}K^{(m)})$
solves the following $G$-BSDE:
\begin{align*}
\begin{split}
\delta_{\theta}Y^{(m,q)}_t=&\delta_{\theta}\xi^{(m,q)}+\frac{1}{1-\theta}(K_T^{(m)}-K_t^{(m)})+\int^T_t\left(\delta_{\theta}f^{(m,q)}(s,\delta_{\theta}Y^{(m,q)}_s,\delta_{\theta}Z^{(m,q)}_s)+\delta_{\theta}f_0^{(m,q)}(s)\right)ds \\
&\ \ -\int^T_t\delta_{\theta}Z^{(m,q)}_sdB_s-\frac{\theta}{1-\theta}(K_T^{(m+q)}-K_t^{(m+q)})
\end{split}
\end{align*}
with
\begin{align*}
&\delta_{\theta}\xi^{(m,q)}=\frac{\theta\xi^{(m+q)}-\xi^{(m)}}{1-\theta},\ \ \ \delta_{\theta}f_0^{(m,q)}(t)=
\frac{1}{1-\theta}\left(\theta f^{(m+q)}_0(t)- f^{(m)}_0(t)\right)+f_0(t),\\
&
\delta_{\theta}f^{(m,q)}(t,y,z)=\frac{1}{1-\theta}\left(
\theta f(t,Y^{(m+q)}_t, Z^{(m+q)}_t)- f(t,-(1-\theta)y+\theta Y^{(m+q)}_t, -(1-\theta)z+\theta Z^{(m+q)}_t)\right).
\end{align*}
By Assumptions (H1) and (H3), it is easy to check that
\begin{align*}
&\delta_{\theta}\xi^{(m,q)}=-\xi^{(m+q)}+\frac{1}{1-\theta}(\xi^{(m+q)}-\xi^{(m)})\leq |\xi|+\frac{1}{1-\theta}(|\xi|-m)^+,\\
&\delta_{\theta}f_0^{(m,q)}(t)\leq -f_0^{(m+q)}(t)+f_0(t)+\frac{1}{1-\theta}(|f_0(t)|-m)^+\leq \frac{2}{1-\theta}(|f_0(t)|-m)^+,\\
&\delta_{\theta}f^{(m,q)}(t,y,z)
\leq  \lambda|y|+\lambda|Y^{(m+q)}_t|-f(t,Y^{(m+q)}_t, -z)
\leq  \alpha_t+\frac{\gamma}{2}+2\lambda|Y^{(m+q)}_t|+\lambda|y|+\frac{3\gamma}{2}|z|^2.
\end{align*}

Consequently, Inequalities \eqref{myq21} and  \eqref{myq20} should be replaced with
{ \small \begin{align*}	
	\begin{split}
	&\exp\left\{{3p\gamma\tilde{\sigma}^2}e^{\lambda t}\left(\delta_{\theta}Y^{(m,q)}_t\right)^+\right\}\leq   \mathbb{\hat{E}}_t\bigg[\exp\bigg\{{3p\gamma\tilde{\sigma}^2}e^{\lambda T}\bigg(\rho(\theta,m)+|\xi|+\frac{\gamma T}{2}+\int^T_t\left(\alpha_s+2\lambda|Y^{(m+q)}_s|\right)ds\bigg)\bigg\}	\bigg],\\
	&\exp\left\{{3p\gamma\tilde{\sigma}^2}e^{\lambda t}\left(\delta_{\theta}\widetilde{Y}^{(m,q)}_t\right)^+\right\}\leq   \mathbb{\hat{E}}_t\bigg[\exp\bigg\{3p{\gamma\tilde{\sigma}^2}e^{\lambda T}\bigg(\rho(\theta,m)+|\xi|+\frac{\gamma T}{2}+\int^T_t\left(\alpha_s+2\lambda|Y^{(m)}_s|\right)ds\bigg)\bigg\}	\bigg].
	\end{split}
	\end{align*}}
It follows from
$
\left(\delta_{\theta}{Y}^{(m,q)}\right)^-
\leq \left(\delta_{\theta}\widetilde{Y}^{(m,q)}\right)^++2|Y^{(m)}|
$ that
{  \begin{align*}
	\begin{split}
	&\exp\left\{{3p\gamma\tilde{\sigma}^2}e^{\lambda t}\left|\delta_{\theta}{Y}^{(m,q)}_t\right|\right\}
	\\ &\leq \mathbb{\hat{E}}_t\bigg[\exp\bigg\{6{p\gamma\tilde{\sigma}^2}e^{\lambda T}\bigg(\rho(\theta,m)+|\xi|+\frac{\gamma T}{2}+|Y^{(m)}_t|+\int^T_t\left(\alpha_s+2\lambda|Y^{(m)}_s|+2\lambda|Y^{(m+q)}_s|\right)ds\bigg)\bigg\}	 \bigg],
	\end{split}
	\end{align*}
}
and then Inequality \eqref{myq22} is still true when $f$ is concave in $z$.
	
{\bf 3.  The convergence.}
Since
$$\exp\bigg\{\frac{24p{\gamma\tilde{\sigma}^2}e^{\lambda T}}{1-\theta}\bigg((|\xi|-m)^++\int^T_02(|f_0(t)|-m)^+dt\bigg)\bigg\}\in L_G^{1}(\Omega)\downarrow 1 $$ as $m\rightarrow\infty$,
from  nonlinear monotone convergence theorem (Assertion (ii) of Lemma \ref{downward convergence proposition}), we have that for each $p\geq 1$ and $\theta\in (0,1)$,
\[
\lim\limits_{m\rightarrow\infty}\mathbb{\hat{E}}\bigg[\exp\bigg\{\frac{24p{\gamma\tilde{\sigma}^2}e^{\lambda T}}{1-\theta}\bigg((|\xi|-m)^++\int^T_02(|f_0(s)|-m)^+ds\bigg)\bigg\}	\bigg]^{\frac{1}{2}}=1,
\]
which together with Inequality \eqref{myq22} indicates that ,
\[
\limsup_{m\rightarrow \infty}\sup\limits_{q\geq 1}\mathbb{\hat{E}}\bigg[\exp\left\{{3p\gamma\tilde{\sigma}^2}\sup\limits_{t\in[0,T]}\left|\delta_{\theta}{Y}^{(m,q)}_t\right|\right\}\bigg]\leq \bar{A}(p), \ \forall \theta\in(0,1).
\]
It follows that for each $n\geq 1$  and $\theta\in (0,1)$,
\[
\limsup_{m\rightarrow \infty}\sup\limits_{q\geq 1}\mathbb{\hat{E}}\bigg[\sup\limits_{t\in[0,T]}
\left|\delta_{\theta}{Y}^{(m,q)}_t\right|^n\bigg]\leq \frac{\bar{A}(1)n!}{3^n\gamma^n \tilde{\sigma}^{2n}}.
\]
In view of the following fact
$${Y}^{(m+q)}-{Y}^{(m)}=(1-\theta)(\delta_{\theta}{Y}^{(m,q)}-Y^{(m)}) \ \text{ (resp. $(1-\theta)(\delta_{\theta}{Y}^{(m,q)}+Y^{(m+q)})$)} $$
  when $f$ is convex (resp. concave ) in $z$,  we derive that for any $n\geq 1$ and $\theta\in (0,1)$,
\begin{align*}
\limsup_{m\rightarrow \infty}\sup\limits_{q\geq 1}\mathbb{\hat{E}}\bigg[\sup\limits_{t\in[0,T]}
\left|{Y}^{(m+q)}_t-{Y}^{(m)}_t\right|^n\bigg]\leq 2^{n-1}(1-\theta)^n\bigg(\frac{\bar{A}(1)n!}{3^n\gamma^n \tilde{\sigma}^{2n}}+\sup\limits_{m\geq 1}\mathbb{\hat{E}}\bigg[\sup\limits_{t\in[0,T]}\left|{Y}^{(m)}_t\right|^n\bigg]\bigg).
\end{align*}
Sending $\theta\rightarrow 1$ and using \eqref{myq11}, we could find a continuous process $Y\in\mathcal{E}_G(\mathbb{R})$ such that
\begin{align}\label{myq37}
\lim_{m\rightarrow \infty}\mathbb{\hat{E}}\bigg[\sup\limits_{t\in[0,T]}
\left|{Y}^{(m)}_t-{Y}_t\right|^n\bigg]=0,\ \forall n\geq 1.
\end{align}
Indeed, from \eqref{myq11} and  Assertion (i) of Lemma \ref{downward convergence proposition}, we have that for any $p\geq 1$
\[
\mathbb{\hat{E}}\bigg[\exp\bigg\{{3p\gamma\tilde{\sigma}^2}\sup\limits_{0\leq t\leq T}|Y_t|\bigg\}
\bigg] \leq \hat{A}(G) \mathbb{\hat{E}}\bigg[\exp\bigg\{6p\gamma\tilde{\sigma}^2e^{\lambda T}\bigg(|\xi|+\frac{\gamma T}{2}+\int^T_0\alpha_sds\bigg)\bigg\}	\bigg].
\]

Now, applying $G$-It\^o's formula to $\left|{Y}^{(m+q)}_t-{Y}^{(m)}_t\right|^2$ yields that
\begin{align}\label{myq31}
\begin{split}
&\mathbb{\hat{E}}\bigg[\int^T_0\big|Z^{(m+q)}_t-Z^{(m)}_t\big|^2dt \bigg]\leq \tilde{\sigma}^2\mathbb{\hat{E}}\bigg[\int^T_0(Z^{(m+q)}_t-Z^{(m)}_t)(Z^{(m+q)}_t-Z^{(m)}_t)^{\top}d\langle B\rangle_t \bigg]\\
&\leq \tilde{\sigma}^2\mathbb{\hat{E}}\bigg[\sup\limits_{t\in[0,T]}
\left|{Y}^{(m+q)}_t-{Y}^{(m)}_t\right|^2+
\sup\limits_{t\in[0,T]}
\left|{Y}^{(m+q)}_t-{Y}^{(m)}_t\right|\Gamma^{(m,q)}\bigg]\\
&\leq \tilde{\sigma}^2\mathbb{\hat{E}}\bigg[\sup\limits_{t\in[0,T]}
\left|{Y}^{(m+q)}_t-{Y}^{(m)}_t\right|^2\bigg]+
\tilde{\sigma}^2\mathbb{\hat{E}}[|\Gamma^{(m,q)}|^2]^{\frac{1}{2}}\mathbb{\hat{E}}\bigg[\sup\limits_{t\in[0,T]}
\left|{Y}^{(m+q)}_t-{Y}^{(m)}_t\right|^2\bigg]^{\frac{1}{2}},
\end{split}
\end{align}
with
\[
\Gamma^{(m,q)}:=\int^T_0\big|f^{(m+q)}(t,Y^{(m+q)}_t,Z^{(m+q)}_t)- f^{(m)}(t,Y^{(m)}_t,Z^{(m)}_t)\big|dt+|K_T^{(m+q)}|+|K_T^{(m)}|.
\]
In spirit of  Inequalities \eqref{myq40}, \eqref{myq11}, and \eqref{myq30}, we see that
$\sup\limits_{m,q\geq 1}\mathbb{\hat{E}}[|\Gamma^{(m,q)}|^2]<\infty. $
Thus, by   \eqref{myq37} and \eqref{myq31}, there is a process $Z\in M^2_G(0,T)$ so that
\begin{align}\label{myq51}
\lim_{m\rightarrow \infty}\mathbb{\hat{E}}\bigg[\int^T_0\big|Z^{(m)}_t-Z_t\big|^2dt \bigg]=0.
\end{align}
In view of  Assertion (i) of Lemma \ref{downward convergence proposition} and \eqref{myq30}, we have that $Z\in\mathcal{H}_G(\mathbb{R}^d)$, i.e., for any $n\geq 1$
\[
\mathbb{\hat{E}}\bigg[\bigg(\int^T_0|Z_t|^2dt\bigg)^n \bigg]<\infty,
\]
which together with Lemma \ref{myq38} and   \eqref{myq30}, \eqref{myq51} implies that
\begin{align}\label{myq511}\lim_{m\rightarrow \infty}\mathbb{\hat{E}}\bigg[\bigg(\int^T_0\big|Z^{(m)}_t-Z_t\big|^2dt\bigg)^n \bigg]=0,\ \forall n\geq 1.
\end{align}

On the other hand, from Assumption (H1), we have
\begin{align*}
|f^{(m)}(t, Y_t^{(m)}, Z_t^{(m)})-f(t, Y_t, Z_t)|\leq\lambda| Y_t^{(m)}-Y_t|+{\gamma}(1+|Z_t^{(m)}|+|Z_t|)|Z_t^{(m)}-Z_t|+(|f_0(t)|-m)^+.
\end{align*}
Then,  from  H\"{o}lder's inequality, we have  for each $n\geq 1$,
\begin{align*}
&\mathbb{\hat{E}}\bigg[\bigg(\int^T_0|f^{(m)}(t, Y_t^{(m)}, Z_t^{(m)})-f(t, Y_t, Z_t)|dt\bigg)^n\bigg]\\
&\leq 3^{n-1}\lambda^nT^n \mathbb{\hat{E}}\bigg[\sup\limits_{t\in[0,T]}
\left|{Y}^{(m)}_t-{Y}_t\right|^n\bigg]+ 3^{n-1}\mathbb{\hat{E}}\bigg[\bigg(\int^T_0(|f_0(t)|-m)^+dt\bigg)^n\bigg].\\ &\ \ \ \ +3^{n-1}\gamma^n\mathbb{\hat{E}}\bigg[\bigg(\int^T_0(1+|Z_t^{(m)}|+|Z_t|)^2dt\bigg)^n \bigg]^{\frac{1}{2}}\mathbb{\hat{E}}\bigg[\bigg(\int^T_0\big|Z^{(m)}_t-Z_t\big|^2dt\bigg)^n \bigg]^{\frac{1}{2}},
\end{align*}
which converges to $0$ as $m\rightarrow\infty$ in view of Equations
\eqref{myq37} and \eqref{myq511} and Assertion (ii) of Lemma \ref{downward convergence proposition}.
We set
\[
K_t=Y_t-Y_0+\int_0^tf(s,Y_s,Z_s)\, ds-\int_0^tZ_s\, dB_s.
\]
Therefore,  $\mathbb{\hat{E}}\left[\left|K_t-K_t^{(m)}\right|^n\right]\rightarrow 0$ for each $n\geq 1$. Thus, $K$ is a non-increasing $G$-martingale, and
then $(Y,Z,K)\in \mathcal{E}_G(\mathbb{R})\times\mathcal{H}_G(\mathbb{R}^d)\times \mathcal{L}_G(\mathbb{R})$ satisfies Equation \eqref{myq1}. The proof is complete.
\end{proof}

\begin{lemma}\label{myq38}
Let $X_n\in L^1_G(\Omega)$ for $n\geq 1$ such that $
\sup\limits_{n\geq 1}\mathbb{\hat{E}}[|X_n|^{2p}]<\infty \ \text{for some $p\geq 1$}.
$
If $\mathbb{\hat{E}}[|X_n|]$ converges to $0$ as $n \rightarrow \infty$, then
$
\lim\limits_{n\rightarrow\infty}\mathbb{\hat{E}}[|X_n|^p]=0.
$
\end{lemma}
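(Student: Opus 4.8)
The plan is to prove the statement by interpolating the $L^p$-``norm'' of $X_n$ between its $L^1$- and $L^{2p}$-``norms'', exactly as in the classical $L^q$-interpolation inequality, and then to let $n\to\infty$. The only point requiring care is that $\mathbb{\hat{E}}$ is sublinear rather than linear, so I would not invoke a Hölder inequality for $\mathbb{\hat{E}}$ as a black box but rather derive the interpolation bound from the representation $\mathbb{\hat{E}}[\cdot]=\sup_{P\in\mathcal{P}}E^P[\cdot]$ recorded in Theorem \ref{the2.7} and its extension to $\mathcal{B}(\Omega)$-measurable functions.

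Concretely, for $p\geq 1$ I would set $\theta:=\frac{p}{2p-1}\in(0,1]$, so that $1-\theta=\frac{p-1}{2p-1}$, and use the pointwise identity $|X_n|^p=|X_n|^{\theta}\cdot|X_n|^{2p(1-\theta)}$. For each fixed $P\in\mathcal{P}$, the classical Hölder inequality with conjugate exponents $1/\theta$ and $1/(1-\theta)$ gives
\[
E^P[|X_n|^p]\leq E^P[|X_n|]^{\theta}\,E^P[|X_n|^{2p}]^{1-\theta}\leq \mathbb{\hat{E}}[|X_n|]^{\theta}\,\mathbb{\hat{E}}[|X_n|^{2p}]^{1-\theta},
\]
where the second inequality uses $E^P[\cdot]\leq\mathbb{\hat{E}}[\cdot]$ together with the fact that the right-hand side is increasing in both $E^P[|X_n|]$ and $E^P[|X_n|^{2p}]$. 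Taking the supremum over $P\in\mathcal{P}$ on the left then yields the interpolation estimate $\mathbb{\hat{E}}[|X_n|^p]\leq \mathbb{\hat{E}}[|X_n|]^{\theta}\,\mathbb{\hat{E}}[|X_n|^{2p}]^{1-\theta}$.

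To conclude, I would set $M:=\sup_{n\geq1}\mathbb{\hat{E}}[|X_n|^{2p}]<\infty$ and combine the above with the hypothesis $\mathbb{\hat{E}}[|X_n|]\to0$ to obtain $\mathbb{\hat{E}}[|X_n|^p]\leq M^{1-\theta}\,\mathbb{\hat{E}}[|X_n|]^{\theta}\to0$, since $\theta>0$. The case $p=1$ needs no separate treatment, as then $\theta=1$ and the estimate becomes an identity. I do not anticipate a genuine obstacle; the only substantive point is justifying the interpolation inequality for the sublinear expectation, which is immediate once one argues measure-by-measure under the representing family $\mathcal{P}$ rather than manipulating $\mathbb{\hat{E}}$ as though it were linear.
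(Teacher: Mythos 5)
Your proof is correct, but it follows a genuinely different route from the paper's. The paper splits $|X_n|^p$ by truncation at a level $\varepsilon$, bounds the tail via the Cauchy--Schwarz inequality for $\mathbb{\hat{E}}$ combined with Chebyshev's inequality, obtaining
\[
\mathbb{\hat{E}}[|X_n|^p]\leq \varepsilon^p+\varepsilon^{-\frac{1}{2}}\,\mathbb{\hat{E}}[|X_n|^{2p}]^{\frac{1}{2}}\,\mathbb{\hat{E}}[|X_n|]^{\frac{1}{2}},
\]
and then takes $\limsup_{n\to\infty}$ followed by $\varepsilon\to 0$. You instead prove the interpolation inequality $\mathbb{\hat{E}}[|X_n|^p]\leq \mathbb{\hat{E}}[|X_n|]^{\theta}\,\mathbb{\hat{E}}[|X_n|^{2p}]^{1-\theta}$ with $\theta=\frac{p}{2p-1}$, arguing measure-by-measure under the representing family $\mathcal{P}$ (your exponent bookkeeping checks out: $\theta+2p(1-\theta)=p$, and monotonicity in each factor justifies passing from $E^P$ to $\mathbb{\hat{E}}$ on the right before taking the supremum on the left). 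Your approach buys a cleaner one-shot argument with no truncation, no double limit, and an explicit convergence rate $\mathbb{\hat{E}}[|X_n|^p]=O\big(\mathbb{\hat{E}}[|X_n|]^{p/(2p-1)}\big)$, which is in fact sharper than what optimizing the paper's bound in $\varepsilon$ would give. The paper's approach uses only the most elementary tools (Cauchy--Schwarz and a Markov-type bound for the capacity of $\{|X_n|>\varepsilon\}$) and avoids any discussion of general Hölder exponents; also note that Hölder's inequality does hold directly for the sublinear expectation $\mathbb{\hat{E}}$ (the paper invokes it freely elsewhere), so your careful measure-by-measure derivation, while perfectly valid and self-contained, is not strictly necessary as a precaution.
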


\begin{proof}
For any $\varepsilon>0$, we have
\begin{align*}
\mathbb{\hat{E}}[|X_n|^p]\leq \varepsilon^p+\mathbb{\hat{E}}[|X_n|^p\mathbf{1}_{\{|X_n|>\varepsilon\}}]
\leq \varepsilon^p+{\varepsilon}^{-{\frac{1}{2}}}\mathbb{\hat{E}}[|X_n|^{2p}]^{\frac{1}{2}}\mathbb{\hat{E}}[|X_n|]^{\frac{1}{2}}.
\end{align*}
Therefore,  we have
$$
\limsup\limits_{n\rightarrow\infty}\mathbb{\hat{E}}[|X_n|^p]\leq \varepsilon^p.
$$
 Sending $\varepsilon\rightarrow 0$, we complete the proof.
\end{proof}

\section{Multi-dimensional quadratic $G$-BSDEs}

In this section, we  consider multi-dimensional quadratic $G$-BSDEs on time interval $[0,T]$:
\begin{align}\label{myq1213}
Y_{t}=\xi+\int_{t}^{T}f(s,Y_{s},Z_{s})ds-\int_{t}^{T}Z_{s}dB_{s}-(K_{T}-K_{t}),
\end{align}
	where the generators
\[
 f(t,\omega,y,z)=(f^1(t,\omega,y,z),\cdots,f^n(t,\omega,y,z))^{\top}: [0,T]\times\Omega\times
\mathbb{R}^n\times\mathbb{R}^{n\times d}\rightarrow\mathbb{R}^n.
\]

For sake of convenience,  denote by $y^l$ and $z^l$ the $l$-th component of $y$ and the $l$-th row of $z$ for each argument $(y,z)\in\mathbb{R}^n\times\mathbb{R}^{n\times d}$, respectively. Consider the following assumptions.

\begin{description}
	\item[(B1)] For each $l=1,\cdots,n$, $f^l(t,\omega,y,z)$ depends only  on the $l$-th row $z^l$  of the
	argument $z$ and is convex or concave in $z^l$.
\item[(B2)] For each  $(t,\omega)\in[0,T]\times\Omega$ and $(y,z),(\bar{y},\bar{z})\in \mathbb{R}^n\times \mathbb{R}^{n\times d}$,
	\[
|f(t,\omega,0,0)|\leq \alpha_t(\omega)\ \text{and}\	|f(t,\omega,y,z)-f(t,\omega,\bar{y},\bar{z})|\leq \lambda|y-\bar{y}|+{\gamma}(1+|z|+|\bar{z}|)|z-\bar{z}|.
\]
\item[(B3)] There exists a modulus of continuity $w:[0,\infty)\rightarrow[0,\infty)$  such that for each $(y,z)\in \mathbb{R}^n\times \mathbb{R}^{n\times d}$ and $(t,\omega), (\bar{t},\bar{\omega})
\in [0,T]\times \Omega$,
\[
|f(t,\omega,y,z)-f(\bar{t},\bar{\omega},y,z)|\leq w(|t-\bar{t}|+\|\omega-\bar{\omega}\|).
\]
\item[(B4)] Both the terminal value $\xi\in L^1_G(\Omega;\mathbb{R}^n)$  and $\int^T_0\alpha_tdt$ have  exponential moments of arbitrary order, i.e.,
\[
\mathbb{\hat{E}}\bigg[\exp\bigg\{p|\xi|+p\int^T_0\alpha_tdt\bigg\}\bigg]<\infty\quad  \text{for any $p\geq 1$.}
\]	
\end{description}

\begin{lemma}\label{myq79}
	Assume that all Assumptions (B1)-(B4) hold and $U\in\mathcal{E}_G(\mathbb{R}^n)$. Then, the following multi-dimensional decoupled $G$-BSDE on $[0,T]$:
	\begin{align*}
	Y^{l}_{t}=\xi^l+\int_{t}^{T}f^l(s,U_{s},Z^{l}_{s})ds-\int_{t}^{T}Z^{l}_{s}dB_{s}-(K^{l}_{T}-K^{l}_{t}),\ \forall l=1,\ldots,n,
	\end{align*}
	admits a unique solution $(Y,Z,K)\in \mathcal{E}_G(\mathbb{R}^n)\times\mathcal{H}_G(\mathbb{R}^{n\times d})\times\mathcal{L}_G(\mathbb{R}^n)$.
\end{lemma}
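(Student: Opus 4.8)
The plan is to exploit the fact that the system is \emph{decoupled}: with $U$ frozen, the $l$-th equation involves only the given process $U$ and the single row $Z^l$, so it is in truth a one-dimensional quadratic $G$-BSDE to which Theorem \ref{myq66} applies. Concretely, for each fixed $l\in\{1,\dots,n\}$ I would introduce the scalar generator
\[
\tilde f^l(t,\omega,y,z):=f^l(t,\omega,U_t(\omega),z),\qquad (t,\omega,y,z)\in[0,T]\times\Omega\times\mathbb{R}\times\mathbb{R}^d,
\]
which by (B1) is well defined (it depends only on the $l$-th row, here renamed $z\in\mathbb{R}^d$) and is independent of $y$. The $l$-th equation then reads
\[
Y^l_t=\xi^l+\int_t^T \tilde f^l(s,Y^l_s,Z^l_s)\,ds-\int_t^T Z^l_s\,dB_s-(K^l_T-K^l_t),
\]
a genuine scalar $G$-BSDE with data $(\xi^l,\tilde f^l)$.

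The substance is to verify that $(\xi^l,\tilde f^l)$ satisfies (H1)--(H4). For (H1): embedding a row $z\in\mathbb{R}^d$ as a matrix with all other rows zero and using that $f^l$ sees only its own row, (B2) gives $|\tilde f^l(t,z)-\tilde f^l(t,\bar z)|\le\gamma(1+|z|+|\bar z|)|z-\bar z|$, while $|\tilde f^l(t,0)|\le|f^l(t,U_t,0)|\le\alpha_t+\lambda|U_t|=:\alpha'_t$ follows from the $y$-Lipschitz part of (B2) together with $|f(t,0,0)|\le\alpha_t$; the $y$-Lipschitz clause is vacuous since $\tilde f^l$ is $y$-free. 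Assumption (H3) is immediate from the convexity/concavity in (B1). For (H4) I would bound $\int_0^T\alpha'_s\,ds\le\int_0^T\alpha_s\,ds+\lambda T\sup_{s}|U_s|$ and apply the generalized H\"older inequality to the three factors $e^{p|\xi^l|}$, $e^{p\int_0^T\alpha_s\,ds}$ and $e^{p\lambda T\sup_s|U_s|}$, whose exponential moments of every order are finite by (B4) and by $U\in\mathcal{E}_G(\mathbb{R}^n)$.

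The delicate point, which I expect to be the main obstacle, is (H2). Splitting
\[
\tilde f^l(t,\omega,z)-\tilde f^l(\bar t,\bar\omega,z)=\bigl[f^l(t,\omega,U_t(\omega),z)-f^l(\bar t,\bar\omega,U_t(\omega),z)\bigr]+\bigl[f^l(\bar t,\bar\omega,U_t(\omega),z)-f^l(\bar t,\bar\omega,U_{\bar t}(\bar\omega),z)\bigr],
\]
the first bracket is controlled by $w(|t-\bar t|+\|\omega-\bar\omega\|)$ via (B3), but the second is bounded only by $\lambda|U_t(\omega)-U_{\bar t}(\bar\omega)|$ and hence demands a modulus of continuity for the frozen $U$, which a general element of $\mathcal{E}_G(\mathbb{R}^n)$ need not possess. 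I would resolve this by approximation: replace $U$ by cylinder processes $U^{(k)}\in S_G^0(0,T;\mathbb{R}^n)$ (of the form $h(t,B_{t_1\wedge t},\dots)$ with $h$ Lipschitz), which do carry a deterministic modulus of continuity, so that $\tilde f^{l,k}(t,\omega,z):=f^l(t,\omega,U^{(k)}_t,z)$ satisfies (H1)--(H4) and Theorem \ref{myq66} produces a unique $(Y^{l,k},Z^{l,k},K^{l,k})$. Choosing the approximation so that $\sup_s|U^{(k)}_s-U_s|\to0$ in all exponential-moment norms, the a priori bounds of Lemma \ref{myq299} keep $(Z^{l,k},K^{l,k})$ uniformly bounded in every $L^n_G$, and a stability argument parallel to the convergence step in the proof of Theorem \ref{myq66} — using Lemma \ref{downward convergence proposition}, Lemma \ref{myq38} and the BDG inequality of Lemma \ref{myw90156} — lets me pass to the limit and recover a solution $(Y^l,Z^l,K^l)\in\mathcal{E}_G(\mathbb{R})\times\mathcal{H}_G(\mathbb{R}^d)\times\mathcal{L}_G(\mathbb{R})$ of the original $l$-th equation. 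Assembling the components yields $(Y,Z,K)\in\mathcal{E}_G(\mathbb{R}^n)\times\mathcal{H}_G(\mathbb{R}^{n\times d})\times\mathcal{L}_G(\mathbb{R}^n)$, and uniqueness is inherited component-by-component from the uniqueness part of Theorem \ref{myq66}, since with $U$ fixed the $n$ equations do not interact.
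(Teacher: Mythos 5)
Your main line coincides exactly with the paper's proof. The paper's entire argument is: (i) the growth bound $|f^l(t,U_t,z^l)|\le \alpha_t+\frac{\gamma}{2}+\lambda|U_t|+\frac{3\gamma}{2}|z^l|^2$ from (B1)--(B3); (ii) the observation that, by (B4), $U\in\mathcal{E}_G(\mathbb{R}^n)$ and H\"older's inequality, $\mathbb{\hat{E}}\big[\exp\big\{p|\xi^l|+p\int_0^T(\alpha_t+\lambda|U_t|)\,dt\big\}\big]<\infty$ for every $p\ge 1$; and (iii) a citation of Theorem \ref{myq66} with $\alpha_t'=\alpha_t+\lambda|U_t|$. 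It performs no approximation of $U$ and never mentions (H2). So up to the point where you raise the ``(H2) obstacle'', you and the paper agree word for word; and your observation that the frozen generator need not satisfy (H2) is correct (already for $f^l(t,\omega,y,z)=\lambda y$, which satisfies (B1)--(B3), the frozen generator is $\lambda U_t(\omega)$, and (H2) would force a deterministic joint modulus of continuity on $U$ that a generic element of $\mathcal{E}_G$ does not possess). This is a step the paper passes over in silence.

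The problem is that your repair fails at its central claim, so the existence half of your argument does not close. Composing $f^l$ with a cylinder process $U^{(k)}_t=h(t,B_{t_1\wedge t},\dots,B_{t_n\wedge t})\in S_G^0(0,T;\mathbb{R}^n)$ does \emph{not} restore (H2): taking $\bar\omega=\omega$ in (H2), one would need $|h(t,\omega_{t_1\wedge t},\dots,\omega_{t_n\wedge t})-h(\bar t,\omega_{t_1\wedge\bar t},\dots,\omega_{t_n\wedge\bar t})|\le w(|t-\bar t|)$ uniformly over $\omega$; since the Lipschitz function $h$ is evaluated at the \emph{moving} points $\omega_{t_i\wedge t}$, this requires an equicontinuity of all paths in $\Omega$ which does not exist. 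Hence $\tilde f^{l,k}$ violates (H2) for precisely the reason you identified for $\tilde f^l$, Theorem \ref{myq66} cannot be invoked for it, and the subsequent stability argument has nothing to start from. (Your uniqueness claim survives: the comparison Lemma \ref{compare theorem} and the a priori estimate Lemma \ref{myq2} use only (H1), (H3), (H4), never (H2).) A version of your idea that does work is to approximate $U$ by \emph{step} processes $\sum_i\xi_{t_i}\mathbf{1}_{[t_i,t_{i+1})}$ with $\xi_{t_i}\in L_{ip}(\Omega_{t_i};\mathbb{R}^n)$, and to solve backward, interval by interval: on $[t_i,t_{i+1}]$ the frozen value $\xi_{t_i}$ is a bounded Lipschitz function of the path at finitely many \emph{fixed} times, so $f^l(t,\omega,\xi_{t_i}(\omega),z)$ satisfies (H2) with modulus $w(r)+Cr$ and (H1) with $\alpha_t+\lambda\sup|\xi_{t_i}|$, and Theorem \ref{myq66} applies on that interval; one then pastes the solutions and runs your $\theta$-method stability estimate, with $\sup_t|U^{(k)}_t-U_t|$ entering the error term, to pass to the limit. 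Either this, or one must extend Theorem \ref{myq66} itself to the class of generators obtained by freezing $\mathcal{E}_G$-processes --- which is exactly the statement that the paper's one-line citation tacitly assumes.
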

\begin{proof}
	From Assumptions (B1)-(B3), we have that for $l=1,\ldots,n$,
	\begin{align}\label{myq82}
	|f^l(t,U_{t},z^l)|\leq \alpha_t+\frac{\gamma }{2}+\lambda|U_{t}|+\frac{3\gamma}{2}|z^l|^2.
	\end{align}
	From Assumption (B4) and  H\"{o}lder's inequality, we have	
	\[
	\mathbb{\hat{E}}\bigg[\exp\bigg\{p|\xi^l|+p\int^T_0\left(\alpha_t+\lambda |U_t|\right)dt\bigg\}\bigg]<\infty\ \text{for each $p\geq 1$.}
	\]
	Consequently, applying Theorem \ref{myq66}, we have  the desired result.
\end{proof}

Now, following the idea of \cite{FHT2}, we  study the well-posedness of solutions to multi-dimensional quadratic $G$-BSDE \eqref{myq1213} of diagonally quadratic
generators.

\begin{theorem}\label{myq79}
	Assume that all Assumptions (B1)-(B4) are satisfied. Then, the  multi-dimensional  $G$-BSDE \eqref{myq1213}	admits a unique solution $(Y,Z,K)\in \mathcal{E}_G(\mathbb{R}^n)\times\mathcal{H}_G(\mathbb{R}^{n\times d})\times\mathcal{L}_G(\mathbb{R}^n)$.
\end{theorem}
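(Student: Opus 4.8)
The plan is to solve the coupled system \eqref{myq1213} by a Picard-type iteration in the $y$-variable, building on the decoupled result established just above (which, for any frozen input $U\in\mathcal{E}_G(\mathbb{R}^n)$, produces a solution of the system in which $U$ replaces $Y$ inside every generator). Concretely, I would set $Y^{(0)}=0$ and let $Y^{(k+1)}$ be the unique solution of the decoupled $G$-BSDE with the $y$-slot frozen at $Y^{(k)}$; by that lemma each iterate lies in $\mathcal{E}_G(\mathbb{R}^n)\times\mathcal{H}_G(\mathbb{R}^{n\times d})\times\mathcal{L}_G(\mathbb{R}^n)$. The whole difficulty is then transferred to showing that this sequence converges.

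First I would derive uniform a priori estimates. Since $f^l$ is diagonally quadratic, each component $Y^{(k+1),l}$ solves a \emph{scalar} quadratic $G$-BSDE whose generator satisfies (H5) with $\beta_t=\alpha_t+\tfrac{\gamma}{2}+\lambda|Y^{(k)}_t|$ and $\kappa=3\gamma$ (cf. \eqref{myq82}). Applying the scalar estimate of Lemma \ref{myq2} componentwise, together with the maximal inequality of Remark \ref{myq7}, bounds $\mathbb{\hat{E}}[\exp\{p\sup_t|Y^{(k+1)}_t|\}]$ in terms of exponential moments of $\xi$, of $\int_0^T\alpha_t\,dt$, and of $\sup_t|Y^{(k)}_t|$. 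Because the feedback enters only through the Lipschitz term $\lambda\int_t^T|Y^{(k)}_s|\,ds$, this self-referential inequality can be closed on a short interval $[T-\delta,T]$ with $\delta$ so small that the induced contraction factor for the moment bounds is strictly below one; this yields $\sup_k\mathbb{\hat{E}}[\exp\{p\sup_{[T-\delta,T]}|Y^{(k)}_t|\}]<\infty$ for every $p\ge1$, and then Lemma \ref{myq299} upgrades it to uniform bounds on $(\int_0^T|Z^{(k)}_t|^2dt)^n$ and $|K^{(k)}_T|^n$.

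The heart of the proof, and the main obstacle, is the convergence step, where the conventional linearization fails because $Z$ need not be BMO-bounded. I would therefore run the $\theta$-method componentwise, exactly as in the proof of Theorem \ref{myq66}: for $\theta\in(0,1)$ I estimate $\delta_\theta Y^{(k,l)}=(Y^{(k+1),l}-\theta Y^{(k),l})/(1-\theta)$ and its twin, using convexity (resp. concavity) of $f^l$ in $z^l$ to absorb the quadratic term, the only new ingredient being the Lipschitz discrepancy $\lambda|Y^{(k)}-Y^{(k-1)}|$ coming from the frozen $y$-slot. On $[T-\delta,T]$ this produces an estimate of the form $\mathbb{\hat{E}}[\sup_{[T-\delta,T]}|Y^{(k+1)}-Y^{(k)}|^n]\le C\,\varepsilon(\delta)\,\mathbb{\hat{E}}[\sup_{[T-\delta,T]}|Y^{(k)}-Y^{(k-1)}|^n]$ with $\varepsilon(\delta)<1$, so that $\{Y^{(k)}\}$ is Cauchy in every $S^n_G$; the limiting $Z$ and $K$ are then identified precisely as in Step 3 of Theorem \ref{myq66}, using Lemma \ref{myq38} and the convergence statements of Lemma \ref{downward convergence proposition}.

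Finally I would patch and conclude uniqueness. The endpoint $Y_{T-\delta}$ inherits exponential moments of all orders from the uniform estimate, so the same construction solves the system on $[T-2\delta,T-\delta]$, and after finitely many steps one covers $[0,T]$ and concatenates the pieces into a global solution in $\mathcal{E}_G(\mathbb{R}^n)\times\mathcal{H}_G(\mathbb{R}^{n\times d})\times\mathcal{L}_G(\mathbb{R}^n)$. Uniqueness follows by applying the same componentwise $\theta$-argument to the difference of two solutions, in the spirit of Lemma \ref{compare theorem}, to get $Y=\bar{Y}$ on each subinterval; then $G$-It\^o's formula applied to $|Y-\bar{Y}|^2$ forces $Z=\bar{Z}$ and hence $K=\bar{K}$. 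The delicate point throughout is making the $\theta$-method deliver a genuine contraction, which is why the short-time localization is needed, and carefully tracking how the exponential-moment bounds propagate through the Lipschitz coupling.
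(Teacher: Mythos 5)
Your proposal follows essentially the same route as the paper: the same Picard iteration on the frozen $y$-slot via the decoupled lemma, uniform exponential-moment estimates from Lemma \ref{myq2} and Remark \ref{myq7} closed on subintervals of length of order $(n\lambda)^{-1}$, a componentwise $\theta$-method (in the spirit of Theorem \ref{myq66} and of Fan--Hu--Tang) for the convergence and uniqueness, and concatenation over finitely many subintervals. The only cosmetic difference is that you phrase the closure of the self-referential moment bound as a contraction with factor below one, whereas the paper iterates the Cauchy--Schwarz recursion $u_m\leq\sqrt{\underline{A}(p)}\,u_{m-1}^{1/2}$, but this is the same mechanism.
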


\begin{proof}
With the help of Lemma \ref{myq79}, the iterative method in the proof of \cite[Theorem 2.8]{FHT2} still works here. Indeed,
 we firstly set $Y^{(0)}=0$, and  define recursively the sequence of stochastic processes $(Y^{(m)})_{m=1}^{\infty}$ through solution of the following $G$-BSDE on $[0,T]$:
\begin{align}\label{myq78}
Y^{(m);l}_{t}=\xi^l+\int_{t}^{T}f^l(s,Y^{(m-1)}_{s},Z^{(m);l}_{s})ds-\int_{t}^{T}Z^{(m);l}_{s}dB_{s}-(K^{(m);l}_{T}-K^{(m);l}_{t}),\ \forall l=1,\ldots,n.
\end{align}
From Lemma \ref{myq79}, we  get that $(Y^{(m)},Z^{(m)},K^{(m)})\in \mathcal{E}_G(\mathbb{R}^n)\times\mathcal{H}_G(\mathbb{R}^{n\times d})\times\mathcal{L}_G(\mathbb{R}^n)$. Next, we use Lemma \ref{myq2} to establish a uniform estimate on $(Y^{(m)},Z^{(m)},K^{(m)})$, and then utilize a $\theta$-method to get the convergence of $Y^{(m)}$ and the uniqueness without  any further difficulty.
For the reader's convenience, we  sketch the proof.

In view of \eqref{myq82} and Assertion (i) of Lemma \ref{myq2} (taking  $\beta_t=\alpha_t+\frac{\gamma}{2}+\lambda |Y^{(m-1)}_t|$, $\lambda=0$  and $\kappa=3\gamma$), we have  that  for any $p\geq 1$ and $l=1,\ldots,n$,
\begin{align*}	
	\begin{split}
	\exp\left\{{3p\gamma\tilde{\sigma}^2}\left|Y^{(m);l}_t\right|\right\}\leq   \mathbb{\hat{E}}_t\bigg[\exp\bigg\{{3p\gamma\tilde{\sigma}^2}\bigg(|\xi|+\int^T_t\left(\alpha_s+\frac{\gamma }{2}+\lambda|Y^{(m-1)}_s|\right)ds\bigg)\bigg\}	\bigg],\ \forall m\geq 1.
	\end{split}
	\end{align*}
	From Jensen's inequality, we have
\begin{align*}	
\begin{split}
\exp\left\{{3p\gamma\tilde{\sigma}^2}\left|Y^{(m)}_t\right|\right\}\leq   \mathbb{\hat{E}}_t\bigg[\exp\bigg\{{3np\gamma\tilde{\sigma}^2}\bigg(|\xi|+\int^T_t\left(\alpha_s+\frac{\gamma }{2}+\lambda|Y^{(m-1)}_s|\right)ds\bigg)\bigg\}	 \bigg],\ \forall m\geq 1.
\end{split}
\end{align*}
In spirit of  Remark \ref{myq7}, we get that for any $p\geq 1$, $m\geq 1$ and $t\in[0,T]$,
{\small \begin{align}
\label{myq83}
\begin{split}
\mathbb{\hat{E}}\bigg[\exp\bigg\{{3p\gamma\tilde{\sigma}^2}\sup\limits_{t\leq s\leq T}|Y^{(m)}_s|\bigg\}
\bigg] &\leq \hat{A}(G)  \mathbb{\hat{E}}\bigg[\exp\bigg\{{6np\gamma\tilde{\sigma}^2}\bigg(|\xi|+\int^T_t\left(\alpha_s+\frac{\gamma }{2}+\lambda |Y^{(m-1)}_s|\right)ds\bigg)\bigg\}	\bigg]\\
&\leq  \sqrt{\underline{A}(p)} \mathbb{\hat{E}}\bigg[\exp\bigg\{{12np\gamma\tilde{\sigma}^2}\lambda(T-t) \sup\limits_{t\leq s\leq T}|Y^{(m-1)}_s|\bigg\}	\bigg]^{\frac{1}{2}}
\end{split}
\end{align}	}
with
\[
\underline{A}(p)= |\hat{A}(G)|^2\mathbb{\hat{E}}\bigg[\exp\bigg\{{12np\gamma\tilde{\sigma}^2}\bigg(|\xi|+\int^T_t\big(\alpha_s+\frac{\gamma }{2}\big)ds\bigg)\bigg\}	\bigg]<\infty.
\]

Define \begin{align*}
\mu:=
\begin{cases} 4n\lambda T, \  &\text{if $4n\lambda T$ is an integer};\\
 [4n\lambda T]+1, \ &\text{otherwise}.
\end{cases}
\end{align*}
If $\mu=1$, it follows from \eqref{myq83} that for each $p\geq 1$ and $m\geq 1$
\begin{align*}
\begin{split}
\mathbb{\hat{E}}\bigg[\exp\bigg\{{3p\gamma\tilde{\sigma}^2}\sup\limits_{0\leq s\leq T}|Y^{(m)}_s|\bigg\}
\bigg] \leq (\underline{A}(p))^{\frac{1}{2}} \mathbb{\hat{E}}\bigg[\exp\bigg\{{3p\gamma\tilde{\sigma}^2} \sup\limits_{0\leq s\leq T}|Y^{(m-1)}_s|\bigg\}	 \bigg]^{\frac{1}{2}},
\end{split}
\end{align*}	
which implies that
\begin{align}\label{myq698}
\begin{split}
&\mathbb{\hat{E}}\bigg[\exp\bigg\{{3p\gamma\tilde{\sigma}^2}\sup\limits_{0\leq s\leq T}|Y^{(m)}_s|\bigg\}
\bigg] \leq (\underline{A}(p))^{\frac{1}{2}+\frac{1}{4}+\frac{1}{2^m}} \mathbb{\hat{E}}\bigg[\exp\bigg\{{3p\gamma\tilde{\sigma}^2} \sup\limits_{0\leq s\leq T}|Y^{(0)}_s|\bigg\}	\bigg]^{\frac{1}{2^m}}\\
& \ \ \ \ \ \ \leq\underline{A}(p)\leq |\hat{A}(G)|^2\mathbb{\hat{E}}\bigg[\exp\bigg\{{24np\gamma\tilde{\sigma}^2}|\xi|\bigg\}	 \bigg]\mathbb{\hat{E}}\bigg[\exp\bigg\{{24np\gamma\tilde{\sigma}^2}\int^T_0\big(\alpha_s+\frac{\gamma }{2}\big)ds\bigg\}	\bigg].
\end{split}
\end{align}
If $\mu=2$, proceeding identically as in the above, we have for any $p\geq 1$,
 \begin{align}\label{myq699}
 \begin{split}
&\mathbb{\hat{E}}\bigg[\exp\bigg\{{3p\gamma\tilde{\sigma}^2}\sup\limits_{T-(4n\lambda)^{-1}\leq s\leq T}|Y^{(m)}_s|\bigg\}
\bigg] \\
&\leq |\hat{A}(G)|^2\mathbb{\hat{E}}\bigg[\exp\bigg\{{24np\gamma\tilde{\sigma}^2}|\xi|\bigg\}	 \bigg]\mathbb{\hat{E}}\bigg[\exp\bigg\{{24np\gamma\tilde{\sigma}^2}\int^T_0\big(\alpha_s+\frac{\gamma }{2}\big)ds\bigg\}	\bigg].
\end{split}
\end{align}

Then, consider the following $G$-BSDEs on time interval $[0,T-(4n\lambda)^{-1}]$ for each $m\geq 1$:
{\small \begin{align*}
Y^{(m);l}_{t}=Y^{(m);l}_{T-(4n\lambda)^{-1}}+\int_{t}^{T-(4n\lambda)^{-1}}f^l(s,Y^{(m-1)}_{s},Z^{(m);l}_{s})ds-\int_{t}^{T-(4n\lambda)^{-1}}Z^{(m);l}_{s}dB_{s}-(K^{(m);l}_{T-(4n\lambda)^{-1}}-K^{(m);l}_{t}).
\end{align*}
}
Proceeding identically as to derive \eqref{myq698}, we have
{\small \begin{align*}
	\begin{split}
	&\mathbb{\hat{E}}\bigg[\exp\bigg\{{3p\gamma\tilde{\sigma}^2}\sup\limits_{0\leq s\leq T-(4n\lambda)^{-1}}|Y^{(m)}_s|\bigg\}
	\bigg]\\ &\leq |\hat{A}(G)|^2\mathbb{\hat{E}}\bigg[\exp\bigg\{{24np\gamma\tilde{\sigma}^2}|Y^{(m)}_{T-(4n\lambda)^{-1}}|\bigg\}	 \bigg]\mathbb{\hat{E}}\bigg[\exp\bigg\{{24np\gamma\tilde{\sigma}^2}\int^T_0\big(\alpha_s+\frac{\gamma }{2}\big)ds\bigg\}	\bigg]\\
	&\leq |\hat{A}(G)|^4\mathbb{\hat{E}}\bigg[\exp\bigg\{{192n^2p\gamma\tilde{\sigma}^2}|\xi|\bigg\}	 \bigg]\mathbb{\hat{E}}\bigg[\exp\bigg\{{384n^2p\gamma\tilde{\sigma}^2}\int^T_0\big(\alpha_s+\frac{\gamma }{2}\big)ds\bigg\}	 \bigg],
	\end{split}
	\end{align*}
}where we have used \eqref{myq699} in the last inequality.
From the last two inequalities and H\"{o}lder's inequality, we get
{\small \begin{align*}
	\begin{split}
	\mathbb{\hat{E}}\bigg[\exp\bigg\{{3p\gamma\tilde{\sigma}^2}\sup\limits_{0\leq s\leq T}|Y^{(m)}_s|\bigg\}
	\bigg]
	&\leq |\hat{A}(G)|^3\mathbb{\hat{E}}\bigg[\exp\bigg\{{384n^2p\gamma\tilde{\sigma}^2}|\xi|\bigg\}	 \bigg]\mathbb{\hat{E}}\bigg[\exp\bigg\{{768n^2p\gamma\tilde{\sigma}^2}\int^T_0\big(\alpha_s+\frac{\gamma }{2}\big)ds\bigg\}	 \bigg].
	\end{split}
	\end{align*}
}
Iterating the above procedure $\mu$ times, we have
{\small \begin{align}\label{myq811}
	\begin{split}
	&\mathbb{\hat{E}}\bigg[\exp\bigg\{{3p\gamma\tilde{\sigma}^2}\sup\limits_{0\leq s\leq T}|Y^{(m)}_s|\bigg\}
	\bigg]\\
	&\leq |\hat{A}(G)|^{\mu+1}\mathbb{\hat{E}}\bigg[\exp\bigg\{{24n(16n)^{\mu-1}p\gamma\tilde{\sigma}^2}|\xi|\bigg\}	 \bigg]\mathbb{\hat{E}}\bigg[\exp\bigg\{{24n(32n)^{\mu-1}p\gamma\tilde{\sigma}^2}\int^T_0\big(\alpha_s+\frac{\gamma }{2}\big)ds\bigg\}	\bigg].
	\end{split}
	\end{align}
}
Furthermore, in view of \eqref{myq82}, using Lemma \ref {myq299}, we see that for  $p\geq 1$ and $l=1,\ldots,n$,	
 \begin{align*}&\mathbb{\hat{E}}\left[\bigg(\int^T_0|Z_t^{(m);l}|^2dt\bigg)^p+|K^{(m);l}_T|^p \right]\\
 &\leq \tilde{A}(p)\mathbb{\hat{E}}\bigg[\exp\bigg\{12\gamma\tilde{\sigma}^2n\sup\limits_{t\in[0,T]}|Y^{(m)}_t|+2\lambda n T\sup\limits_{t\in[0,T]}|Y^{(m-1)}_t|+2n\int^T_0\big(\alpha_t+\frac{\gamma}{2}\big)dt\bigg\}\bigg],
\end{align*}
which is uniformly bounded with respect to $m$.

Finally, in view of Assertion (ii) of Lemma \ref{myq2}, the proof of Theorem \ref{myq66} and the above derivation of \eqref{myq811}, proceeding  identically to  that of  \cite[Theorem 2.8]{FHT2}, we complete the proof.
\end{proof}

\appendix
\renewcommand\thesection{\normalsize Appendix }
\section{ }

\renewcommand\thesection{A}
\normalsize

In this appendix, we introduce the extended conditional $G$-expectation, which are needed in this paper.
 Consider the following two spaces of random variables
\begin{align*}
\mathbb{L}^1(\Omega):=\{X\in \mathcal{B}(\Omega):\hat{\mathbb{E}}[|X|]<\infty\},\text{and} \
L_G^{1^*}(\Omega):=\{X\in\mathbb{L}^1(\Omega):\exists X_n\in L_G^1(\Omega) \textrm{ such that } X_n\downarrow X\}.
\end{align*}
 Then we could extend the conditional $G$-expectation to the space $L_G^{1^*}(\Omega_t)$:
 \[
 \mathbb{\hat{E}}_t[X]=\lim\limits_{n\rightarrow\infty}\mathbb{\hat{E}}_t[X_n],
 \]
 which does not depend on  the choice of approximating sequences, see \cite{HP13} for more details.

\begin{lemma}\emph{(Proposition 35 in \cite{HP13})}
Assume that $X\in L_G^{1^*}(\Omega)$. Then,
for any $P\in\mathcal{P}$
\[
\mathbb{\hat{E}}_t[X]=\esssup\limits_{\bar{P}\in\mathcal{P}(t,P)}E^{\bar{P}}_t[X]\  \text{$P$-a.s. for any  $ t\geq 0$},
\]
where $\mathcal{P}(t,P)=\{\bar{P}\in\mathcal{P}|\ \bar{P}=P \text{ on $Lip(\Omega_t)$} \}$.\label{myq9}
\end{lemma}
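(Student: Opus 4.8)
The plan is to prove the representation first for the ordinary class $X\in L_G^1(\Omega)$ and then to bootstrap to the extended class $L_G^{1^*}(\Omega)$ using the very downward approximation that defines $\mathbb{\hat{E}}_t$ on $L_G^{1^*}(\Omega)$. For $Y\in L_G^1(\Omega)$ the identity $\mathbb{\hat{E}}_t[Y]=\esssup_{\bar P\in\mathcal P(t,P)}E^{\bar P}_t[Y]$ is the conditional counterpart of Theorem \ref{the2.7}, and I would take it as the base case (it is the content available in \cite{DHP11,HP13}), recalling its two ingredients: the domination $E^{\bar P}_t[Y]\le\mathbb{\hat{E}}_t[Y]$ $P$-a.s. for every $\bar P\in\mathcal P(t,P)$, which gives ``$\ge$''; and the upward directedness of the family $\{E^{\bar P}_t[Y]:\bar P\in\mathcal P(t,P)\}$, coming from the stability of $\mathcal P$ under pasting at the deterministic time $t$, which gives ``$\le$'' after testing against $\mathcal B(\Omega_t)$-indicators and invoking the tower property.

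Granting the base case, I would fix $X\in L_G^{1^*}(\Omega)$, choose $X_n\in L_G^1(\Omega)$ with $X_n\downarrow X$, so that $\mathbb{\hat{E}}_t[X]=\lim_n\mathbb{\hat{E}}_t[X_n]$ by definition, and write $\phi_n:=\mathbb{\hat{E}}_t[X_n]=\esssup_{\bar P}E^{\bar P}_t[X_n]$ and $\phi:=\esssup_{\bar P}E^{\bar P}_t[X]$. Since $X_n\ge X$, one has $\phi_n\ge\phi$ for all $n$, which yields the easy half $\mathbb{\hat{E}}_t[X]=\lim_n\phi_n\ge\phi$ at once. The entire problem thus reduces to the reverse inequality $\lim_n\phi_n\le\phi$, i.e. to showing that the decreasing limit of the essential suprema does not exceed the essential supremum of the limits.

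I would obtain this by testing against an arbitrary $A\in\mathcal B(\Omega_t)$. Using the directedness and the tower property exactly as in the base case, and using that $\bar P$ and $P$ agree on $\mathcal B(\Omega_t)$ to move $\mathbf 1_A$ freely in and out of the conditional expectation, I get $E^P[\mathbf 1_A\phi_n]=\sup_{\bar P\in\mathcal P(t,P)}E^{\bar P}[\mathbf 1_A X_n]$ and likewise $E^P[\mathbf 1_A\phi]=\sup_{\bar P\in\mathcal P(t,P)}E^{\bar P}[\mathbf 1_A X]$. Passing $n\to\infty$ on the left-hand integrals is routine monotone convergence (with $\phi_1\in L^1(P)$), so everything comes down to the downward continuity of the \emph{fiber capacity} $\sup_{\bar P\in\mathcal P(t,P)}E^{\bar P}[\mathbf 1_A\,\cdot\,]$ along $X_n\downarrow X$. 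This is the conditional analogue of Assertion (ii) of Lemma \ref{downward convergence proposition}, and it is precisely the main obstacle: it is not a pointwise fact but a Dini/Choquet-type statement, and it requires the weak compactness of the fiber $\mathcal P(t,P)$ together with enough regularity of the approximants $X_n\in L_G^1(\Omega)$ (the genuine content of the proposition sits here, and the technical care is in legitimizing the test sets $A$). Once it is secured, $E^P[\mathbf 1_A\lim_n\phi_n]=E^P[\mathbf 1_A\phi]$ for every $A\in\mathcal B(\Omega_t)$; combined with $\lim_n\phi_n\ge\phi$ this forces $\lim_n\phi_n=\phi$ $P$-a.s., which is the assertion. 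I would close by noting that, since the limit is thereby identified with the intrinsic quantity $\phi$, it does not depend on the chosen approximating sequence.
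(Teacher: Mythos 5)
Note first that the paper contains no proof of this lemma: it is imported wholesale as Proposition 35 of \cite{HP13}, so your attempt can only be measured on its own terms. Your architecture is the right one — take the representation on $L_G^1(\Omega)$ as the base case, pass to the defining downward limit on $L_G^{1^*}(\Omega)$, get the inequality $\lim_n\phi_n\geq\phi$ for free from monotonicity — and your identity $E^P[\mathbf{1}_A\phi_n]=\sup_{\bar P\in\mathcal{P}(t,P)}E^{\bar P}[\mathbf{1}_AX_n]$ (directedness by pasting, tower property, agreement of $\bar P$ and $P$ on $\mathcal{B}(\Omega_t)$) is correct. The genuine gap is that you then deposit the entire difficulty into the unproven ``downward continuity of the fiber capacity'' $\lim_n\sup_{\bar P}E^{\bar P}[\mathbf{1}_AX_n]=\sup_{\bar P}E^{\bar P}[\mathbf{1}_AX]$, and you say so yourself (``the genuine content of the proposition sits here''). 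Moreover, in the generality you state it this step is both out of reach of the tools you cite and essentially circular: since $\sup_{\bar P}E^{\bar P}[\mathbf{1}_AX_n]=E^P[\mathbf{1}_A\phi_n]$ and $\sup_{\bar P}E^{\bar P}[\mathbf{1}_AX]=E^P[\mathbf{1}_A\phi]$, the claimed continuity is (given $\lim_n\phi_n\geq\phi$) equivalent to $\lim_n\phi_n=\phi$ $P$-a.e.\ on $A$, i.e.\ to the statement being proven, localized to $A$. And it cannot be deduced from Assertion (ii) of Lemma \ref{downward convergence proposition} or its source in \cite{DHP11}: those downward convergence results require quasi-continuity of the integrands, which $\mathbf{1}_AX_n$ does not enjoy for an arbitrary Borel set $A\subset\Omega_t$ (capacities are not continuous from above along general Borel sets).

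The repair is to abandon the general test sets: $A=\Omega$ suffices. Since you already have $\lim_n\phi_n\geq\phi$ $P$-a.s.\ with both sides $P$-integrable, it is enough to prove $E^P[\lim_n\phi_n]\leq E^P[\phi]$, i.e.\ $\lim_n\sup_{\bar P\in\mathcal{P}(t,P)}E^{\bar P}[X_n]=\sup_{\bar P\in\mathcal{P}(t,P)}E^{\bar P}[X]$. Here the integrands are the $X_n\in L_G^1(\Omega)$ themselves, which are quasi-continuous, and the fiber $\mathcal{P}(t,P)$ is weakly compact: it is weakly closed in the weakly compact set $\mathcal{P}$, because membership is tested by the equalities $E^{\bar P}[\varphi]=E^P[\varphi]$ against the bounded continuous functions $\varphi\in L_{ip}(\Omega_t)$. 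Hence the Denis--Hu--Peng argument behind Lemma \ref{downward convergence proposition}(ii) applies verbatim with $\mathcal{P}$ replaced by $\mathcal{P}(t,P)$, quasi-continuity with respect to the capacity of $\mathcal{P}$ implying quasi-continuity with respect to that of the smaller family. With this substitution your proof closes; as written, it stops exactly at the one step that carries all the content.
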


\end{document}